
\documentclass{aic}

\aicAUTHORdetails{%
  title = {Induced subgraphs and tree decompositions\\
  XIII. Basic obstructions in $\mathcal{H}$-free graphs for finite $\mathcal{H}$}, 
  author = {Bogdan Alecu, Maria Chudnovsky, Sepehr Hajebi, and Sophie Spirkl},
  plaintextauthor = {Bogdan Alecu, Maria Chudnovsky, Sepehr Hajebi, Sophie Spirkl},
    %
    %
  plaintexttitle = {Induced subgraphs and tree decompositions XIII. Basic obstructions in H-free graphs for finite H}, 
    %
  runningtitle = {Induced subgraphs and tree decompositions XIII.}, 
    %
    %
  copyrightauthor = {B. Alecu, M. Chudnovsky, S. Hajebi and S. Spirkl},
   %
  keywords = {Tree decompositions, Treewidth, Induced subgraphs, graph minors.},
}   

\aicEDITORdetails{%
   year={2024},
   number={6},
   received={26 November 2023},   
   published={29 November 2024},  
   doi={10.19086/aic.2024.6},      
}   


\usepackage[shortlabels]{enumitem}
\usepackage{mathdots}
\usepackage{dsfont}

\newtheorem{theorem}{Theorem}[section]
\newtheorem{lemma}[theorem]{Lemma}

\newtheorem{corollary}[theorem]{Corollary}
\newtheorem{question}[theorem]{Question}
\newtheorem{observation}[theorem]{Observation}

\DeclareMathOperator{\tw}{tw}


\def\dd{\hbox{-}}   

\newcommand{\mf}{\mathfrak}
\newcommand{\mca}{\mathcal}
\newcommand{\poi}{\mathbb{N}}

\newcounter{tbox}
\newcommand{\sta}[1]{\medskip\medskip\refstepcounter{tbox}\noindent{\parbox{\textwidth}{(\thetbox) \emph{#1}}}\vspace*{0.3cm}}
\newcommand{\mylongtitle}[1]{%
  \ifodd\value{page}%
    \protect\parbox{0.97\linewidth}{#1}\hfill%
  \else%
    \hfill\protect\parbox{0.97\linewidth}{#1}%
  \fi%
}

\begin{document}

\begin{frontmatter}[classification=text]

\title{Induced subgraphs and tree decompositions\\
XIII. Basic obstructions in $\mathcal{H}$-free graphs for finite $\mathcal{H}$} 

\author[alecu]{Bogdan Alecu\thanks{Supported by DMS-EPSRC Grant EP/V002813/1.}}
\author[chudnov]{Maria Chudnovsky\thanks{Supported by NSF-EPSRC Grant DMS-2120644 and by AFOSR grant FA9550-22-1-0083.}}
\author[hajebi]{Sepehr Hajebi}
\author[spirkl]{Sophie Spirkl\thanks{We acknowledge the support of the Natural Sciences and Engineering Research Council of Canada (NSERC), [funding reference number RGPIN-2020-03912].
Cette recherche a \'et\'e financ\'ee par le Conseil de recherches en sciences naturelles et en g\'enie du Canada (CRSNG), [num\'ero de r\'ef\'erence RGPIN-2020-03912]. This project was funded in part by the Government of Ontario. This research was conducted while Spirkl was an Alfred P. Sloan Fellow.}}

\begin{abstract}
Unlike minors, the induced subgraph obstructions to bounded treewidth come in a large variety, including, for every $t\in \poi$, \textit{the $t$-basic obstructions}: the  graphs
$K_{t+1}$ and $K_{t,t}$, along with the subdivisions of the $t$-by-$t$ wall and their line graphs. But this list is far from complete. The simplest example of a ``non-basic'' obstruction is due to Pohoata and Davies (independently). For every $n \in \poi$, they construct certain graphs of treewidth $n$ and with no $3$-basic obstruction as an induced subgraph, which we call \textit{$n$-arrays}.

Let us say a graph class $\mathcal{G}$ is \textit{clean} if the only obstructions to bounded treewidth in $\mathcal{G}$ are in fact the basic ones. It follows that a full description of the induced subgraph obstructions to bounded treewidth is equivalent to a characterization of all families $\mathcal{H}$ of graphs for which the class of all $\mathcal{H}$-free graphs is clean (a graph $G$ is \textit{$\mathcal{H}$-free} if no induced subgraph of $G$ is isomorphic to any graph in $\mathcal{H}$). 

This remains elusive, but there is an immediate necessary condition: if $\mathcal{H}$-free graphs are clean, then there are only finitely many $n\in \poi$ such that there is an $n$-array which is $\mathcal{H}$-free. The above necessary condition is not sufficient in general. However, the situation turns out to be different if $\mathcal{H}$ is \textit{finite}: we prove that for every finite set $\mathcal{H}$ of graphs, the class of all $\mathcal{H}$-free graphs is clean if and only if there is no $\mathcal{H}$-free $n$-array except possibly for finitely many values of $n$.
\end{abstract}
\end{frontmatter}


\section{Introduction}\label{sec:intro}

The set of all positive integers is denoted by $\poi$, and for every integer $n$, we write $\poi_n$ for the set of all positive integers less than or equal to $n$ (so $\poi_n=\emptyset$ if $n\leq 0$). Graphs in this paper have finite vertex sets, no loops and no parallel edges. Let $G=(V(G),E(G))$ be a graph.  An \textit{induced subgraph} of $G$ is the graph $G\setminus X$ for some $X\subseteq V(G)$, that is, the graph obtained from $G$ by removing the vertices in $X$. For $X\subseteq V(G)$, we use both $X$ and $G[X]$ to denote the subgraph of $G$ induced on $X$, which is the same as $G\setminus (V(G)\setminus X)$. We say $G$ \textit{contains} a graph $H$ if $H$ is isomorphic to an induced subgraph of $G$; otherwise, we say $G$ is \textit{$H$-free}. We also say $G$ is \textit{$\mathcal{H}$-free} for a family $\mathcal{H}$ of graphs if $G$ is $H$-free for all $H\in \mathcal{H}$.

The \textit{treewidth} of a graph $G$ (denoted by $\tw(G)$) is the smallest $w\in \poi$ for which $G$ can be represented as (a subgraph of) an intersection graph of subtrees of a tree, such that each vertex of the underlying tree appears in at most $w+1$ subtrees. For instance, the ``Helly property of subtrees'' \cite{helly} implies that for every $t\in \poi$, the complete graph $K_{t+1}$ and the complete bipartite graph $K_{t,t}$ both have treewidth $t$.

There are also sparse graphs with arbitrarily large treewidth, the most well-known example of which is the \textit{hexagonal grid} (see Figure~\ref{fig:hexwall}). For every $t\in \poi$, the $t$-by-$t$ hexagonal grid, also called the \textit{$t$-by-$t$ wall}, denoted $W_{t\times t}$, has treewidth $t$ \cite{GMV}, and the same remains true for all subdivisions of $W_{t\times t}$, as well. Indeed,  Robertson and Seymour proved in 1986 \cite{GMV} that containing a hexagonal grid as a minor (or a subdivided one as a subgraph) is qualitatively the only reason why a graph may have large treewidth:

\begin{figure}[t!]
    \centering
    \includegraphics[scale=0.6]{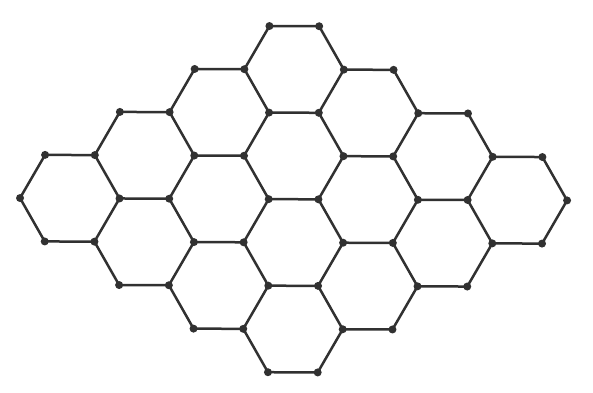}
    \caption{The $5$-by-$5$ hexagonal grid $W_{5\times 5}$.}
    \label{fig:hexwall}
\end{figure}

\begin{theorem}[Robertson and Seymour \cite{GMV}]\label{wallminor}
For every $t\in \poi$,
every graph of sufficiently large treewidth contains
a subdivision of $W_{t\times t}$ as a subgraph.
\end{theorem}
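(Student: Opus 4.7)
The plan is to reduce to a minor statement and then attack the minor version. Since $W_{t \times t}$ has maximum degree $3$, the standard observation that every minor of a graph of maximum degree at most $3$ is in fact a topological minor allows us to replace ``subdivision as a subgraph'' by ``minor'' in the conclusion: it suffices to show that graphs of sufficiently large treewidth contain $W_{t \times t}$, or equivalently a sufficiently large square grid, as a minor. Once we have the minor, pruning each branch set down to a tree and contracting redundant edges delivers the subgraph subdivision.

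For this minor version I would proceed via brambles. By the Seymour--Thomas duality theorem, if $\tw(G) \geq k$ then $G$ admits a bramble (a collection of pairwise touching connected subgraphs) of order $k+1$, meaning that no set of $k$ vertices meets every member of the family. From such a bramble of sufficiently large order the next step is to extract a large \emph{well-linked} set: a vertex set $A$ such that for every partition $A = A_1 \cup A_2$ into halves one can route $|A_1|$ pairwise vertex-disjoint paths between $A_1$ and $A_2$. Such a set can be produced by successive halving, using the bramble at each stage to guarantee the required routability and to ensure that we only lose a constant fraction of $A$ at each step.

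The main step, and the main obstacle, is to convert a well-linked set into a grid minor. The strategy is to iteratively build a long \emph{comb} -- a path carrying many pairwise disjoint pendant paths attached in a prescribed linear order -- and then stack several mutually orthogonal combs to realise the rows and columns of a grid. Each newly routed row must be trimmed via a Ramsey- or pigeonhole-type argument so as to retain both the disjointness of the attachments and their consistent linear ordering along the previously built skeleton. The delicate point is ensuring that the paths used to build later rows do not destroy earlier ones; it is precisely this global control that forces the notoriously large bounds in this theorem, and it is where any genuine attempt at a proof has to do real work.
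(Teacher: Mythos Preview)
The paper does not prove this theorem at all: it is quoted as a background result of Robertson and Seymour \cite{GMV} and used only as motivation in the introduction. There is therefore no ``paper's own proof'' to compare your proposal against.

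As for the proposal itself, it is a reasonable high-level outline of one standard route to the Grid Minor Theorem (reduce to minors via the maximum-degree-$3$ observation, pass to brambles by Seymour--Thomas duality, extract a well-linked set, and assemble a grid from linked path systems). But it is explicitly only a plan: you yourself flag the final step, controlling the interaction between successively routed rows, as ``where any genuine attempt at a proof has to do real work,'' and you do not supply that work. So as written this is an outline rather than a proof, and in any case the paper expects you simply to cite the result rather than reprove it.
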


For induced subgraphs, however, excluding just the walls is not enough to guarantee bounded treewidth: note that complete graphs, complete bipartite graphs and subdivided walls are three pairwise ``independent'' types of graphs with arbitrarily large treewidth, in the sense that no graph from one type contains an induced subgraph of another type with large treewidth. There is also a fourth example, namely the line-graphs of subdivided walls, where the {\em line-graph} $L(F)$ of a graph $F$ is the graph with vertex set $E(F)$, such that two vertices of $L(F)$ are adjacent if the corresponding edges of $F$ share an end. 

It is useful to group all these graphs together: given $t\in \poi$, we say a graph $H$ is a \textit{$t$-basic obstruction} if $G$ is isomorphic to one the following: the complete graph $K_{t+1}$, the complete bipartite graph $K_{t,t}$, a subdivision of $W_{t\times t}$, or the line-graph of a subdivision of $W_{t\times t}$  (see Figure~\ref{fig:3basic}). For every $t\in \poi$, every $t$-basic obstruction has treewidth $t$, and each induced subgraph of large treewidth in a basic obstruction of a given type contains a basic obstruction of the same type and of (relatively) large treewidth. Let us say a graph $G$ is \textit{$t$-clean} if $G$ contains no $t$-basic obstruction. It follows that every graph of treewidth less than $t$ is $t$-clean.

\begin{figure}[t!]
\centering
\includegraphics[scale=0.6]{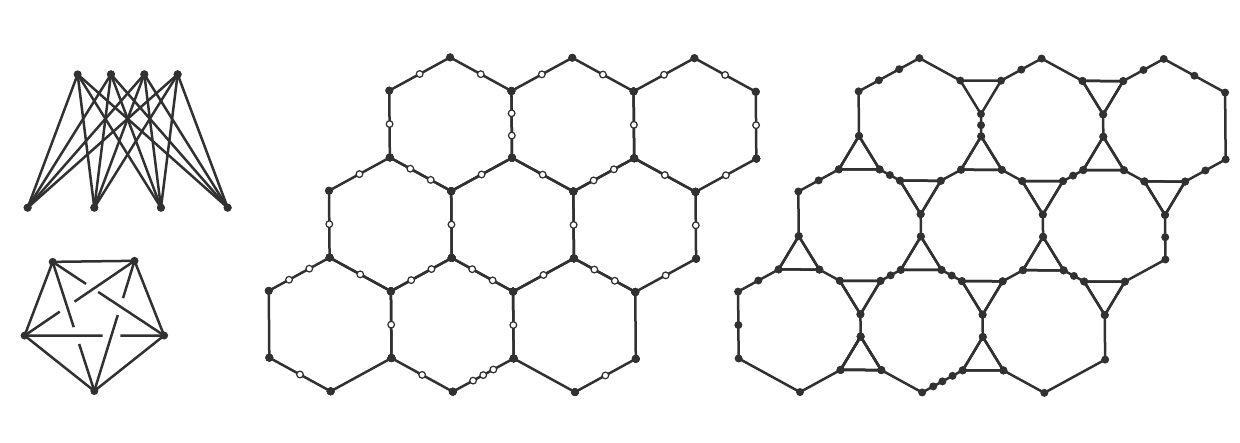}
\caption{The $4$-basic obstructions, including a subdivision of $W_{4\times 4}$ (middle) and its line-graph (right).}
\label{fig:3basic}
\end{figure}

One may hope for the basic obstructions to be the only induced subgraph obstructions to bounded treewidth. In other words, the neatest possible analog of Theorem~\ref{wallminor} for induced subgraphs would be the following: \textit{for every $t\in \poi$,  there is a constant $n=n(t)\in \poi$ such that every $t$-clean graph has treewidth less than $n$.} However, there are now several counterexamples to this statement \cite{deathstar, Davies2, Pohoata,layered-wheels}. The simplest construction is due to Pohoata and Davies (independently) \cite{ Davies2, Pohoata}, consisting of $3$-clean graphs of arbitrarily large treewidth. We call these graphs \textit{arrays}. 

Specifically, for $n\in \poi$, an $n$-array is a graph consisting of $n$ pairwise disjoint paths $L_1,\ldots, L_n$ with no edges between them as well as $n$ pairwise non-adjacent vertices $x_1,\ldots, x_n$, such that for every $i\in \poi_n$:
\begin{itemize}
    \item each of $x_1,\ldots, x_n$ has \textbf{at least} one neighbor in $L_i$; and
    \item for every $j\in \poi_{n-1}$, all neighbors of $x_j$ in $L_i$ appear before all neighbors of $x_{j+1}$ in $L_i$ (in particular, each vertex of $L_i$ is adjacent to at most one of $x_1, \dots, x_n$).
\end{itemize}
\begin{figure}[t!]
\centering
\includegraphics[scale=0.7]{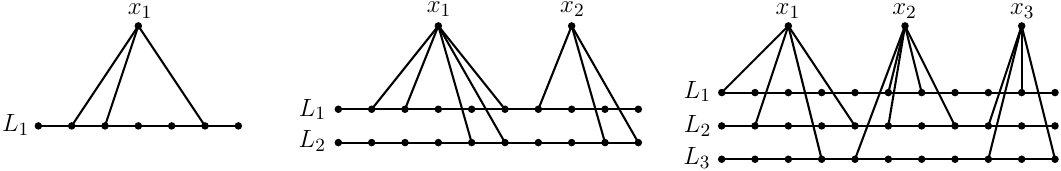}
\caption{Left to right: Examples of $n$-arrays for $n\in \{1,2,3\}$.}
\label{fig:davies}
\end{figure}
See Figure~\ref{fig:davies}. As mentioned earlier, arrays are $3$-clean graphs with unbounded treewidth:

\begin{theorem}[Pohoata \cite{Pohoata}, Davies \cite{Davies2}; see also Theorem 3.1 in \cite{twxii}]\label{thm:arrayproperties}
    For every $n\in \poi$, every $n$-array is a $3$-clean graph of treewidth at least $n$.
\end{theorem}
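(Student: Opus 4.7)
The statement has two parts: the treewidth lower bound and $3$-cleanness. For the treewidth bound, my plan is to exhibit $K_{n,n}$ as a minor of $G$ by contracting each path $L_i$ to a single vertex $\ell_i$. Since every $x_j$ has at least one neighbor in every $L_i$ while $\{x_1,\dots,x_n\}$ is independent and no edge joins distinct $L_i$'s, the contracted minor is exactly $K_{n,n}$; as $\tw(K_{n,n})=n$ and treewidth is minor-monotone, this yields $\tw(G)\geq n$.

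For $3$-cleanness, I will rule out each of the four $3$-basic obstructions in turn. The absence of $K_4$ follows because every non-$x$ vertex has degree at most $3$ in $G$, and three pairwise adjacent such vertices would need to lie inside a single induced path $L_i$, which is impossible. For $K_{3,3}$, if one side of a hypothetical bipartition sits inside $\{x_1,\dots,x_n\}$, then the opposite side contains vertices with three distinct $x$-neighbors, violating the ``at most one $x$-neighbor'' condition; otherwise each side contains a non-$x$ vertex $a\in L_i$ with $\deg(a)=3$, so $N(a)$ consists of its two path-neighbors $u,v$ and some $x_j$, and pushing the analysis to $u$ forces a pair of vertices at distance $3$ on $L_i$ to be adjacent, a contradiction.

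The wall case and its line-graph variant are more delicate, and I expect the former to be the main obstacle. The key structural fact I plan to use is that $G\setminus\{x_1,\dots,x_n\}$ is a forest, so every induced cycle of $G$ alternates between $x$-vertices and induced sub-paths of the $L_i$'s, while the ordering condition tightly restricts which sequences of $x$-vertices can appear on a single induced cycle. For a putative subdivision of $W_{3\times 3}$ in $G$, I would first classify the branch vertices --- each is either an $x$-vertex, or a degree-$3$ $L_i$-vertex whose three incident edges of the subdivision are forced to be its two path-neighbors together with its unique $x$-neighbor --- and then, tracing the three internally disjoint induced paths at any branch vertex, use the ordering of $N(x_j)\cap L_i$ to derive a contradiction from the presence of sufficiently many face cycles of $W_{3\times 3}$ meeting at a common branch vertex. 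The line-graph case I plan to reduce to a variant of the subdivision case by using that the only triangles of $G$ are of the form $\{x_j,u,v\}$ with $u,v$ consecutive on some $L_i$ and both adjacent to $x_j$: the triangle structure of the line-graph of a subdivided wall then forces branch-vertex configurations to which a similar ordering-based obstruction applies. The main hurdle is the subdivision-of-$W_{3\times 3}$ case itself, because the branch vertices may be distributed among $x$'s and $L_i$-vertices in many ways and the linking induced paths may traverse arbitrarily many $x$-vertices, so the argument must exploit the ordering condition globally rather than locally.
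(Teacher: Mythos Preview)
The paper does not prove this theorem; it is quoted with attribution to Pohoata and Davies, and the reader is referred to Theorem~3.1 of \cite{twxii} for a proof. There is therefore nothing in the present paper to compare your argument against.

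On its own merits: your treewidth argument (contract each $L_i$ to obtain a $K_{n,n}$ minor) is correct and complete, and your exclusions of $K_4$ and $K_{3,3}$ are essentially right. For $K_4$ the point is simply that the $x$'s form a stable set, so at least three vertices of a putative $K_4$ lie in $\bigcup_i L_i$ and would have to induce a triangle inside a single path; the degree bound you mention is incidental. For $K_{3,3}$, once neither side lies entirely in $\{x_1,\dots,x_n\}$, chasing forced neighbours as you indicate does produce two vertices at distance $3$ on one $L_i$ that would have to be adjacent, a contradiction.

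The genuine gap is exactly where you flag it: the exclusion of induced subdivisions of $W_{3\times 3}$ and of their line-graphs. What you have written there is a plan, not a proof. The structural facts you list --- that $G\setminus\{x_1,\dots,x_n\}$ is a forest, that every path-vertex has degree at most $3$, that the only triangles are $\{x_j,u,v\}$ with $u,v$ consecutive on some $L_i$, and that the blocks $N(x_j)\cap L_i$ occur in the same linear order along every $L_i$ --- are all correct and all relevant. But ``use the ordering to derive a contradiction from the face cycles meeting at a branch vertex'' hides the real work: the linking paths of a subdivided wall may pass through arbitrarily many $x$'s and $L_i$'s, and a local analysis at a single branch vertex does not obviously close. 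You will need some global argument that constrains how the induced cycles of $G$ can nest or intersect (the common ordering across all $L_i$'s is the right lever, but it has to be made to do something concrete); until that is supplied, the wall and line-graph cases remain open in your proposal.
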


This motivates the following definition: A graph class $\mathcal{G}$ is said to be \textit{clean} if the only induced subgraph obstructions to bounded treewidth in $\mathcal{G}$ are in fact the basic ones, that is, for every $t\in \poi$,  there is a constant $n=n(t)\in \poi$ such that every $t$-clean graph in $\mathcal{G}$ has treewidth less than $n$. An exact analog of Theorem~\ref{wallminor} for induced subgraphs is therefore equivalent to a characterization of all families $\mathcal{H}$ of graphs for which the class of all $\mathcal{H}$-free graphs is clean.

This appears to be out of reach of the current techniques, and even formulating a conjecture seems rather difficult. Nevertheless, the known ``non-basic'' obstructions may provide some insight into the structural properties of a family $\mathcal{H}$ with the above property. For example, from Theorem~\ref{thm:arrayproperties} combined with the definition of a clean class, we deduce that:

\begin{observation}\label{obs:necessary}
    Let $\mathcal{H}$ be a family of graphs such that the class of $\mathcal{H}$-free graphs is clean. Then there exists $n_0\in \poi$ such that for every $n\geq n_0$ and every $n$-array $A$, there is a graph $H\in \mathcal{H}$ which is isomorphic to an induced subgraph of $A$.  
\end{observation}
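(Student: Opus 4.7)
The plan is to derive the statement directly from Theorem~\ref{thm:arrayproperties} and the definition of a clean class, via a quick contrapositive argument.

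First, I would unpack the hypothesis. Since the class of $\mathcal{H}$-free graphs is clean, specializing the definition to $t=3$ yields a constant $n(3)\in \poi$ such that every $3$-clean $\mathcal{H}$-free graph has treewidth strictly less than $n(3)$. I would then set $n_0 := n(3)$ and claim this choice works.

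Next, fix any $n\geq n_0$ and any $n$-array $A$. Theorem~\ref{thm:arrayproperties} guarantees that $A$ is $3$-clean and has treewidth at least $n \geq n_0 = n(3)$. Consequently, $A$ cannot be $\mathcal{H}$-free, since that would contradict our choice of $n(3)$. Therefore $A$ contains some $H\in \mathcal{H}$ as an induced subgraph, as required.

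There is essentially no obstacle here: the observation is a one-line contrapositive, and its sole purpose is to repackage Theorem~\ref{thm:arrayproperties} as a necessary condition on any family $\mathcal{H}$ for which the $\mathcal{H}$-free graphs form a clean class. The only subtlety worth noting is the quantifier order in the conclusion — a single constant $n_0$ must work uniformly for all $n$-arrays with $n\geq n_0$ — but this is automatic, since the cleanness hypothesis already supplies one uniform constant $n(3)$, independent of the particular $3$-clean $\mathcal{H}$-free graph under consideration.
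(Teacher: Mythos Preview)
Your proposal is correct and matches the paper's approach exactly: the paper simply states that the observation follows from Theorem~\ref{thm:arrayproperties} combined with the definition of a clean class, and your argument is precisely that unpacking.
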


The converse to Observation~\ref{obs:necessary} is not true in general; indeed, all other non-basic obstructions discovered so far \cite{deathstar,layered-wheels} are counterexamples to this converse:
\begin{itemize}
 \item Let $\mathcal{H}$ be the family of all graphs which are the disjoint unions of two cycles. Then every $n$-array with $n\geq 4$ contains a graph in $\mathcal{H}$, while it is proved in \cite{deathstar} that there are $3$-clean $\mathcal{H}$-free graphs of arbitrarily large treewidth. 
  \item Let $\mathcal{H}$ be the family of all subdivisions of $K_{2,3}$, also known as the \textit{thetas}. Then it is readily observed that every $n$-array with $n\geq 3$ contains a theta, whereas it is proved in \cite{layered-wheels} that the class of theta-free graphs is not clean.
\end{itemize}

It turns out that what allows these counterexamples to occur is the fact that the corresponding family $\mathcal{H}$ of graphs that we are forbidding is infinite. Our main result in this paper shows for a \textit{finite} set $\mathcal{H}$ of graphs, the necessary condition from Observation~\ref{obs:necessary} is in fact sufficient:

\begin{theorem}\label{thm:maincute}
    Let $\mathcal{H}$ be a finite set of graphs. Then the class of all $\mathcal{H}$-free graphs is clean if and only if there are only finitely many $n\in \poi$ for which there is an $\mathcal{H}$-free $n$-array.
\end{theorem}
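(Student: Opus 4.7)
The forward implication is exactly Observation~\ref{obs:necessary}. For the converse, suppose $\mathcal{H}$ is finite and fix $N_0\in\poi$ such that every $n$-array with $n\geq N_0$ contains some member of $\mathcal{H}$ as an induced subgraph. We must show that for every $t\in\poi$, every $t$-clean $\mathcal{H}$-free graph has bounded treewidth.

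Fixing $t\in\poi$, the plan is to argue by contradiction. Suppose $G$ is a $t$-clean, $\mathcal{H}$-free graph of very large treewidth. We aim to locate an induced $N_0$-array inside $G$; since induced subgraphs of $\mathcal{H}$-free graphs are $\mathcal{H}$-free, such an array would itself be $\mathcal{H}$-free, contradicting the choice of $N_0$. By Theorem~\ref{wallminor}, the very large treewidth of $G$ forces the presence of a subdivision of $W_{k\times k}$ as a (not necessarily induced) subgraph for arbitrarily large $k$. The $t$-cleanness of $G$ prevents this wall from appearing as an induced subgraph, so many chords must appear, either inside the wall or from outside it. A Ramsey-type refinement of the chord pattern, drawing on tools from earlier papers of the series, would then let us pass to a highly structured induced substructure that we can recognise as an $N_0$-array, with $t$-cleanness ruling out the other three types of $t$-basic obstructions along the way.

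The main obstacle is precisely this array-extraction step. In full generality, $t$-clean graphs of large treewidth need not contain large induced arrays, as witnessed by the layered wheels of \cite{layered-wheels} and the deathstar construction of \cite{deathstar}. The finiteness of $\mathcal{H}$ is what rescues us. Each of these non-array obstructions avoids only an \emph{infinite} forbidden family (thetas, or disjoint unions of two cycles), but as its size parameter grows it becomes richer and richer in induced subgraphs; hence any finite $\mathcal{H}$ blocking all sufficiently large arrays should eventually be hit by such an obstruction as well. The technically demanding part of the proof will therefore be to formalise this intuition into an extraction lemma that treats every conceivable non-array obstruction uniformly, and then to verify that the resulting structure inside $G$ satisfies the precise definition of an $n$-array (correct adjacency order of the $x_i$'s along each path $L_j$, no extra edges between paths, and so on), rather than merely some coarse approximation to one.
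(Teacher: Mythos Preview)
Your forward implication is correct: it is Observation~\ref{obs:necessary}.

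Your plan for the converse has a genuine gap. You propose, given a $t$-clean $\mathcal{H}$-free graph $G$ of huge treewidth, to locate an \emph{induced} $N_0$-array in $G$ and derive a contradiction. But an $n$-array is an extremely rigid object: the paths $L_1,\ldots,L_n$ must be pairwise anticomplete, the $x_i$ must be a stable set, and for each path the neighbours of $x_1,\ldots,x_n$ must occur in that precise linear order. Nothing in your outline explains how $\mathcal{H}$-freeness, or the finiteness of $\mathcal{H}$, would help you enforce any of these constraints. Your paragraph about layered wheels and deathstars correctly identifies that $t$-clean graphs of large treewidth need not contain large induced arrays, but then the proposed ``rescue'' is only an intuition (``any finite $\mathcal{H}$ blocking all sufficiently large arrays should eventually be hit by such an obstruction as well''); you have not named a mechanism, and in fact this intuition is pointing in the wrong direction: you need the obstruction inside $G$ to hit $\mathcal{H}$, not the other way around.

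The paper proceeds quite differently, and the difference is the main idea. Rather than trying to find an array in $G$, it first asks what the hypothesis ``every large array contains some $H\in\mathcal{H}$'' says about $\mathcal{H}$ itself. The answer (Theorem~\ref{thm:onlyif}) is that $\mathcal{H}$ is \emph{tasselled}: for some $c$, every $c$-tassel (a single ``column'' of an array) already contains every component of some $H\in\mathcal{H}$. This is where finiteness of $\mathcal{H}$ is used. The paper then relaxes tassels to \emph{$c$-hassles}, where paths become $c$-stretched walks, arbitrary edges between walks are allowed, and the neck's adjacency need not be uniform; for finite $\mathcal{H}$, tasselled implies hassled (Theorem~\ref{thm:tassellediffhassled}). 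The point of this relaxation is that a $c$-hassle is loose enough that one \emph{can} prove every $t$-clean graph of large treewidth contains one (Theorem~\ref{mainloaded}); this is the technical heart of the paper and uses strong blocks, a result of Dvo\v{r}\'{a}k, and a web-dichotomy from~\cite{pinned}. Finally, since $\mathcal{H}$ is hassled, many disjoint hassles in $G$ each carry all components of some $H\in\mathcal{H}$, and a Ramsey step (Lemma~\ref{ramsey2}) assembles pairwise anticomplete copies of the components into an induced copy of $H$ itself. So the target structure in $G$ is never an array; it is the far weaker hassle, and the array hypothesis is consumed entirely in proving the tasselled/hassled property of $\mathcal{H}$.
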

This may also be regarded as a natural strengthening of the main result from \cite{twvii}, which handles the special case where $\mathcal{H}$ is a singleton:

\begin{theorem}[Abrishami, Alecu, Chudnovsky, Hajebi and Spirkl \cite{twvii}]\label{tw7}
    Let $H$ be a graph. Then the class of all $H$-free graphs is clean if and only if every component of $H$ is a subdivided star, that is, a tree with at most one vertex of degree more than two.
\end{theorem}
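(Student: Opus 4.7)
For the necessity direction, I would argue by contrapositive: if some component $C$ of $H$ is not a subdivided star, I would produce, for each $n \in \poi$, a $3$-clean $H$-free graph of treewidth at least $n$. A non-subdivided-star component is either a graph containing a cycle or a tree with at least two vertices of degree at least three; in either case, I would build an $n$-array avoiding $H$, which is automatically $3$-clean of treewidth at least $n$ by Theorem \ref{thm:arrayproperties}. If $C$ contains a cycle of length $k$, I would take \emph{sparse} $n$-arrays where each $x_j$ has a single neighbor in each $L_i$, with neighbors placed far apart along each $L_i$; such arrays have girth exceeding $k$ and so contain no induced copy of $C$. If $C$ is a tree with at least two branch vertices, I would instead take \emph{clustered} $n$-arrays where each $x_j$ has a long consecutive run of neighbors in each $L_i$, with the runs well separated; a direct case analysis of where the two branch vertices of $C$ could be mapped (both inside a single cluster, across two clusters of the same $L_i$, etc.) shows that $C$ does not embed once the cluster separation is large compared to the diameter of $C$.

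For the sufficiency direction, I would assume every component of $H$ is a subdivided star, write $H = S_1 \sqcup \cdots \sqcup S_r$, fix $t \in \poi$, and aim to bound the treewidth of $t$-clean $H$-free graphs. The plan is to argue by contradiction: if a $t$-clean $H$-free graph $G$ has large enough treewidth, then I should be able to locate a copy of $H$ as an induced subgraph, contradicting $H$-freeness. The central step is a structural lemma asserting that in any $t$-clean graph of sufficiently large treewidth one can find any prescribed forest of subdivided stars as an induced subgraph. Given this lemma, the desired bound is immediate. The lemma itself I would establish iteratively by peeling off one subdivided star at a time: locate a copy of $S_1$ inside a ``small'' region of $G$, then pass to a still-large-treewidth induced subgraph that avoids this region, and repeat for $S_2, \ldots, S_r$, inflating the treewidth threshold appropriately at each step.

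The hard part will be proving the one-star version of this structural lemma: that a $t$-clean graph of sufficiently large treewidth must contain every fixed subdivided star as an induced subgraph. This is delicate because the most direct sources of subdivided stars --- large induced subdivisions of walls and their line-graphs --- are excluded by the $t$-cleanness hypothesis, so the subdivided star must be extracted from the large treewidth more subtly. I expect the argument to proceed by a Ramsey-style analysis of an optimal tree decomposition of $G$: large treewidth forces the existence of a ``well-linked'' vertex with many pairwise internally-disjoint induced paths to elsewhere in $G$, and in the absence of the basic obstructions one can clean these paths to make them pairwise anticomplete outside a small core, thereby assembling them into the desired subdivided star. Carrying this structural step out carefully is the technical core of \cite{twvii} and accounts for essentially all the work.
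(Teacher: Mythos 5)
A preliminary remark: this theorem is quoted from \cite{twvii}; the present paper does not prove it, so your attempt can only be compared with the statement itself and with the machinery developed here. Your necessity direction is essentially right and is the standard argument: sparse large-girth arrays exclude components with cycles, and clustered arrays with long runs and large inter-run gaps exclude trees with two branch vertices (the two key observations being that in the clustered case a path-vertex adjacent to some $x_j$ cannot be a branch vertex of an induced tree without creating a triangle, so all branch vertices of induced trees are among the $x_j$'s, and any two of those are joined inside an induced tree only by a path of length at least the gap). This is exactly the role arrays play in Observation~\ref{obs:necessary}.

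The sufficiency direction, however, has two genuine gaps. First, your ``peel off one star at a time'' assembly produces copies of $S_1,\dots,S_r$ that are pairwise \emph{disjoint} but not pairwise \emph{anticomplete}, and an induced copy of the disconnected graph $H$ requires anticompleteness; deleting the copy of $S_1$ together with its neighbourhood is not an option, since that neighbourhood may be large and its removal may collapse the treewidth. The repair used in the proof of Theorem~\ref{mainif} is to find \emph{many} disjoint copies and then extract pairwise anticomplete ones by a Ramsey-type argument exploiting $(K_{t+1},K_{t,t})$-freeness (Lemma~\ref{ramsey2}); nothing in your sketch plays that role. Second, and more seriously, the single-star lemma --- which, as you say, carries all the content --- is only gestured at. The passage from ``many pairwise internally disjoint paths from a well-linked vertex'' to ``an induced subdivided star'' is precisely where the difficulty lives: the paths can interact arbitrarily, and taming that interaction is what Sections~\ref{sec:model} and~\ref{sec:turbinconst} here (or the bulk of \cite{twvii}) are devoted to, so asserting it does not constitute a proof. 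Note also that the present paper offers a cleaner route to sufficiency: if every component of $H$ is a subdivided star, then $\{H\}$ is tasselled with $c=|V(H)|$ (the neck of a $c$-tassel together with the initial segment of each of its paths up to the first neighbour of the neck is an induced subdivided star with at least $c$ legs, each of length at least $c$, which can be pruned to any prescribed smaller subdivided star), and Theorem~\ref{mainiftassel} then yields cleanness directly.
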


\subsection{Tassels and tasselled families}
Note that by Observation~\ref{obs:necessary}, we only need to prove the ``if'' implication of Theorem~\ref{thm:maincute}. To that end, we find it technically most convenient to reformulate the  ``if'' implication in terms of ``the columns of the arrays,'' which we refer to as ``tassels.''

Let us make this precise. A \textit{strand} is a graph $F$ obtained from a path $P$ by adding a new vertex $x$ with at least one neighbor in $P$, and we say $F$ is a \textit{$c$-strand}, for $c\in \poi$, if $x$ is not adjacent to the first and last $c$ vertices of $P$. We call $x$ and $P$ the \textit{neck of $F$} and the \textit{path of $F$}, respectively. For $c\in \poi$, by a \textit{$c$-tassel} we mean a graph $T$ obtained from at least $c$ pairwise disjoint copies of a $c$-strand $F$ by identifying their necks into a single vertex, called the \textit{neck of $T$}. We also refer to each copy of $F$ in $T$ as a \textit{strand of $T$}, and to the path of each copy of $F$  as a \textit{path of $T$} (see Figure~\ref{fig:tassels}).
\begin{figure}[t!]
\centering
\includegraphics[scale=0.6]{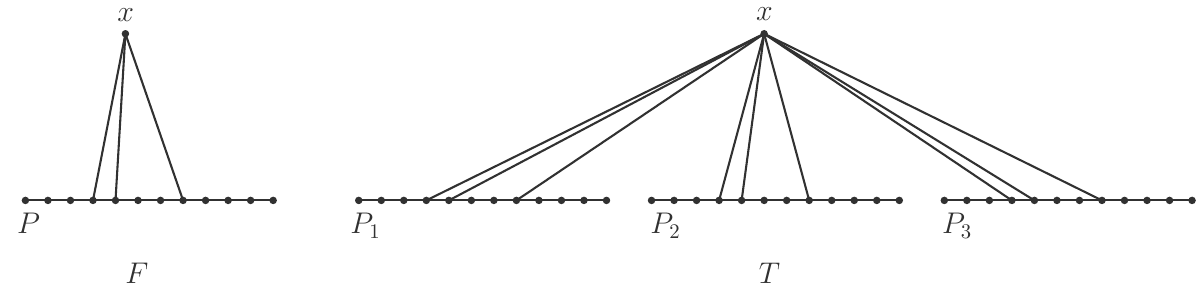}
\caption{Left: a $3$-strand $F$ with neck $x$ and path $P$. Right: a $3$-tassel $T$ with neck $x$ and paths $P_1,P_2,P_3$, obtained from three copies of $F$ as its strands.}
\label{fig:tassels}
\end{figure}

We say that a family $\mathcal{H}$ of graphs is \textit{tasselled} if  there is a constant $c=c(\mathcal{H})\in \poi$ with the property that
for every $c$-tassel $T$
there is  $H \in \mathcal{H}$ such that $T$  contains each component of
$H$. Also, if  $\mathcal{H}$ is finite, we define $||\mathcal{H}||=\sum_{H\in \mathcal{H}}|V(H)|$. It follows that:  

\begin{theorem}\label{thm:onlyif}
      Let $\mathcal{H}$ be a finite set of graphs such that there is no $\mathcal{H}$-free $n$-array except possibly for finitely many $n\in \poi$. Then $\mathcal{H}$ is tasselled.
\end{theorem}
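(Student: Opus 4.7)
The plan is to prove the contrapositive of Theorem~\ref{thm:onlyif}: assuming $\mathcal{H}$ is not tasselled, I would construct $\mathcal{H}$-free $n$-arrays for arbitrarily large $n$.

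First I would reduce to a cleaner hypothesis via pigeonhole. Let $N=\max_{H\in \mathcal{H}}|V(H)|$. The hypothesis provides, for every $c\in \poi$, a $c$-tassel $T_c$ such that for every $H\in \mathcal{H}$, some component of $H$ is missed by $T_c$. Since every $c$-tassel contains a path on $2c+1$ vertices as an induced subgraph (the full path of any one of its strands), once $c>N$ the missed component of each $H$ must be non-path. The tuple of missed non-path components ranges over a finite set, as $\mathcal{H}$ is finite, so by pigeonhole I can fix a single family $\mathcal{C}=\{C_H : H\in \mathcal{H}\}$ of non-path connected graphs such that a $\mathcal{C}$-free $c$-tassel exists for arbitrarily large $c$. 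Since $\mathcal{C}$-freeness implies $\mathcal{H}$-freeness, it suffices to produce $\mathcal{C}$-free $n$-arrays for arbitrarily large $n$.

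Next I would construct such arrays. Fix $c>N$ for which there exists a $\mathcal{C}$-free $c$-tassel $T$ with common strand $F=(x,P)$, where $P=p_1\cdots p_M$ and the neighbors of $x$ lie among $\{p_{c+1},\ldots,p_{M-c}\}$. Assume, by passing to an induced sub-tassel if needed, that $T$ has exactly $c$ strand paths. Take $c$ vertex-disjoint copies $T_1,\ldots,T_c$ of $T$, with neck $x_a$ and strand paths $P_{a,1},\ldots,P_{a,c}$ for $T_a$. For each $i\in \poi_c$, form $L_i$ by concatenating $P_{1,i},\ldots,P_{c,i}$ in order, adding a single edge between the last vertex of $P_{a,i}$ and the first vertex of $P_{a+1,i}$ for every $a$, and no further edges. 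A direct check confirms that the resulting graph $A$ is a $c$-array, with the tails at the endpoints of each strand path separating the neighborhoods of consecutive apices along each $L_i$.

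The heart of the argument is to show that $A$ is $\mathcal{C}$-free. Suppose, for contradiction, that some $C\in \mathcal{C}$ is induced on $V_C\subseteq V(A)$, so $|V_C|\leq N$. Partition $V(A)$ into blocks $Q_a=\{x_a\}\cup \bigcup_i V(P_{a,i})$, each inducing a copy of $T$, and observe that the only edges between $Q_a$ and $Q_{a+1}$ are the $c$ gluing edges, each joining two tail vertices. Using $N<c$, I would argue in three steps: first, $V_C$ cannot meet two non-consecutive blocks $Q_a,Q_b$ (with $b\geq a+2$), since any connecting path in $A[V_C]$ is forced to either traverse all of some intermediate strand path $P_{a',i}$ (at least $2c+1$ vertices) or detour through an apex $x_{a'}$ (length at least $2c+2$), far exceeding $N$; second, if $V_C$ meets both $Q_a$ and $Q_{a+1}$, then neither $x_a$ nor $x_{a+1}$ lies in $V_C$, because the distance in $A$ from any apex to any gluing edge is at least $c+2$; third, in this two-block case, $V_C$ consists only of path vertices from a single $L_i$, so $A[V_C]$ is a subpath of $L_i$, contradicting the fact that $C$ is not a path. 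The remaining case $V_C\subseteq Q_a$ contradicts the $\mathcal{C}$-freeness of $T$ directly. Since $c$ can be taken arbitrarily large, this yields $\mathcal{H}$-free $n$-arrays for infinitely many $n$, completing the contrapositive.

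The main obstacle is the three-step case analysis above, and in particular the distance estimates confining every small connected induced subgraph of $A$ either to a single tassel block or to a subpath of some $L_i$. These estimates depend crucially on the $c$-strand definition---the first and last $c$ vertices of each strand path being non-adjacent to the neck---and on the choice $c>N$, which makes these no-apex tails long enough to block every short detour between distinct tassel copies. The finiteness of $\mathcal{H}$ enters only in the pigeonhole reduction of the first step.
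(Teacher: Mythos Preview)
Your argument is correct and rests on the same construction as the paper: glue together copies of a $c$-tassel into a $c$-array and use the $c$-long tails at each end of every strand path to confine any connected non-path induced subgraph on at most $\|\mathcal H\|$ vertices to a single tassel copy. The paper proves the direct implication---given an arbitrary $c$-tassel $T$ with $c=\max\{n_0,\|\mathcal H\|\}$, build the array, invoke the hypothesis to find some $H\in\mathcal H$ inside it, and then show each component of $H$ already lives in $T$---whereas you prove the contrapositive, which forces the extra pigeonhole step fixing the family $\mathcal C$ of missed components; this step is harmless but unnecessary, and your three-step block analysis is simply a reorganisation of the paper's observation that the necks $x_1,\dots,x_d$ are pairwise at distance at least $2c+3$ in $A$.
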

\begin{proof}
By the assumption, there exists $n_0\in \poi$ such that there is no $\mathcal{H}$-free $n$-array for any $n\geq n_0$. Let $c=\max\{n_0,||\mathcal{H}||\}$. In order to prove that $\mathcal{H}$ is tasselled, we show that for every $c$-tassel $T$, there is a graph $H\in \mathcal{H}$ such that $T$ contains each component of $H$.

Let $d\geq c$ be the number of paths of $T$. Construct an $d$-array $A$ as follows. Start with $d$ pairwise disjoint copies $T_1,\ldots, T_d$ of $T$. For each $i\in \poi_d$, let $x_i$ be the neck of $T_i$ and fix an enumeration $P^i_1,\ldots, P^i_{d}$  of the paths of $T_i$. For every $i,j\in \poi_d$, fix a labelling  $u^i_j,v^i_j$ of the ends of $P^i_j$. Then, for every  $i\in \poi_{d-1}$ and every $j\in \poi_{d}$, add an edge between $u^{i}_j$ and $v^{i+1}_j$ (see Figure~\ref{fig:arraywithtassels}).
\begin{figure}[t!]
\centering
\includegraphics[scale=0.7]{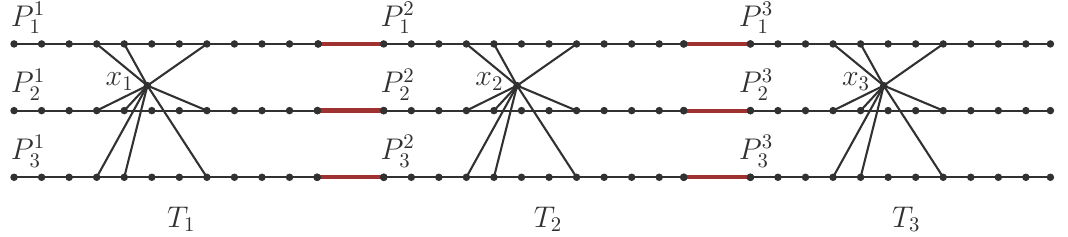}
\caption{The $3$-array $A$ as described in the proof of Theorem~\ref{thm:onlyif}, obtained from three copies $T_1,T_2,T_3$ of the $3$-tassel $T$ in Figure~\ref{fig:tassels}.}
\label{fig:arraywithtassels}
\end{figure}

Since $A$ is an $d$-array with $d\geq c\geq n_0$, it follows that $A$ is not $\mathcal{H}$-free, and so we may choose a graph $H\in \mathcal{H}$ which is contained in $A$.

Let $K$ be a component of $H$. Our goal is to show that $K$ is contained in $T$. This is immediate if $K$ is a path, because $|K|\leq ||\mathcal{H}||\leq c$ and $T$ is a $c$-tassel. Thus, we may assume that $K$ is not a path. Since $H$ is contained in $A$, it follows that some induced subgraph $K'$ of $A$ is isomorphic to $K$. In particular, $K'$ is a connected graph on at most $||\mathcal{H}||$ vertices which is not a path. Also, from the construction of $A$, it is readily observed that the necks $x_1,\ldots,x_d$ of $T_1,\ldots, T_d$ are pairwise at distance at least $2c+3>||\mathcal{H}||\geq |K'|$ in $G$. Consequently, there exists exactly one $i\in \poi_{d}$ for which $x_i$ belongs to $V(K')$.

Now, since $K'$ is connected, it follows that for every component $P$ of $K'\setminus \{x_i\}$, we have $P\subseteq V(A)\setminus \{x_1,\ldots, x_d\}$ and $x_i$ has a neighbor in $P$. This, along with the fact that $P$ is connected and $|P|<||\mathcal{H}||\leq c$, implies that $P\subseteq P^i_j\subseteq T_i\setminus \{x_i\}$ for some $j\in \poi_{d}$. In conclusion, we have shown that $K'\setminus \{x_i\}\subseteq T_i\setminus \{x_i\}$, and so $K'\subseteq T_i$. Hence, $K$ is contained in $T$, as desired.
\end{proof}
 In view of Theorem~\ref{thm:onlyif}, in order to prove Theorem~\ref{thm:maincute}, it suffices to show that:
 \begin{theorem}\label{mainiftassel}
     Let $\mathcal{H}$ be a finite set of graphs which is tasselled. Then the class of $\mathcal{H}$-free graphs is clean.
 \end{theorem}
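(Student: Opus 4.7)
My plan is to prove the contrapositive: assuming $\mathcal{H}$ is tasselled with constant $c = c(\mathcal{H})$, I will show that for every $t \in \poi$, every $t$-clean $\mathcal{H}$-free graph $G$ has treewidth below a threshold depending on $t$ and $\mathcal{H}$. Thus, toward a contradiction, I let $G$ be a $t$-clean $\mathcal{H}$-free graph of very large treewidth and aim to produce an induced copy of some $H \in \mathcal{H}$ in $G$.

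The strategy decomposes into three steps, of which the first is expected to be the main obstacle. It requires a structural lemma of roughly the following form: for all $t,c,k \in \poi$ there exists $N=N(t,c,k)$ such that every $t$-clean graph of treewidth at least $N$ contains, as induced subgraphs, $k$ pairwise vertex-disjoint $c$-tassels $T_1,\ldots,T_k$ with no edges between distinct $T_i$'s. A natural route is first to extract a single induced $c$-tassel from a $t$-clean graph of large treewidth (using the structural and cleaning machinery developed in earlier papers of the series, which relate large treewidth in clean graphs to the presence of grid-like or array-like configurations), and then to iterate the extraction inside well-separated subregions obtained from a balanced-separator argument, ensuring that at each step enough treewidth is preserved to find another tassel.

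Granted this lemma, I apply it with $k = |\mathcal{H}|\cdot m$, where $m = \max_{H \in \mathcal{H}} |V(H)|$ uniformly bounds the number of components of any $H \in \mathcal{H}$. By the tasselled property, each $T_i$ contains every component of some $H_i \in \mathcal{H}$; pigeonhole then supplies $m$ of these tassels, which after reindexing we call $T_1,\ldots,T_m$, all sharing a common $H \in \mathcal{H}$. Writing the component decomposition $H = K_1\cup\ldots\cup K_p$ with $p \leq m$, for each $j \in \poi_p$ we select an induced copy $K_j' \subseteq T_j$ of $K_j$.

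Finally, because $T_1,\ldots,T_p$ are pairwise vertex-disjoint with no edges between them in $G$, the copies $K_1',\ldots,K_p'$ are also pairwise vertex-disjoint and non-adjacent in $G$; hence their union is an induced subgraph of $G$ isomorphic to $H$, yielding the desired contradiction with $\mathcal{H}$-freeness. The hard part is thus the structural lemma in Step 1, where the subtlety lies not so much in exhibiting many tassels as in simultaneously guaranteeing their vertex-disjointness \emph{and} the absence of inter-tassel edges, which seems to require a careful cleaning of the tassels extracted from the large-treewidth structure of $G$.
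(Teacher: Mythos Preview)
Your assembly strategy in Steps~2 and~3 is essentially the paper's: obtain many pairwise disjoint witnesses, each containing every component of some $H_i\in\mathcal{H}$, pigeonhole to a common $H$, pass to a pairwise anticomplete subfamily, and read off one component of $H$ from each. The paper does the iteration slightly differently (remove a bounded vertex set and re-apply the structural result, using $\tw(G\setminus X)\geq \tw(G)-|X|$, rather than balanced separators) and invokes Lemma~\ref{ramsey2} to go from disjoint to pairwise anticomplete, but the endgame is the same.

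The genuine gap is your Step~1. You ask for an induced $c$-tassel in every $t$-clean graph of large treewidth and defer this to ``the machinery developed in earlier papers of the series.'' No such result is available there; the structural extraction is precisely the technical heart of the \emph{present} paper (Sections~\ref{sec:prilim}--\ref{sec:turbinconst}), and even so the paper does \emph{not} extract tassels. A $c$-tassel is rigid: the strands are identical copies of one $c$-strand, the paths are induced and pairwise anticomplete, and the neck has the same adjacency pattern to each. Controlling all of this at once is exactly the difficulty.

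The paper circumvents it by introducing a relaxation, the \emph{$c$-hassle}: $c$-stretched walks in place of paths, arbitrary edges permitted between walks, and no uniformity required on the neck's adjacencies. Theorem~\ref{mainloaded} shows every $t$-clean graph of large treewidth contains a $c$-hassle; this occupies four sections. Separately, Theorem~\ref{thm:tassellediffhassled} shows that for finite $\mathcal{H}$, tasselled is equivalent to \emph{hassled} (every $(K_{t+1},K_{t,t})$-free $c$-hassle contains all components of some $H\in\mathcal{H}$); the nontrivial direction, tasselled $\Rightarrow$ hassled, is where the uniformity built into tassels is actually exploited, via a Ramsey argument on adjacency patterns. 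Theorem~\ref{mainiftassel} is then immediate from these two theorems together with the assembly you describe (this is Theorem~\ref{mainif}).

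So your reduction has the right shape, but the lemma you posit is unproved and in fact stronger than what the paper establishes directly. The paper's key idea is to weaken the target structure from tassels to hassles (making the extraction feasible) and to compensate by upgrading the hypothesis on $\mathcal{H}$ from tasselled to hassled, which turns out to be free for finite $\mathcal{H}$. One could likely recover your tassel lemma \emph{a posteriori} from Theorem~\ref{mainloaded} plus pigeonhole on neck-adjacency patterns plus Lemma~\ref{ramsey2}, but that detour passes through the paper's main theorem rather than replacing it.
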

 The rest of the paper is devoted to the proof of Theorem~\ref{mainiftassel}, which is completed in Section~\ref{sec:end}. Note also that Theorems~\ref{thm:onlyif} and \ref{mainiftassel} combined with Observation~\ref{obs:necessary} imply the following (see also Figure~\ref{fig:equivalence}).
\begin{corollary}\label{cor:equivalence}
    The following are equivalent for every finite set $\mathcal{H}$ of graphs.
    \begin{itemize}
        \item There is no $\mathcal{H}$-free $n$-array except possibly for finitely many $n\in \poi$.
        \item $\mathcal{H}$ is tasselled.
         \item The class of all $\mathcal{H}$-free graphs is clean.
    \end{itemize}
\end{corollary}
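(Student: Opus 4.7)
The plan is to establish the stated three-way equivalence by closing a directed cycle of implications $(1) \Rightarrow (2) \Rightarrow (3) \Rightarrow (1)$, each of which is supplied verbatim by a result already present in the excerpt. Thus the corollary is essentially a packaging statement, and the substantive mathematical content is entirely absorbed into the auxiliary results that it cites.

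Concretely, for the implication $(1) \Rightarrow (2)$, I would quote Theorem \ref{thm:onlyif}: the hypothesis that there exists $n_0$ with no $\mathcal{H}$-free $n$-array for $n \geq n_0$ is precisely the premise of that theorem, and its conclusion is that the finite set $\mathcal{H}$ is tasselled. For $(2) \Rightarrow (3)$, I would cite Theorem \ref{mainiftassel}, the main technical result of the paper, which asserts directly that any tasselled finite family $\mathcal{H}$ makes the class of $\mathcal{H}$-free graphs clean. For $(3) \Rightarrow (1)$, I would appeal to Observation \ref{obs:necessary}: if the class of $\mathcal{H}$-free graphs is clean, then there exists $n_0$ such that for every $n \geq n_0$, every $n$-array contains a member of $\mathcal{H}$ as an induced subgraph, which is exactly the statement that $\mathcal{H}$-free $n$-arrays can exist only for $n < n_0$.

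I do not anticipate any genuine obstacle, since the cycle closes immediately once the three cited results are in hand. The only subtlety worth flagging in the write-up is that the finiteness hypothesis on $\mathcal{H}$ is used through Theorems \ref{thm:onlyif} and \ref{mainiftassel} (for the implications $(1) \Rightarrow (2)$ and $(2) \Rightarrow (3)$), while Observation \ref{obs:necessary} — and hence the implication $(3) \Rightarrow (1)$ — holds for arbitrary $\mathcal{H}$, so the finiteness assumption is placed in the hypothesis of the corollary purely to enable the other two arrows.
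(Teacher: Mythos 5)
Your proposal is correct and matches the paper's proof exactly: the paper likewise derives the corollary by combining Theorem~\ref{thm:onlyif} for $(1)\Rightarrow(2)$, Theorem~\ref{mainiftassel} for $(2)\Rightarrow(3)$, and Observation~\ref{obs:necessary} for $(3)\Rightarrow(1)$. Your remark about where finiteness is used is accurate and consistent with the paper.
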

\begin{figure}[t!]
    \centering
    \includegraphics[scale=0.8]{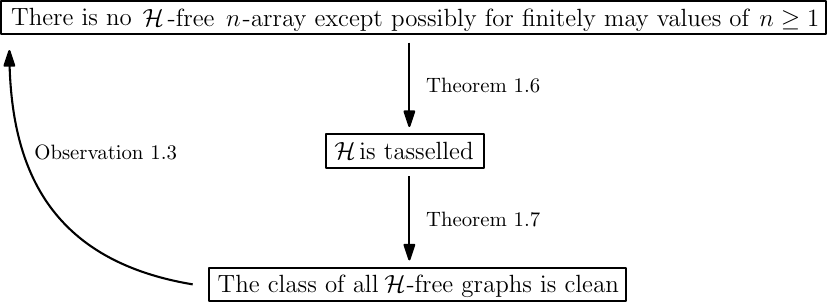}
    \caption{Corollary~\ref{cor:equivalence}}
    \label{fig:equivalence}
\end{figure}

\subsection{Outline}

To prove the Theorem~\ref{mainiftassel}, we proceed as follows. First, in Section \ref{sec:goodtoexcellent}, we show that a finite family is tasselled if and only if it is ``hassled,'' which is described by containment not in a $c$-tassel, but in a less restricted $c$-hassle: while similar to a tassel, the paths are replaced by walks such that any short stretch is an induced path, the adjacency from the neck to the walks need not be the same, and we may have edges between walks.  

Then, our goal is to show that $t$-clean graphs of large treewidth contain $c$-hassles. To do so, we start with some preliminary Ramsey-type results in Section \ref{sec:prilim}. In Section \ref{sec:blocks}, we show that in ``blocks,'' which are sets of vertices with pairwise many paths between them, we can arrange that those paths are long. This is useful for getting many long induced paths, which will form part of our $c$-hassles. 

In Section \ref{sec:model}, we get a structure of large treewidth consisting of one of the following: 
\begin{itemize}
    \item A large set of paths, and a large set of vertices, each with a neighbor in every path; or
    \item Two large sets of paths such that each path in one set has an edge to each path in the other, but no vertex has neighbors in many paths. 
\end{itemize}
In Section \ref{sec:turbinconst}, we show how each of these configurations leads to a $c$-hassle. In the former case, this is almost immediate, whereas the latter case is more involved: we find a block whose vertex set is contained in the union of the paths, and leverage the interaction between the two structures, along with the fact that blocks are long, to find our $c$-hassle.  

In Section \ref{sec:end}, we put everything together and prove our main result. 

\section{Hassled families}\label{sec:goodtoexcellent}

In this section, we introduce the notion of a ``hassled family'' as a
variant of tasselled families, and show that for finite families of graphs, these two properties are in fact equivalent. This in turn reduces Theorem~\ref{mainiftassel} to: \textit{for every finite and \textbf{hassled} family $\mathcal{H}$ of graphs, the class of all $\mathcal{H}$-free graphs is clean.} The remainder of this paper is then occupied with a proof of the latter statement, which will appear as Theorem~\ref{mainif} in Section~\ref{sec:end}. In order to define hassled families, first we need another notion, that of a  ``$c$-hassle,'' which is similar to a $c$-tassel except its paths are replaced by walks that are ``locally'' isomorphic to a path, and, up to sparsity, there may be additional edges between these walks.

Let us define all this formally. Let $n\in \poi$ and let $P$ be an $n$-vertex path (as a graph). Then we write $P = p_1 \dd \cdots \dd p_n$ to mean that $V(P) = \{p_1, \dots, p_n\}$ and $p_i$ is adjacent to $p_j$ if and only if $|i-j| = 1$. We call the vertices $p_1$ and $p_n$ the \emph{ends of $P$}, and refer to $P\setminus \{p_1,p_n\}$ as the \emph{interior of $P$}, denoted $P^*$. For vertices $u,v\in V(P)$, we denote by $u\dd P\dd v$ the subpath of $P$ from $u$ to $v$. Recall that the \textit{length} of a path is the number of edges in it.

Let $G$ be a graph. A {\em path in $G$} is an induced subgraph of $G$ which is a path. For $X,Y\subseteq V(G)$. We say $X$ is \textit{complete to} $Y$ in $G$ if every vertex of $X$ is adjacent to every vertex in $Y$, and we say $X$ and $Y$ are \textit{anticomplete} in $G$ if there is no edge in $G$ with an end in $X$ and an end in $Y$. If $x\in V(G)$, we say $x$ is \textit{complete (anticomplete) to} $Y$ if $\{x\}$ and $Y$ are complete (anticomplete) in $G$.

For $n\in \poi\cup \{0\}$, by an \textit{$n$-segment} we mean a set $S$ of at most $n$ consecutive integers; in particular, $\poi_{n}$ is an $n$-segment. A graph $W$ is a \textit{walk} if there is $n_W\in \poi$ and a surjective map $\varphi_W:\poi_{n_W}\rightarrow V(W)$ such that for every $i\in \poi_{n_W-1}$, we have $\varphi_W(i)\varphi_W(i+1)\in E(W)$ (one may in fact observe that a graph $W$ is a walk if and only if it is connected). Given a walk $W$ along with choices of $n_W$ and $\phi_W$, for $c\in \poi$, we refer to $\varphi_W(\poi_{c})$ and $\varphi_W(\poi_{n_W}\setminus \poi_{n_W-c})$ as the \textit{first $c$ vertices of $W$} and the \textit{last $c$ vertices of $W$}, respectively. We also say  $W$ is \textit{$c$-stretched} if $\varphi_W(S)$ is a path in $W$ for every $c$-segment $S\subseteq \poi_{n_W}$ (see Figure~\ref{fig:Walk} -- intuitively, this means a snake of length $c-1$ traversing through $W$ can never see/hit itself).

We now define a \textit{$c$-hassle}, where $c\in \poi$, to be a graph $\Xi$ obtained from at least $c$ pairwise disjoint $c$-stretched walks, called the \textit{walks of $\Xi$}, by adding edges arbitrarily between the walks, and then adding a vertex $x$, called the \textit{neck of $\Xi$}, which has a neighbor in each walk and which is anticomplete to the first and last $c$ vertices of each walk (see Figure~\ref{fig:hassle}).

\begin{figure}[t!]
\centering
\includegraphics[scale=0.6]{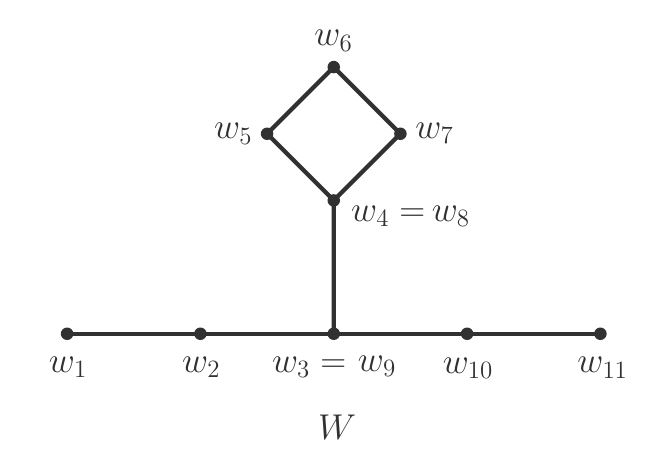}
\caption{A $3$-stretched walk $W$ with $n_W=11$ and $\varphi_W(i)=w_i$ for all $i\in \poi_{11}$.}
\label{fig:Walk}
\end{figure}

For a family $\mathcal{H}$ of graphs, we say $\mathcal{H}$ is \textit{hassled} if for every $t\in \poi$, there is $c=c(\mathcal{H},t)\in \poi$ with the property that for every $(K_{t+1},K_{t,t})$-free $c$-hassle $T$ 
there exists $H \in \mathcal{H}$ such that $T$ contains every component of $H$.
In particular, observe that every $c$-tassel is a $(K_4,K_{3,3})$-free $c$-hassle, and so every hassled family is tasselled. More importantly, for finite families, the converse is also true: 

\begin{theorem}\label{thm:tassellediffhassled}
Let $\mathcal{H}$ be a finite set of graphs. Then $\mathcal{H}$ is tasselled if and only if $\mathcal{H}$ is hassled. 
\end{theorem}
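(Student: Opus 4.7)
The plan is to prove the two directions separately. The implication ``hassled $\Rightarrow$ tasselled'' is immediate: every $c$-tassel is itself a $(K_4, K_{3,3})$-free $c$-hassle (its paths are trivially $c$-stretched walks, and the pairwise anticompleteness of distinct strands, together with the fact that they meet only at the single neck, precludes both $K_4$ and $K_{3,3}$), so the hassled hypothesis applied at $t=3$ produces a constant $c(\mathcal{H},3)$ witnessing the tasselled property.

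For the substantive direction ``tasselled $\Rightarrow$ hassled,'' let $c_0 = c(\mathcal{H})$ witness the tasselled property and fix $t\in\poi$. I plan to choose $c = c(t, c_0)$ sufficiently large (in terms of certain Ramsey numbers and a K\H{o}v\'ari--S\'os--Tur\'an constant) and, given any $(K_{t+1}, K_{t,t})$-free $c$-hassle $\Xi$ with neck $x$ and walk set $\mathcal{W}$, exhibit an induced $c_0$-tassel in $\Xi$ sharing the neck $x$; the conclusion then follows by applying tasselledness to this tassel. The first step is, for each walk $W\in\mathcal{W}$, to extract an induced subpath $P_W\subseteq W$ of length at most some $L = L(t,c_0)$ such that $(P_W, x)$ is a $c_0$-strand, i.e., $x$ has a neighbor in $P_W$ but none in its first or last $c_0$ vertices. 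Starting from the earliest position $\alpha_W$ in the traversal of $W$ at which $x$ has a neighbor, I would extend along the traversal until encountering a run of $c_0$ consecutive non-$N(x)$ positions; the $c$-stretched property then guarantees that the resulting window is an induced subpath of $W$. The main obstacle is bounding the length of this window uniformly across walks: walks in which $N(x)$ is too densely packed to admit a short such run must be handled by exploiting the forbidden $K_{t+1}$ and $K_{t,t}$ substructures to either rule out this ``dense'' regime or extract strands from it directly.

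Next, I would classify the resulting strands $(P_W, x)$ by isomorphism type; since $|V(P_W)|\leq L$, there are at most $2^L$ types, so by pigeonhole at least $|\mathcal{W}|/2^L$ walks share a common type $F$. Finally, to select $c_0$ pairwise anticomplete walks from this family, form the auxiliary graph $\Gamma$ whose edges record pairs of walks admitting at least one $\Xi$-edge between their chosen paths, and apply Ramsey's theorem: either $\Gamma$ contains an independent set of size $c_0$, yielding the desired $c_0$-tassel after joining the common-type strands at $x$, or $\Gamma$ contains a clique of size $K$ for $K$ large. The latter outcome is ruled out by the K\H{o}v\'ari--S\'os--Tur\'an bound applied to the $K_{t,t}$-free induced subgraph of $\Xi$ on $\bigcup P_W$, since the $\binom{K}{2}$ forced between-path edges would exceed the maximum $O((KL)^{2-1/t})$ once $K$ exceeds a polynomial in $L$ and $t$. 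The strand-extraction step is the crux of the argument.
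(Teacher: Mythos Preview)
Your ``if'' direction is correct and matches the paper. The ``only if'' direction, however, has a genuine gap precisely at the step you flag as the crux: you cannot bound the length $L$ of the extracted strand uniformly over all walks, and the forbidden $K_{t+1}$, $K_{t,t}$ do not help. Concretely, take a walk $W$ that is a path on $n_W$ vertices, with the neck $x$ adjacent to every vertex of $W$ except the first and last $c$; this configuration is $(K_{t+1},K_{t,t})$-free for every $t\geq 3$ (a single vertex plus a path contains neither a triangle on three path-vertices nor two path-vertices with three common neighbours). Any induced subpath of $W$ is a segment $[a,b]$, and for $(\,[a,b],x)$ to be a $c_0$-strand the two end-blocks of length $c_0$ must lie inside the non-neighbour zones $[1,c]$ and $[n_W-c+1,n_W]$ while the middle must meet $[c+1,n_W-c]$; this forces $b-a\geq n_W-2c$. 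Thus the shortest $c_0$-strand extracted from $W$ has length at least $n_W-2c+1$, which is unbounded, so your pigeonhole over at most $2^L$ isomorphism types collapses, and with it the subsequent Ramsey/K\H{o}v\'ari--S\'os--Tur\'an step (whose threshold on $K$ also depends on $L$).

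The paper avoids this obstacle by \emph{not} trying to locate a $c_0$-tassel inside $\Xi$. Instead, for each walk $W$ it builds an auxiliary abstract $c_0$-tassel $T_W$ (outside $\Xi$) by taking $c_0$ copies of a path on $n_W$ vertices and copying the adjacency pattern of $x$ along $W$ onto each copy; tasselledness applied to $T_W$ yields some $H_W\in\mathcal{H}$ whose components embed in $T_W$. A pigeonhole over the finite set $\mathcal{H}$ then fixes $H$, and the key point is that each component $K$ of $H$ has at most $\|\mathcal{H}\|$ vertices, so its image in $T_W$ lies in a short segment of one path. Pushing this back through $\varphi_W$ and using that $W$ is $\xi$-stretched (with $\xi\geq \|\mathcal{H}\|$) shows the image is an induced path in $W$, giving an embedding of each ``leg'' of $K$ into $\Xi$. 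A further Ramsey-type step (Lemmas~\ref{lem:multitassel}--\ref{lem:multihassle} and Lemma~\ref{ramsey2}) aligns the neck and makes the legs pairwise anticomplete. The bound that drives everything is $\|\mathcal{H}\|$, not any property of the walks, which is exactly the leverage your approach is missing.
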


\begin{figure}[t!]
\centering
\includegraphics[scale=0.6]{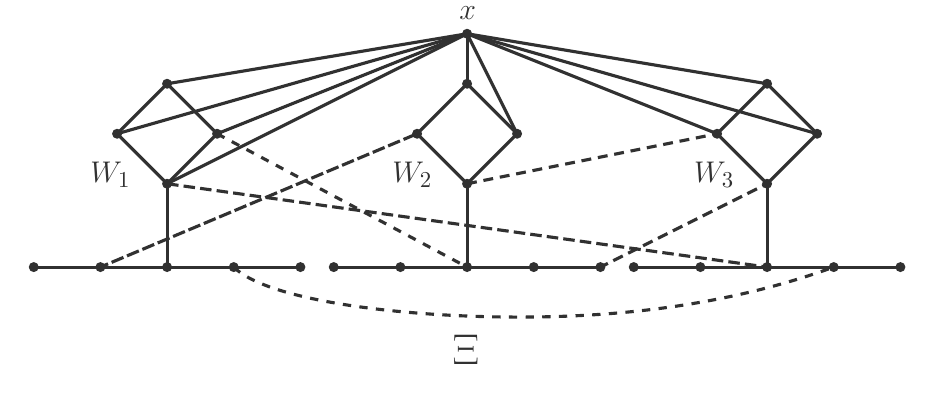}
\caption{A $3$-hassle $\Xi$ with neck $x$, where the walks $W_1,W_2,W_3$ of $\Xi$ are three copies of the $3$-stretched walk $W$ from Figure~\ref{fig:Walk}.}
\label{fig:hassle}
\end{figure}

 The goal of this section is to prove Theorem~\ref{thm:tassellediffhassled}, beginning with two lemmas:
 \begin{lemma}\label{lem:multitassel}
   Let $b,c,k\in \poi$, let $\mathcal{T}$ be a family of $c$-tassels, each with exactly $c$ paths, such that $|\mathcal{T}|\geq bkc^{k}$. For each $T\in \mathcal{T}$, let $x_T$ be the neck of $T$ and fix an enumeration $P^T_1,\ldots, P^T_c$ of the paths of $T$. Let $K$ be a connected graph on at most $k$ vertices which is not a path, and assume that for every $T\in \mathcal{T}$, there is an isomorphism $f_{T}$ from $K$ to an induced subgraph of $T$; in particular, $T$ contains $K$. Then there exist $x'\in V(K)$ and $\mathcal{T}'\subseteq \mathcal{T}$ with $|\mathcal{T}'|=b$ for which the following hold.
\begin{enumerate}[\rm (a)]
        \item\label{lem:multitassel_a} For every $T\in \mathcal{T}'$, we have $f_T(x')=x_T$.
        \item\label{lem:multitassel_b} For every component $L$ of $K\setminus \{x'\}$, there exists $i(L)\in \{1,\ldots, c\}$ such that for every $T\in \mathcal{T}'$, we have $f_{T}(L)\subseteq P^T_{i(L)}$.
        \end{enumerate}
 \end{lemma}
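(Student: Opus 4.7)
The plan is to apply two successive pigeonhole arguments, leveraging the structural fact that after removing its neck, a $c$-tassel falls apart into $c$ vertex-disjoint paths.

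First, I would establish that for every $T\in\mathcal{T}$ there is a unique vertex $x'_T\in V(K)$ with $f_T(x'_T)=x_T$. The point is that $T\setminus\{x_T\}$ is the disjoint union of the paths $P^T_1,\ldots,P^T_c$, so any connected induced subgraph of $T$ that avoids $x_T$ is contained in some $P^T_j$ and is therefore a path. Since $K$ is connected and not a path, its image $f_T(V(K))$ cannot lie in $T\setminus\{x_T\}$, hence $x_T\in f_T(V(K))$, and injectivity of $f_T$ gives the unique $x'_T=f_T^{-1}(x_T)$. Next, since $|V(K)|\leq k$, pigeonhole yields some $x'\in V(K)$ and some $\mathcal{T}_1\subseteq\mathcal{T}$ with $|\mathcal{T}_1|\geq |\mathcal{T}|/k\geq bc^k$ such that $x'_T=x'$ for every $T\in\mathcal{T}_1$, which establishes \eqref{lem:multitassel_a}.

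For \eqref{lem:multitassel_b}, I would enumerate the components of $K\setminus\{x'\}$ as $L_1,\ldots,L_r$, with $r\leq k-1$. For each $T\in\mathcal{T}_1$ and each $s\in\poi_r$, the image $f_T(L_s)$ is a connected induced subgraph of $T$ disjoint from $\{x_T\}$, so by the observation above it is a path and is contained in a unique $P^T_{j_T(s)}$ for some $j_T(s)\in\poi_c$. Each $T\in\mathcal{T}_1$ thus determines a function $j_T\colon\poi_r\to\poi_c$, and there are at most $c^r\leq c^k$ such functions. A second pigeonhole produces $\mathcal{T}'\subseteq\mathcal{T}_1$ with $|\mathcal{T}'|\geq |\mathcal{T}_1|/c^k\geq b$ on which $j_T$ is a common function; setting $i(L_s)$ to this common value for each $s$ gives \eqref{lem:multitassel_b}.

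The argument is essentially a routine double pigeonhole, so I do not anticipate a genuine obstacle; the only substantive input is the initial structural observation that any connected induced subgraph of a $c$-tassel which is not a path must contain the neck, and this is immediate from the definition of a tassel. The constant $bkc^k$ in the hypothesis is in fact slightly loose: the argument only requires $|\mathcal{T}|\geq bkc^{k-1}$, but this looseness costs nothing in applications.
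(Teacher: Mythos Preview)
Your proposal is correct and follows essentially the same double-pigeonhole argument as the paper: first pigeonhole on the preimage $f_T^{-1}(x_T)$ (at most $k$ choices) to get $\mathcal{T}_1$, then pigeonhole on the assignment of components of $K\setminus\{x'\}$ to paths (at most $c^k$ choices) to extract $\mathcal{T}'$. Your write-up is slightly more explicit about why $x_T$ must lie in the image of $f_T$, and your observation that $bkc^{k-1}$ suffices is accurate, but the structure of the argument is identical to the paper's.
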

 \begin{proof}
For each $T\in \mathcal{T}$, since $K$ is not a path and $f_T$ is an isomorphism, it follows that there is a unique vertex $x'_T\in V(K)$ such that $f_T(x'_T)=x_T$. Also, since $K$ has at most $k$ vertices, it follows that:

\sta{\label{st:sameneck} There exist $x'\in  V(K)$ and $\mathcal{T}_1\subseteq \mathcal{T}$ with $|\mathcal{T}_1|=bc^k$ such that for every $T\in \mathcal{T}_1$, we have $f_T(x')=x_T$.} 

By \eqref{st:sameneck}, for each $T\in \mathcal{T}_1$ and every component $L$ of $K\setminus \{x'\}$, we have $f_{T}(L)\subseteq T\setminus \{x_T\}$, which in turn implies that there exists $i(L,T)\in \poi_{c}$ for which we have $f_{T}(L)\subseteq P^T_{i(L,T)}$. Now, since $K\setminus \{x'\}$ has at most $k$ components and since $|\mathcal{T}_1|\geq bc^k$, it follows that there exists $\mathcal{T}'\subseteq \mathcal{T}_1\subseteq \mathcal{T}$ with $|\mathcal{T'}|=b$, as well as $i(L)\in \poi_{c}$ for every component $L$ of $K\setminus \{x'\}$, such that for each $T\in \mathcal{T}'$, we have $i(L,T)=i(L)$. Hence, $x'$ and $\mathcal{T}'$ satisfy \ref{lem:multitassel}\ref{lem:multitassel_b}. Moreover, from \eqref{st:sameneck}, it follows that $x'$ and $\mathcal{T}'$ satisfy \ref{lem:multitassel}\ref{lem:multitassel_a}, as desired.
\end{proof}

\begin{lemma}\label{lem:multihassle}
    For every finite and tasselled set $\mathcal{H}$ of graphs,  there is a constant $\xi=\xi(\mathcal{H})\in \poi$ with the following property.  For every $\xi$-hassle $\Xi$ with neck $x$, there is a graph $H\in \mathcal{H}$ such that for every component $K$ of $H$, one of the following holds.
    \begin{enumerate}[\rm (a)]
        \item\label{lem:multihassle_a} $K$ is a path.
        \item\label{lem:multihassle_b} There exists $x'\in V(K)$ and a map $f:V(K)\rightarrow V(\Xi)$ with $f^{-1}(\{x\})=\{x'\}$, such that for every component $L$ of $K\setminus \{x'\}$, the restriction of $f$ to 
$\{x'\}\cup L$ is an isomorphism from $K[\{x'\}\cup L]$ to $\Xi[\{x\}\cup f(L)]$.
    \end{enumerate}
\end{lemma}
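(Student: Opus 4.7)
My plan is to feed the tasselled property of $\mathcal{H}$ an abstract $c$-tassel whose strand type has been ``extracted'' from $\Xi$, and then lift the resulting containment to the embeddings required by Lemma~\ref{lem:multihassle}. Let $c := c(\mathcal{H})$ denote the tasselled constant and $k := \max_{H \in \mathcal{H}} |V(H)|$; the constant $\xi = \xi(\mathcal{H})$ will be chosen large enough to absorb the pigeonhole argument below.

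For each walk $W_j$ of $\Xi$, I extract a subpath $Q_j \subseteq W_j$ that, together with $x$, forms a $c$-strand of bounded length $L = L(c)$. Starting at the first position $p_j \geq \xi + 1$ at which $x$ has a neighbour in $W_j$---the $c$ positions immediately preceding $p_j$ are then automatically non-neighbours, by the hassle property together with $p_j$ being the first $x$-neighbour---I extend $p_j$ to the right through the first cluster of consecutive $x$-neighbours (each within distance $c$ of the next) until a gap of length $c$ is encountered; adding a $c$-buffer on either end yields a window of length at most $L$, which is an induced path of $\Xi$ by the $\xi$-stretching of $W_j$. Since there are only finitely many isomorphism types of $c$-strands on at most $L$ vertices, pigeonholing over the $\geq \xi$ walks produces $c$ walks $W_{j_1}, \dots, W_{j_c}$ whose extracted strands are all isomorphic to a common $c$-strand $F$. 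Gluing $c$ disjoint copies of $F$ at a single neck $x_{T^\ast}$ yields an abstract $c$-tassel $T^\ast$, and the tasselled property of $\mathcal{H}$ furnishes a graph $H \in \mathcal{H}$ every component of which is contained in $T^\ast$.

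Now fix a component $K$ of $H$. If $K$ is a path, then~\ref{lem:multihassle}(a) holds; otherwise $K$ is connected but not a path. Since $T^\ast \setminus \{x_{T^\ast}\}$ is a disjoint union of paths (one per strand), the containment isomorphism $g : K \to g(K) \subseteq T^\ast$ must send a unique vertex $x'_K$ of $K$ to $x_{T^\ast}$, and each component $L$ of $K \setminus \{x'_K\}$ must lie inside a single strand of $T^\ast$, say the $i(L)$-th. By construction, that strand of $T^\ast$ is a faithful copy of $Q_{j_{i(L)}}$, so there is a canonical identification of $g(L)$ with a corresponding set of consecutive vertices inside $W_{j_{i(L)}} \subseteq V(\Xi)$; define $f_K : V(K) \to V(\Xi)$ by $f_K(x'_K) = x$ and, on each $L$, by composing $g|_L$ with this identification. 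Then $f_K^{-1}(\{x\}) = \{x'_K\}$; and for each component $L$, the set $f_K(L)$ consists of $|L|$ consecutive positions of $W_{j_{i(L)}}$, which by $\xi$-stretching induce a path in $\Xi$ isomorphic to $L$, with $x$-adjacencies matching those of $g(L)$ with $x_{T^\ast}$ in $T^\ast$ (because both walks' extracted strands realise the common type $F$). Consequently the restriction of $f_K$ to $\{x'_K\} \cup L$ is the required induced isomorphism onto $\Xi[\{x\} \cup f_K(L)]$, giving~\ref{lem:multihassle}(b).

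The main difficulty is uniform strand extraction in the first step: a walk $W_j$ whose $x$-neighbourhood forms a single extremely dense cluster with no gap of length $\geq c$ anywhere in its middle region, and whose length greatly exceeds $\xi$, may admit no $c$-strand subpath of length at most $L$. I expect this obstacle to be handled by a more refined analysis---for example, enlarging $L$ as a further function of $c$ and exploiting the $\xi$-stretching to ``see past'' such clusters, or invoking Lemma~\ref{lem:multitassel} on a family of abstract tassels arising from the full range of legal strand signatures, so as to avoid making a single global extraction choice and instead collapse the case distinction by the consistency provided by that lemma. Once uniform bounded-length extraction is established, the subsequent pigeonhole and embedding-lift steps described above go through without modification.
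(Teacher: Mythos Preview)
Your proposal has a genuine gap, and it is exactly the one you flag: the bounded-length strand extraction cannot be carried out in general. If along some walk $W_j$ the $x$-neighbours are spread out with no gap of length $c$ anywhere between the first and last $\xi$ positions, then your ``window'' has length comparable to $n_{W_j}$, which is unbounded. This kills the argument twice over: the window need not lie inside a single $\xi$-segment, so $\xi$-stretching no longer guarantees it induces a path; and even if it did, there would be infinitely many isomorphism types of extracted strands, so the pigeonhole step collapses. Your first suggested fix (enlarging $L$ as a function of $c$) cannot help, since the obstruction depends on $n_{W_j}$, not on $c$. Your second suggestion (invoking Lemma~\ref{lem:multitassel} on a range of signatures) is pointed in the right direction but does not actually remove the need for bounded-length extraction.

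The paper's proof bypasses extraction entirely by reversing the order of operations. For each walk $W$ it builds a $c$-tassel $T_W$ whose $c$ paths are all of length $n_W$, with the neck $x_W$ adjacent to position $p$ in each path precisely when $x$ is adjacent to $\varphi_W(p)$ in $\Xi$; there is no attempt to keep these paths short. The tasselled property, applied to each $T_W$, yields some $H_W\in\mathcal{H}$ whose components all embed in $T_W$; a pigeonhole over $\mathcal{H}$ then fixes a single $H$ that works for many walks, and Lemma~\ref{lem:multitassel} aligns the neck-preimages and strand indices across these walks. The crucial point is that the bounded-length requirement now comes for free \emph{after} the tasselled property has been applied: each component $L$ of $K\setminus\{x'\}$ has $|L|<||\mathcal{H}||\leq\xi$, so its image in a path of $T_W$ occupies a $\xi$-segment $S$ of positions, and $\varphi_W(S)$ is an induced path in $\Xi$ by $\xi$-stretching, with $x$-adjacencies matching by construction. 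In short: do not extract a short strand and then invoke tasselled; invoke tasselled on the full-length tassel and let the smallness of $H$ provide the short segment afterwards.
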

\begin{proof}
  Since $\mathcal{H}$ is tasselled, it follows that  there is a constant $c=c(\mathcal{H})\in \poi$ with the property that for every $c$-tassel  $T$
there is  $H \in \mathcal{H}$   such that $T$  contains every component of
$H$.
We claim that
$$\xi=\xi(\mathcal{H})=||\mathcal{H}||^2c^{||\mathcal{H}||+1}$$
satisfies the lemma.
To see this, let $\Xi$ be a $\xi$-hassle with neck $x$, and let $\mathcal{W}$ be the collection of all walks of $\Xi$. Recall that each walk in $\Xi$ is $\xi$-stretched.

For each $W\in \mathcal{W}$, let $n_W$ and $\varphi_W$ be as in the definition of a walk, and construct a $c$-tassel $T_W$ as follows. Let $P^W_1,\ldots, P^W_{c}$ be $c$ pairwise disjoint and anticomplete paths, each on $n_W$ vertices, and for every $i\in \poi_c$, choose a bijection $g^W_i:V(P^W_i)\rightarrow \poi_{n_W}$ such that $p,p'\in V(P^W_i)$ are adjacent in $P^W_i$ if and only if $|g^W_i(p)-g^W_i(p')|=1$ (note that there are only two such bijections). Let $T_W$ be the graph obtained from $P^W_1,\ldots, P^W_{c}$ by adding a vertex $x_W$ such that for every $i\in \poi_{c}$ and every $p\in V(P^W_i)$, the vertex $x_W$ is adjacent to $p$ in $T_W$ if and only if $x$ is adjacent to $\varphi_W(g^W_{i}(p))$ in $\Xi$ (see Figure~\ref{fig:walktopath}).

\begin{figure}[t!]
\centering
\includegraphics[scale=0.6]{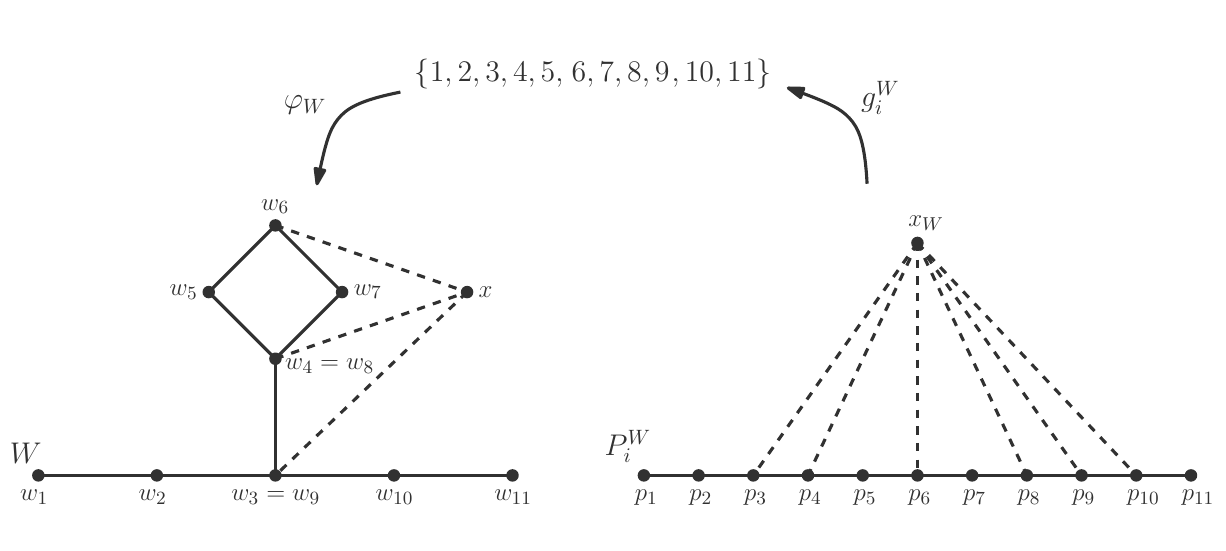}
\caption{Left: the walk $W$ with $n_W=11$ and $\varphi_W(j)=w_j$ for all $i\in \poi_{11}$. Right: the path $P_i^W$ with $g^W_i(p_j)=j$ for all $j\in \poi_{11}$. Observe that for every $j\in \poi_{11}$, we have $\varphi_W(g^W_{i}(p_j))=w_j$, and $x_W$ is adjacent to $p_j$ in $T_W$ if and only if $x$ is adjacent to $w_j$ in $\Xi$.}
\label{fig:walktopath}
\end{figure}

Note that for every $W\in \mathcal{W}$, in the graph $\Xi$, the vertex $x$ has a neighbor in $W$ and no neighbor among the first and last $\xi$ vertices of $W$. In particular, from the construction and the fact that $\xi\geq c$, it follows that for every $i\in \poi_{c}$, $x$ has a neighbor in $P^W_i$ and no neighbor among the first and last $c$ vertices of $P^W
_i$. Thus, for every $W\in \mathcal{W}$, the graph $T_W$ is a $c$-tassel with neck $x_W$ and paths $P^W_1,\ldots, P^W_{c}$.

Also, since $\mathcal{H}$ is tasselled, it follows from the choice of $c$ that for every $W\in \mathcal{W}$ there exists $H \in \mathcal{H}$ such that
the $c$-tassel $T_W$ contains every  component of $H$.
Consequently, since $|\mathcal{W}|\geq \xi$ and $|\mathcal{H}|\leq ||\mathcal{H}||$, it follows that there exists $H\in \mathcal{H}$ and $\mathcal{W}'\subseteq \mathcal{W}$ with $|\mathcal{W}'|=||\mathcal{H}||c^{||\mathcal{H}||+1}$ such that for every $W\in \mathcal{W}'$, the $c$-tassel $T_W$ contains every component of $H$.

We now prove that $H$ satisfies \ref{lem:multihassle}. Let $K$ be a component of $H$ which is not a path. We wish to show that $K$ satisfies \ref{lem:multihassle}\ref{lem:multihassle_b}. Note that for every $W\in \mathcal{W}'$, the $c$-tassel $T_W$ contains $K$, and so there is an isomorphism $f_{W}$ from $K$ to an induced subgraph of $T_W$. This allows for an application of Lemma~\ref{lem:multitassel} to $\mathcal{T}=\{T_W:W\in \mathcal{W}'\}$ and $K$. Since $|V(K)|\leq |V(H)|\leq ||\mathcal{H}||$, we deduce that there exists $x'\in V(K)$ as well as $W_1,\ldots, W_c\in \mathcal{W}'$, such that $x'$ and $\mathcal{T}'=\{T_{W_1},\ldots, T_{W_c}\}$ satisfy Lemma~\ref{lem:multitassel}\ref{lem:multitassel_a} and \ref{lem:multitassel_b}.

Henceforth, for each $j\in \poi_{c}$, we write 
$$T_j=T_{W_j};  \quad x_j=x_{W_j}; \quad f_j=f_{W_j}; \quad n_j=n_{W_j}; \quad \varphi_j=\varphi_{W_j}.$$
Also, for $i,j\in \poi_{c}$, we write 
$$P_{i,j}=P^{W_j}_i; \quad g_{i,j}=g^{W_j}_i.$$
The outcomes of Lemma~\ref{lem:multitassel} can now be rewritten as:

\sta{\label{st:lemtasselagain} The following hold.
\begin{itemize}
        \item For every $j\in \poi_{c}$, we have $f_j(x')=x_{j}$.
        \item For every component $L$ of $K\setminus \{x'\}$, there exists $i(L)\in \{1,\ldots, c\}$ such that for every $j\in \poi_{c}$, we have $f_{j}(L)\subseteq P_{i(L),j}$.
        \end{itemize}}
        
On the other hand, since $|K|\leq |H|\leq ||\mathcal{H}||\leq \xi$, it follows that every component of $K\setminus \{x'\}$ is a path on less than $\xi$ vertices. This, combined with the second bullet of \eqref{st:lemtasselagain}, yields the following:

\sta{\label{st:getsegment} For every $j\in \poi_{c}$ and every component $L$ of $K\setminus \{x'\}$, there exists a $\xi$-segment $S_{j,L}\subseteq \poi_{n_j}$ for which we have 
$f_j(L)=g_{i(L),j}^{-1}(S_{j,L})\subseteq P_{i(L),j}$.}

Let us now finish the proof. Define a map $f:V(K)\rightarrow V(\Xi)$ as follows. Let $f(x')=x$, and for every component $L$ of $K\setminus \{x'\}$ and every $y\in L$, let
$$f(y)=\varphi_{i(L)}(g_{i(L),i(L)}(f_{i(L)}(y))).$$

\begin{figure}[t!]
\centering
\includegraphics[scale=0.6]{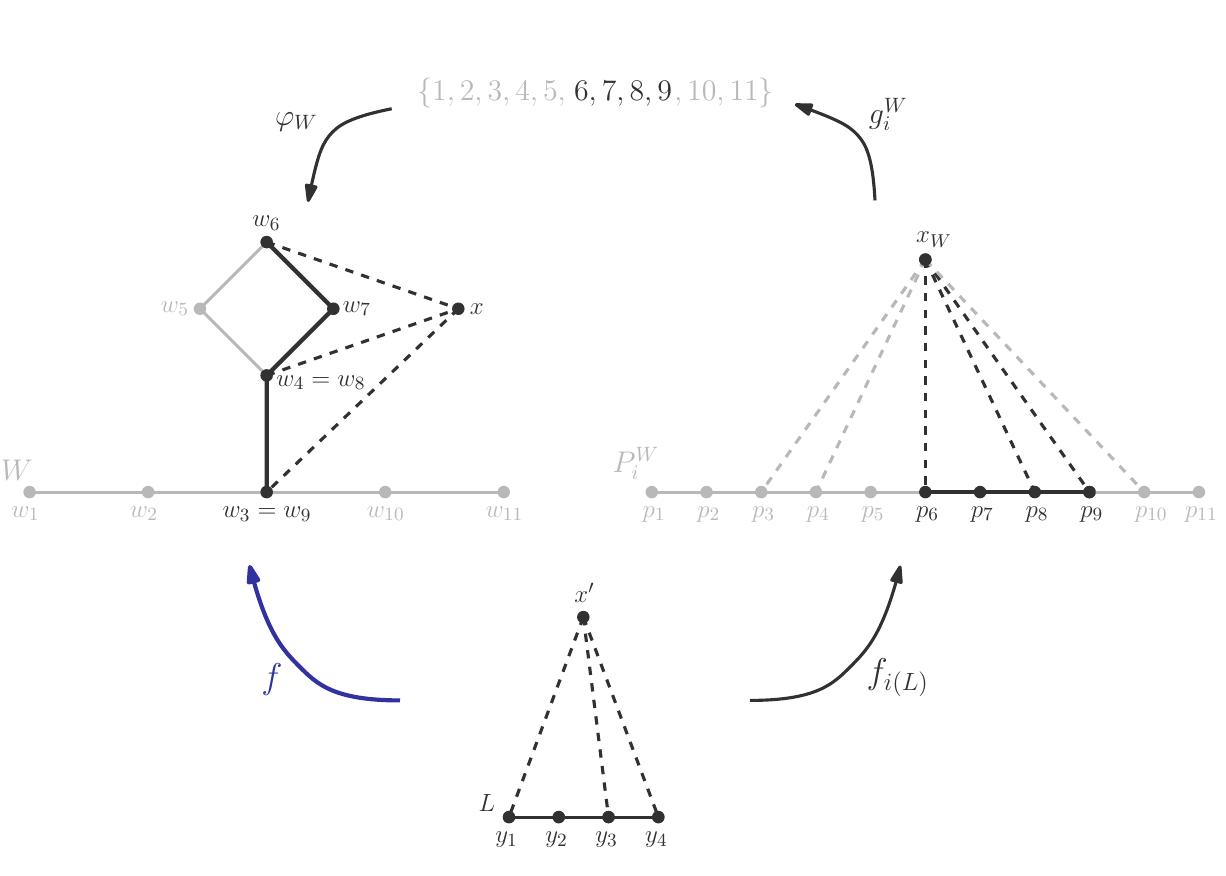}
\caption{The map $f:V(K)\rightarrow V(\Xi)$. For each $j\in \poi_{4}$, we have $f_{i(L)}(y_j)=p_{j+5}$, which yields $f(y_j)=w_{j+5}$.}
\label{fig:walkramsey}
\end{figure}

See Figure~\ref{fig:walkramsey}. We prove that $f$ satisfies Lemma~\ref{lem:multihassle}\ref{lem:multihassle_b}. Let $L$ be a component of $K\setminus \{x'\}$. By \eqref{st:getsegment}, we have $f(L)=\varphi_{i(L)}(S_{i(L),L})\subseteq W_{i(L)}\subseteq V(\Xi)\setminus \{x\}$, and so we have  $f^{-1}(\{x\})=\{x'\}$. This, along with the assumption that $W_{i(L)}$ is a $\xi$-stretched walk, implies that $
f(L)$ is a path in $W_{i(L)}$. In particular, the restriction of $f$ to $L$ is an isomorphism from $K[L]$ to $\Xi[f(L)]$.

It remains to show that for every $y\in L$, the vertices $x',y$ are adjacent in $K$ if and only if $x,f(y)$ are adjacent in $\Xi$. To that end, note that since $f_{i(L)}$ is an isomorphism from $K$ to an induced subgraph of $T_{i(L)}$, it follows from the first bullet of \eqref{st:lemtasselagain}  that $x'$ is adjacent to $y$ in $K$ if and only if $f_{i(L)}(x')=x_{i(L)}$ is adjacent to $f_{i(L)}(y)\in P_{i(L),i(L)}$ in $T_{i(L)}$. In addition, from the definition of $T_i$, it follows that $x_{i(L)}$ is adjacent to $f_{i(L)}(y)\in P_{i(L),i(L)}$ in $T_{i(L)}$ if and only if  $x$ is adjacent to $\varphi_{i(L)}(g_{i(L),i(L)}(f_{i(L)}(y)))=f(y)$ in $\Xi$.
This completes the proof of Lemma~\ref{lem:multihassle}.
\end{proof}

We also need the following well-known result.

\begin{lemma}[See Lemma 2 in \cite{lozin}]\label{ramsey2}
For all $q,r,t\in \poi$  there is a constant $\Delta=\Delta(q,r,t)\in \poi$ with the following property.  Let $G$ be a $(K_{t+1},K_{t,t})$-free graph. Let $\mathcal{X}$ be a collection of pairwise disjoint subsets of $V(G)$, each of cardinality at most $r$, with $|\mathcal{X}|\geq \Delta$. Then there are $q$ distinct sets $X_1,\ldots, X_q\in \mathcal{X}$ which are pairwise anticomplete in $G$.
\end{lemma}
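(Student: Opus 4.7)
The plan is to apply Ramsey's theorem to the pairs of sets in $\mathcal{X}$, coloring each pair by the bipartite edge pattern between the two sets in $G$, and then to leverage the $(K_{t+1},K_{t,t})$-freeness to rule out all nontrivial monochromatic colors. After a preliminary pigeonhole (costing a factor of $r$ in $|\mathcal{X}|$), I may assume every $X\in\mathcal{X}$ has the same cardinality $r$, and fix an enumeration $x_{X,1},\ldots,x_{X,r}$ of each $X$ together with a total ordering of $\mathcal{X}$. For each pair $\{X,Y\}$ with $X<Y$, assign the color $E(X,Y):=\{(a,b)\in\poi_r\times\poi_r:x_{X,a}x_{Y,b}\in E(G)\}$; there are at most $2^{r^2}$ possible colors.

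For $|\mathcal{X}|$ sufficiently large in terms of $r,t,q$, multicolor Ramsey produces a monochromatic subfamily $\mathcal{X}'\subseteq\mathcal{X}$ of any prescribed size $M$. If the common color is empty, then the members of $\mathcal{X}'$ are pairwise anticomplete in $G$, and choosing $M\geq q$ yields the conclusion. Otherwise fix some $(a,b)$ in the common color, so that $x_{X,a}$ is adjacent to $x_{Y,b}$ for every $X<Y$ in $\mathcal{X}'$.

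If $a=b$, then $\{x_{X,a}:X\in\mathcal{X}'\}$ is a clique of size $M$ in $G$, contradicting $K_{t+1}$-freeness as soon as $M\geq t+1$. If $a\neq b$, split $\mathcal{X}'$ into a ``lower'' half $\mathcal{X}'_A$ and an ``upper'' half $\mathcal{X}'_B$, and set $A:=\{x_{X,a}:X\in\mathcal{X}'_A\}$ and $B:=\{x_{Y,b}:Y\in\mathcal{X}'_B\}$. These sets are disjoint (since $a\neq b$ and the members of $\mathcal{X}$ are pairwise disjoint) and every vertex of $A$ is adjacent to every vertex of $B$ in $G$. Fix any $v\in B$: since $v$ is adjacent to every vertex of $A$ and $G$ is $K_{t+1}$-free, the set $A$ contains no $K_t$. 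Applying Ramsey's theorem once more inside the $K_t$-free graph on $A$ yields an independent set of size $t$ in $A$, and symmetrically in $B$; together these form an induced $K_{t,t}$ in $G$, a contradiction.

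Thus the monochromatic color must have been empty, and the lemma follows with $M=\max\{q,\,t+1,\,R(t,t)\}$ (where $R$ is the classical Ramsey number) and $\Delta(q,r,t)$ equal to $r$ times the $2^{r^2}$-color Ramsey number of $M$. The main subtle point is that we need an \emph{induced} $K_{t,t}$ in the case $a\neq b$ rather than merely a biclique as a subgraph, which is exactly what forces the secondary Ramsey step inside $A$ and $B$ and is where the $K_{t+1}$-free hypothesis is essential (beyond its use for the $a=b$ case).
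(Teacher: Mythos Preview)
The paper does not give its own proof of this lemma; it is quoted verbatim from Lozin--Razgon \cite{lozin} (hence the ``See Lemma~2 in \cite{lozin}'' in the statement). So there is nothing in the present paper to compare your argument against.

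That said, your argument is correct and is essentially the standard proof. Two minor bookkeeping points: after the pigeonhole you should say the sets all have the same cardinality $r'\leq r$ (not necessarily $r$ itself), and in the $a\neq b$ case you need $M\geq 2R(t,t)$ rather than $M\geq R(t,t)$, since you halve $\mathcal{X}'$ before applying Ramsey inside $A$ and $B$. Neither affects the validity of the approach.
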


We can now prove Theorem~\ref{thm:tassellediffhassled}, which we restate:

\setcounter{theorem}{0}
\begin{theorem}
Let $\mathcal{H}$ be a finite set of graphs. Then $\mathcal{H}$ is tasselled if and only if $\mathcal{H}$ is hassled. 
\end{theorem}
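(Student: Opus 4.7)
The ``only if'' direction is immediate: every $c$-tassel is, by definition, a $(K_4, K_{3,3})$-free $c$-hassle, and so the constant $c(\mathcal{H}, 3)$ provided by the hassled property of $\mathcal{H}$ also witnesses that $\mathcal{H}$ is tasselled.

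For the ``if'' direction, fix $t \in \poi$ and set $\xi = \xi(\mathcal{H})$ as in Lemma~\ref{lem:multihassle}. The plan is to use Ramsey-type arguments on the walks of the given hassle $\Xi$ to extract a $\xi$-sub-hassle $\Xi_0$ inside $\Xi$ whose walks are pairwise anticomplete and share a common strand type, together with a surplus of like walks reserved for later ``transplantation.'' Then Lemma~\ref{lem:multihassle} applied to $\Xi_0$ will yield $H \in \mathcal{H}$ together with, for each non-path component $K$ of $H$, a vertex $x' \in V(K)$ and a map $f_K : V(K) \to V(\Xi_0)$ with $f_K(x') = x$ such that $f_K$ realizes $K[\{x'\} \cup L]$ as an induced subgraph of $\Xi$ for each component $L$ of $K \setminus \{x'\}$. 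Two obstructions prevent $f_K$ from being an honest induced embedding of $K$: distinct components $L \neq L'$ of $K \setminus \{x'\}$ may land in the same walk of $\Xi_0$, and walks of a hassle may have arbitrary edges between them. Using the reserved surplus of pairwise anticomplete walks of matching strand type, transplant each $f_K(L)$ to its own fresh walk; the common type preserves each realization of $K[\{x'\} \cup L]$, while the pairwise anticompleteness removes both obstructions. Path components of $H$, of size at most $||\mathcal{H}||$, fit inside any single walk's trimming.

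Concretely, for each walk $W$ of $\Xi$, trim $W$ to an induced sub-path $W^\star$ of bounded length $r = r(\xi, ||\mathcal{H}||)$ around an $x$-neighbor; this is a path because $W$ is $c$-stretched with $c \geq r$, and one can arrange that the $x$-adjacency pattern inside $W^\star$ is rich enough to host any $f_K(L)$. By pigeonhole on the at most $2^r$ possible $x$-adjacency patterns of $W^\star$, extract a sub-family $\mathcal{W}_0$ of walks sharing a common trimmed strand type. Then apply Lemma~\ref{ramsey2} to $\mathcal{W}_0$ with $q = \xi + ||\mathcal{H}||$ and parameter $r$: the $(K_{t+1}, K_{t,t})$-free hypothesis yields $q$ walks whose trimmings are pairwise anticomplete in $\Xi$, provided $|\mathcal{W}_0| \geq \Delta(q, r, t)$. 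Reserve $\xi$ of them to form $\Xi_0$ for the application of Lemma~\ref{lem:multihassle}, and keep the remaining $||\mathcal{H}||$ for transplantation as above.

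The principal obstacle is the calibration of $r$. It must be large enough that the common trimmed strand is $\xi$-stretched and contains a central region capable of accommodating any $f_K(L)$ with the prescribed attachment to $x$, so that Lemma~\ref{lem:multihassle} applies to $\Xi_0$ and transplants carry the intended structure; yet $r$ must remain bounded so that the pigeonhole factor $2^r$ and the Ramsey threshold $\Delta(\xi + ||\mathcal{H}||, r, t)$ stay finite when used to define $c(\mathcal{H}, t)$. Setting $r = O(\xi)$ and $c(\mathcal{H}, t) \geq 2^r \cdot \Delta(\xi + ||\mathcal{H}||, r, t)$ balances these requirements and completes the argument.
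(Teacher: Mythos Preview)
Your direction labels are swapped: the argument you give for the ``only if'' direction actually shows hassled $\Rightarrow$ tasselled, which is the ``if'' direction, and conversely. This is minor.

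The substantive direction (tasselled $\Rightarrow$ hassled) has a genuine gap. Your plan hinges on trimming each walk $W$ of the given $c$-hassle to a bounded-length sub-path $W^\star$ and then forming a $\xi$-hassle $\Xi_0$ from $\xi$ such trims so that Lemma~\ref{lem:multihassle} applies. But for $\Xi_0$ to be a $\xi$-hassle, each $W^\star$ must have its first and last $\xi$ vertices anticomplete to the neck $x$, and no bounded trim can guarantee this. Concretely, take $W$ to be a path $p_1 \dd \cdots \dd p_n$ with $x$ adjacent to every $p_j$ for $c < j \leq n-c$; this is a perfectly legal walk of a $c$-hassle. Any sub-path of $W$ that contains an $x$-neighbour and also has $\xi$ consecutive $x$-non-neighbours at each end must span essentially the whole of $W$, so has length at least $n - 2c$, which is unbounded. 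Hence there is no uniform $r$, and neither Lemma~\ref{lem:multihassle} nor the tasselled property applies to your $\Xi_0$. Applying Lemma~\ref{lem:multihassle} to the untrimmed walks instead does not rescue the transplantation step, since you then lose all control over where $f_K(L)$ lands relative to the trims.

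The paper avoids this by reversing the order of operations. It does not trim at all: it partitions the walks of $\Xi$ into many disjoint groups of size $\xi$, applies Lemma~\ref{lem:multihassle} separately to each group (the original walks already form valid $\xi$-sub-hassles), and only afterwards pigeon-holes on the \emph{outcome} --- first on which $H\in\mathcal{H}$ arises, then on the vertex $x'$. The ``transplantation'' is then achieved by taking each component $L_i$ of $K\setminus\{x'\}$ from a different group's map $f_{\mathcal{W}_i}$; anticompleteness between the pieces comes from applying Lemma~\ref{ramsey2} to the images $f_{\mathcal{W}}(K\setminus\{x'\})$, which have cardinality at most $\|\mathcal{H}\|$ regardless of how long the underlying walks are. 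This sidesteps any need for a common strand type.
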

\begin{proof}
    The ``if'' implication is clear as discussed at the beginning of this section. To prove the ``only if'' implication, assume that $\mathcal{H}$ is a finite set of graphs which is tasselled. Let $\xi=\xi(\mathcal{H})\in \poi$ be as in Lemma~\ref{lem:multihassle}. For every $t\in \poi$, let $\Delta=\Delta(||\mathcal{H}||,||\mathcal{H}||,t)$ be as in Lemma~\ref{ramsey2}, and let $$c=c(\mathcal{H},t)=\xi\Delta||\mathcal{H}||^2.$$
    We prove that for every $(K_{t+1},K_{t,t})$-free $c$-hassle $\Xi$, there exists a graph $H\in \mathcal{H}$ such that $\Xi$ contains each component of $H$. This will show that $\mathcal{H}$ is hassled.

Let $x$ be the neck of $\Xi$. By the choice of $c$, we may choose a set $\mf{W}$ of $\Delta||\mathcal{H}||^2$ pairwise disjoint families of walks of $\Xi$, each of cardinality $\xi$. It follows that for every $\mathcal{W}\in \mf{W}$, the graph $\Xi_{\mathcal{W}}=\Xi[\{x\}\cup (\bigcup_{W\in \mathcal{W}}V(W))]$ is a $\xi$-hassle with neck $x$, and with $\mathcal{W}$ as its set of walks.

Since $\mathcal{H}$ is tasselled, and by the choice of $\xi$, for each $\mathcal{W}\in \mf{W}$, we can apply Lemma~\ref{lem:multihassle} to $\mathcal{H}$ and $\Xi_{\mathcal{W}}$, and obtain a graph $H_{\mathcal{W}}\in \mathcal{H}$ satisfying Lemma~\ref{lem:multihassle}\ref{lem:multihassle_a} and \ref{lem:multihassle_b}. Moreover, since $|\mathcal{H}|\leq ||\mathcal{H}||$ and $|\mf{W}|=\Delta||\mathcal{H}||^2$, it follows that there exist $H\in \mathcal{H}$ and $\mf{X}\subseteq \mf{W}$ with $|\mf{X}|=\Delta||\mathcal{H}||$ such that for every $\mathcal{W}\in \mf{X}$, we have $H_{\mathcal{W}}=H$. More explicitly, for every component $K$ of $H$, one of the following holds.
\begin{itemize}
    \item $K$ is a path. 
    \item For every $\mathcal{W}\in \mf{X}$, there exists $x'_{\mathcal{W}}\in V(K)$ and an injective map $f_{\mathcal{W}}:V(K)\rightarrow V(\Xi_{\mathcal{W}})$ with $f^{-1}(\{x\})=\{x'_{\mathcal{W}}\}$, such that for every component $L$ of $K\setminus \{x'_{\mathcal{W}}\}$, the restriction of $f_{\mathcal{W}}$ to $\{x'_{\mathcal{W}}\}\cup L$ is an isomorphism from $K[\{x'_{\mathcal{W}}\}\cup L]$ to $\Xi[\{x\}\cup f_{\mathcal{W}}(L)]$.
\end{itemize}

To conclude the proof, it suffices to show that $\Xi$ contains every component $K$ of  $H$. Assume that $K$ is a path. Since $\Xi$ is a $c$-hassle, it follows that $\Xi$ contains a path on $c$ vertices. But now we are done because $|K|\leq ||\mathcal{H}||\leq c$. Consequently, we may assume that $K$ is not a path, and so the second bullet above holds for $K$ and every $\mathcal{W}\in \mf{X}$. In addition, from $|K|\leq ||\mathcal{H}||$ and $|\mf{X}|=\Delta||\mathcal{H}||$, it follows that there exist $x'\in V(K)$ and $\mf{Y}\subseteq \mf{X}$ with $|\mf{Y}|=\Delta$ such that for every $\mathcal{W}\in \mf{Y}$, we have $x'_{\mathcal{W}}=x'$. 

 On the other hand, $\{f_{\mathcal{W}}(K\setminus \{x'\}):\mathcal{W}\in \mf{Y}\}$ is a collection of $\Delta$ pairwise disjoint subsets of $\Xi\setminus \{x\}$, each of cardinality less than $|K|\leq ||\mathcal{H}||$. This, along with the choice of $\Delta$ and the assumption that $\Xi$ is $(K_{t+1},K_{t,t})$-free, allows for an application of Lemma~\ref{ramsey2}. We deduce that:

\sta{\label{st:anticomphassle}There exist $\mathcal{W}_1, \ldots, \mathcal{W}_{||\mathcal{H}||}\in \mf{Y}$ for which the sets $f_{\mathcal{W}_1}(K\setminus \{x'\}),\ldots, f_{\mathcal{W}_{||\mathcal{H}||}}(K\setminus \{x'\})\subseteq \Xi\setminus \{x\}$ are pairwise anticomplete in $\Xi$.}

Now, let $L_1,\ldots, L_k$ be an enumeration of the components of $K\setminus \{x'\}$; then we have $k<|K|\leq ||\mathcal{H}||$. Let  $\mathcal{W}_1, \ldots, \mathcal{W}_{k}\in \mf{Y}$ be as in \eqref{st:anticomphassle}. By the second bullet above, for each $i\in \poi_k$, the restriction of $f_{\mathcal{W}_i}$ to $\{x'\}\cup L_i$ is an isomorphism from $K[\{x'\}\cup L_i]$ and $\Xi[\{x\}\cup f_{\mathcal{W}_i}(L_i)]$ with $f_{\mathcal{W}_i}^{-1}(\{x\})=\{x'\}$. Moreover, by \eqref{st:anticomphassle}, the sets $\{f_{\mathcal{W}_i}(L_i):i\in \poi_{k}\}$ are pairwise disjoint and anticomplete in $\Xi$. Hence, $K$ is isomorphic to 
$$\Xi\left[\{x\}\cup \left(\bigcup_{i=1}^kf_{\mathcal{W}_i}(L_i)\right)\right].$$
This completes the proof of Theorem~\ref{thm:tassellediffhassled}. 
 \end{proof}
 \setcounter{theorem}{5}
\section{A lemma about pairs of sets of vertices}\label{sec:prilim}

We now begin to make our way to the proof of Theorem~\ref{mainif}. In particular, here we prove a Ramsey-type result that we will use several times later. We need the following product version of Ramsey's Theorem.

\begin{theorem}[Graham, Rothschild and Spencer \cite{productramsey}]\label{productramsey}
For all $n,q,r\in \poi$,  there is a constant $\nu(n,q,r)\in \poi$ with the following property. Let $U_1,\ldots, U_n$ be $n$ sets, each of cardinality at least $\nu(n,q,r)$ and let $W$ be a non-empty set of cardinality at most $r$. Let $\Phi$ be a map from the Cartesian product $U_1\times \cdots \times U_n$ to $W$. Then there exist $i\in W$ and $Z_j\subseteq U_j$ with $|Z_j|=q$ for each $j\in \poi_{n}$, such that for every $z\in Z_1\times \cdots\times Z_n$, we have $\Phi(z)=i$.
\end{theorem}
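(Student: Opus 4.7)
The plan is to prove Theorem~\ref{productramsey} by induction on $n$, the number of factors in the product. For the base case $n=1$, the statement reduces to the ordinary pigeonhole principle: given any map $\Phi\colon U_1\to W$ with $|W|\leq r$ and $|U_1|\geq (q-1)r+1$, some fibre $\Phi^{-1}(i)$ has size at least $q$, yielding the required $Z_1$ and colour $i$. So it suffices to set $\nu(1,q,r)=(q-1)r+1$.

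For the inductive step, assume the theorem holds for $n-1$ with constant $N:=\nu(n-1,q,r)$. Given $\Phi\colon U_1\times\cdots\times U_n\to W$ with each $|U_j|\geq \nu(n,q,r)$, first fix arbitrary ``witness'' subsets $V_2\subseteq U_2,\ldots,V_n\subseteq U_n$ of size exactly $N$. For each $u_1\in U_1$, the restriction $\Phi_{u_1}\colon V_2\times\cdots\times V_n\to W$ defined by $\Phi_{u_1}(v_2,\ldots,v_n)=\Phi(u_1,v_2,\ldots,v_n)$ is one of at most $r^{N^{n-1}}$ possible maps. Hence, provided $|U_1|\geq q\cdot r^{N^{n-1}}$, the pigeonhole principle (i.e.\ the $n=1$ case, applied to $u_1\mapsto \Phi_{u_1}$) produces a set $Z_1\subseteq U_1$ with $|Z_1|=q$ and a fixed map $\phi_0\colon V_2\times\cdots\times V_n\to W$ such that $\Phi_{u_1}=\phi_0$ for every $u_1\in Z_1$.

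Now apply the inductive hypothesis to $\phi_0$ on the product $V_2\times\cdots\times V_n$, whose factors each have size $N=\nu(n-1,q,r)$. This produces a colour $i\in W$ and subsets $Z_j\subseteq V_j\subseteq U_j$ with $|Z_j|=q$ for $j\in\{2,\ldots,n\}$ such that $\phi_0$ is constantly $i$ on $Z_2\times\cdots\times Z_n$. Then for every $(u_1,z_2,\ldots,z_n)\in Z_1\times Z_2\times\cdots\times Z_n$, we have
$$\Phi(u_1,z_2,\ldots,z_n)=\Phi_{u_1}(z_2,\ldots,z_n)=\phi_0(z_2,\ldots,z_n)=i,$$
so $Z_1,\ldots,Z_n$ and $i$ witness the conclusion. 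Setting $\nu(n,q,r):=\max\{N,\,q\cdot r^{N^{n-1}}\}$ completes the induction.

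The argument is mostly bookkeeping, and the only thing to keep track of is that the recursion yields a finite (though tower-type) bound, which it does since both $N$ and $q\cdot r^{N^{n-1}}$ are finite once $\nu(n-1,q,r)$ has been defined. The main conceptual step, and the one I would expect a reader to get stuck on, is the two-stage application: first use pigeonhole on the colourings of a \emph{bounded} witness box $V_2\times\cdots\times V_n$ to freeze a large set of first coordinates on which the restriction is identical, and only then invoke the inductive hypothesis on the remaining $n-1$ coordinates. This decoupling — choosing the witness box \emph{before} pigeonholing, so that there are only finitely many possible restrictions to pigeonhole over — is the standard recipe for product-Ramsey statements, and replacing it by a direct induction on $u_1\in U_1$ fails because the sets produced by the inductive hypothesis can vary wildly with $u_1$.
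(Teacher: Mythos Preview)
Your proof is correct and follows the standard product-Ramsey induction. Note, however, that the paper does not actually prove this theorem: it is quoted from Graham, Rothschild and Spencer \cite{productramsey} without proof, as a black box to be applied later (in Lemma~\ref{lem:coolramsey}). So there is no in-paper argument to compare against; your write-up simply supplies the omitted textbook proof.
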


For an induced subgraph $H$ of a graph $G$ and a vertex $x\in V(G)$, we denote by $N_H(x)$ the set of all neighbors of $x$ in $H$, and write $N_H[x]=N_H(x)\cup \{x\}$.  Let $U$ be a set and let $a\in \poi\cup \{0\}$. An \textit{$a$-pair over $U$} is a pair $(A,B)$ of subsets of $U$ with $|A|\leq a$. Two $a$-pairs $(A,B), (A',B')$ are said to be \textit{disjoint} if $B\cap B'=\emptyset$.

\begin{lemma}\label{lem:coolramsey}
For all $a,b\in \poi\cup \{0\}$ and $m\in \poi$,  there is a constant $\Upsilon=\Upsilon(a,b,m)\in \poi$ with the following property. Let $G$ be a graph. Let $\mathcal{B}_1,\ldots, \mathcal{B}_m$ be collections of pairwise disjoint $a$-pairs over $V(G)$, each of cardinality at least $\Upsilon$. Then for every $i\in \poi_{m}$, there exists $\mathcal{B}'_i\subseteq \mathcal{B}_i$ with $|\mathcal{B}'_i|\geq b$ such that for all distinct $i,j\in \poi_{m}$, the following hold.
\begin{enumerate}[\rm (a)]
        \item\label{lem:coolramsey_a} 
        We have $A_i\cap B_j=\emptyset$ for all $(A_i,B_i)\in \mathcal{B}'_{i}$ and $(A_j,B_j)\in \mathcal{B}'_{j}$.

  \item\label{lem:coolramsey_b} Either $A_i$ is anticomplete to $B_j$ in $G$ for all $(A_i,B_i)\in \mathcal{B}'_{i}$ and $(A_j,B_j)\in \mathcal{B}'_{j}$, or for every $(A_i,B_i)\in \mathcal{B}'_{i}$, there exists $x_i\in A_i$ such that $x_i$ has a neighbor in $B_j$ for every $(A_j,B_j)\in \mathcal{B}'_{j}$.
    \end{enumerate}
\end{lemma}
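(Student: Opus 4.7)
My plan is to derive Lemma~\ref{lem:coolramsey} from the product Ramsey theorem (Theorem~\ref{productramsey}), exploiting the pairwise disjointness of the $B$'s within each $\mathcal{B}_j$ to collapse the intersection condition (a) automatically. I begin with a pigeonhole refinement: for each $i\in\poi_m$ I restrict $\mathcal{B}_i$ to a subfamily in which $|A|$ takes a single value $a_i\leq a$, losing a factor of $a+1$ in the size, and I fix once and for all an arbitrary total ordering on the elements of $A$ for every pair $(A,B)$ in this subfamily, so that ``the $k$-th element of $A$'' makes sense for each $k\in\poi_{a_i}$.

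Next, I will define a colouring $\Phi:\mathcal{B}_1\times\cdots\times\mathcal{B}_m\to W$ whose value on $((A_1,B_1),\dots,(A_m,B_m))$ records, for every ordered pair $(i,j)$ with $i\neq j$, two subsets of $\poi_{a_i}$: the set $I_{ij}$ of indices $k$ such that the $k$-th element of $A_i$ lies in $B_j$, and the set $T_{ij}$ of indices $k$ such that the $k$-th element of $A_i$ has a neighbor in $B_j$. This gives $|W|\leq 4^{am(m-1)}$. Setting $\Upsilon=(a+1)\,\nu(m,\max(b,2),4^{am(m-1)})$ and applying Theorem~\ref{productramsey} with $n=m$, $q=\max(b,2)$, $r=4^{am(m-1)}$ will furnish subfamilies $\mathcal{B}''_i\subseteq\mathcal{B}_i$ of size $\max(b,2)$ on which $\Phi$ is constant.

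The heart of the argument is then the observation that each $I_{ij}$ must be empty: were some $k$ to lie in $I_{ij}$, then for any fixed $(A_i,B_i)\in\mathcal{B}''_i$ its $k$-th element would have to lie in $B_j$ for \emph{every} $(A_j,B_j)\in\mathcal{B}''_j$, and since the $B_j$'s in $\mathcal{B}_j$ are pairwise disjoint and $|\mathcal{B}''_j|\geq 2$, this is impossible. This yields (a). For (b) I simply inspect the sets $T_{ij}$: if $T_{ij}=\emptyset$, then $A_i$ is anticomplete to $B_j$ on the whole of $\mathcal{B}''_i\times\mathcal{B}''_j$; otherwise, fixing any $k\in T_{ij}$ and letting $x_i$ be the $k$-th element of $A_i$ for each $(A_i,B_i)\in\mathcal{B}''_i$ supplies the required witness. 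Finally, any $b$-element subset $\mathcal{B}'_i\subseteq\mathcal{B}''_i$ (or the empty set if $b=0$) is a valid choice.

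The only conceptually nontrivial step is the forced-empty-intersection observation above; everything else is bookkeeping, and the degenerate cases $b\in\{0,1\}$ are absorbed by choosing $\max(b,2)$ as the Ramsey parameter rather than $b$ itself.
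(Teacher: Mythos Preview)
Your proposal is correct and follows essentially the same route as the paper's proof: a product-Ramsey colouring that records, for each ordered pair $(i,j)$, the index set of elements of $A_i$ lying in $B_j$ and the index set of those with a neighbour in $B_j$, followed by the observation that disjointness of the $B$'s in $\mathcal{B}_j$ together with $|\mathcal{B}''_j|\geq 2$ forces the intersection index set to be empty. The only difference is cosmetic: the paper skips your preliminary pigeonhole on $|A|$ by simply taking $I_1(A,B'),I_2(A,B')\subseteq\poi_{|A|}\subseteq\poi_a$ directly (so the colour space is $\mathds{M}^2$ with $|\mathds{M}|=2^{am^2}$), which saves the factor of $a+1$ but is otherwise identical in spirit.
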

\begin{proof}
    We prove that   $$\Upsilon(a,b,m)=\nu(m,\max\{b,2\},2^{2am^2});$$
    satisfies the theorem, where $\nu(\cdot,\cdot,\cdot)$ is as in Theorem~\ref{productramsey}. Let $\mathcal{B}=\mathcal{B}_1\cup \cdots \cup \mathcal{B}_{m}$.
    For every $a$-pair $(A,B)\in \mathcal{B}$, fix an enumeration $A=\{x^i_A: i\in \poi_{|A|}\}$ of the elements of $A$; recall that $|A|\leq a$.
    For every two $a$-pairs $(A,B),(A',B')\in \mathcal{B}$, let 
    $$I_1(A,B')=\{i\in \poi_{|A|}: x_A^i\in  B'\}\subseteq \poi_{a};$$
 $$I_2(A,B')=\{i\in \poi_{|A|}: N_G(x_A^i)\cap B'\neq \emptyset\}\subseteq \poi_{a}.$$
    
Let $\mathds{M}$ be the set of all $m$-by-$m$ matrices whose entries are subsets of $\poi_{a}$; so we have $|\mathds{M}|=2^{am^2}$. Consider the product $\Pi=\mathcal{B}_1\times \cdots \times \mathcal{B}_m$. For every $z=((A_1,B_1), \cdots, (A_m, B_m))\in \Pi$, define $M_1(z),M_2(z)\in \mathds{M}$ such that for all $i,j\in \poi_{m}$, we have
$$[M_1(z)]_{ij}=I_1(A_i,B_j);$$
$$[M_2(z)]_{ij}=I_2(A_i,B_j).$$
It follows that for every $z\in \Pi$, $M_1(z), M_2(z)$ are unique, and so the map
$\Phi:\Pi\rightarrow \mathds{M}^2$ with $\Phi(z)=(M_1(z),M_2(z))$ is well-defined. This, along with the choice of $\Upsilon$ and Theorem~\ref{productramsey}, implies that there exists $\mathcal{B}'_i\subseteq \mathcal{B}_i$ with $|\mathcal{B}_i|\geq \max\{b,2\}\geq 2$ for each $i\in \poi_{m}$, as well as $M_1,M_2\in \mathds{M}$, such that for every $z\in \mathcal{B}'_1\times \cdots \times \mathcal{B}'_m$, we have $M_1(z)=M_1$ and $M_2(z)=M_2$. Moreover, we deduce:

\sta{\label{st:disjointramsey}  Let $i,j\in \poi_{m}$ be distinct. Then we have $A_i\cap B_j=\emptyset$ for all $(A_i,B_i)\in \mathcal{B}'_{i}$ and $(A_j,B_j)\in \mathcal{B}'_{j}$}

Suppose for a contradiction that there are distinct $i,j\in \poi_{m}$ such that for some $(A_i,B_i)\in \mathcal{B}'_i$ and $(A_j,B_j)\in \mathcal{B}'_j$, we have $A_i\cap B_j\neq \emptyset$. Then we have $I_1(A_i,B_j)\neq \emptyset$. Also, since $|\mathcal{B}'_j|\geq\max\{b,2\}\geq 2$, we may choose $(A'_j,B'_j)\in \mathcal{B}'_j\setminus \{(A_j,B_j)\}$. It follows that $I_1(A_i,B_j)=[M_1]_{ij}=I_1(A_i,B'_j)$ is non-empty. But then $B_j\cap B'_j\neq \emptyset$, a contradiction with the assumption that $(A_j,B_j),(A'_j,B'_j)\in \mathcal{B}'_j\subseteq \mathcal{B}_j$ are disjoint. This proves \eqref{st:disjointramsey}.

\sta{\label{st:antiornot} Let $i,j\in \poi_{m}$ be distinct. Then either $A_i$ is anticomplete to $B_j$ in $G$ for all $(A_i,B_i)\in \mathcal{B}'_{i}$ and $(A_j,B_j)\in \mathcal{B}'_{j}$, or for every $(A_i,B_i)\in \mathcal{B}'_{i}$, there exists $x_i\in A_i$ such that $x_i$ has a neighbor in $B_j$ for every $(A_j,B_j)\in \mathcal{B}'_{j}$.}

Note that for all $(A_i,B_i)\in \mathcal{B}'_{i}$ and $(A_j,B_j)\in \mathcal{B}'_{j}$, we have $I_2(A_i,B_j)=[M_2]_{ij}\subseteq \poi_{a}$. If $[M_2]_{ij}=\emptyset$, then $A_i$ is anticomplete to $B_j$ in $G$ for all $(A_i,B_i)\in \mathcal{B}'_{i}$ and $(A_j,B_j)\in \mathcal{B}'_{j}$. Otherwise, one may choose $k\in [M_2]_{ij}$, and so for each $(A_i,B_i)\in \mathcal{B}'_{i}$, the vertex $x_i=x^k_{A_i}\in A_i$ has a neighbor in $B_j$ for every $(A_j,B_j)\in \mathcal{B}'_{j}$. This proves \eqref{st:antiornot}.

\medskip

Now the result follows from \eqref{st:disjointramsey} and \eqref{st:antiornot}. This completes the proof of Lemma~\ref{lem:coolramsey}.
\end{proof}

\section{Blocks}\label{sec:blocks}
This section collects several results from the literature about ``blocks'' in $t$-clean graphs of large treewidth. We begin with a couple of definitions. Given a set $X$ and $q\in \poi\cup \{0\}$, we denote 
by $2^X$ the power set of $X$ and by $\binom{X}{q}$ the set of all $q$-subsets of $X$. Let $G$ be a graph.
For a collection $\mathcal{P}$ of paths in $G$, we adopt the notation $V(\mathcal{P})=\bigcup_{P\in \mathcal{P}}V(P)$ and $\mathcal{P}^*=\bigcup_{P\in \mathcal{P}}P^*$.
Let $k\in \poi$. A \textit{$k$-block} in $G$ is a set $B$ of at least $k$ vertices in $G$ such that for every $2$-subset $\{x,y\}$ of $B$, there exists a collection $\mathcal{P}_{\{x,y\}}$ of $k$ pairwise internally disjoint paths in $G$ from $x$ to $y$. In addition, we say $B$ is \textit{strong} if the collections $\{\mathcal{P}_{\{x,y\}}:\{x,y\}\subseteq B\}$ can be chosen such that for all distinct $2$-subsets $\{x,y\}, \{x',y'\}$ of $B$, we have $V(\mathcal{P}^*_{\{x,y\}})\cap V(\mathcal{P}_{\{x',y'\}})=\emptyset$. In \cite{twvii}, with Abrishami we proved the following:

\begin{theorem}[Abrishami, Alecu, Chudnovsky, Hajebi and Spirkl \cite{twvii}]\label{thm:tw7block}
  For all $k,t\in \poi$,  there is a constant $\kappa=\kappa(k,t)\in \poi$ such that every $t$-clean graph of treewidth more than $\kappa$ contains a strong $k$-block.
\end{theorem}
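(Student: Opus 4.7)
My plan is to reduce the proof to first producing a large ordinary $k'$-block and then refining it to a strong one by a Ramsey-style argument that invokes $t$-cleanness. As a starting point I would invoke the classical fact from graph-minors theory (essentially due to Seymour--Thomas) that for every $k' \in \poi$ there is a function $g(k')$ so that any graph of treewidth at least $g(k')$ contains a $k'$-block of cardinality at least $k'$; this is the bramble-to-block translation via Menger's theorem. Applying this for $k'$ sufficiently large in terms of $k$ and $t$ yields in $G$ a $k'$-block $B$ whose size far exceeds $k$.

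Given such a block $B$, for every $\{x,y\} \in \binom{B}{2}$ fix a family $\mathcal{P}_{\{x,y\}}$ of $k'$ pairwise internally disjoint paths from $x$ to $y$. The plan is now to prune $B$ down to a sub-block $B_0$ of cardinality $k$ inside which, for every pair, a sub-collection of $k$ of the chosen paths can be selected whose interiors are disjoint from all other block vertices and from every other chosen path system. To achieve this I would encode, for each ordered pair of $2$-subsets $\{x,y\}, \{x',y'\} \subseteq B$, a bounded-size \emph{interaction type} that records qualitative incidence data: how many paths of $\mathcal{P}_{\{x,y\}}$ pass through $x'$ or $y'$, how many meet $V(\mathcal{P}_{\{x',y'\}})$, and which vertices of $\mathcal{P}_{\{x,y\}}$ are involved in those meetings. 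A product-Ramsey argument of the kind supplied by Theorem~\ref{productramsey} then extracts a sub-block $B_0 \subseteq B$ of size $k$ that is homogeneous in this interaction type.

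On this homogeneous sub-block two regimes arise. In the \emph{sparse regime} only a bounded number of paths from each $\mathcal{P}_{\{x,y\}}$ participate in any crossing with another pair's system, and discarding these bad paths leaves at least $k$ "private" paths per pair, certifying that $B_0$ is strong. In the \emph{dense regime} each pair of path systems intersects the next substantially, and here I would exploit $t$-cleanness. By pigeonhole many paths from many different systems must congregate through a common bounded region; extracting short sub-paths through that region one can then either produce a $K_{t+1}$ or a $K_{t,t}$ (when the congestion occurs in a small vertex neighborhood), or else, by laying the crossings out in a bipartite-style grid, refine a subdivided $t$-by-$t$ wall or its line-graph. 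Any of these contradicts the hypothesis that $G$ is $t$-clean, so the dense regime cannot occur.

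The main obstacle, as usual with such arguments, is the execution of the dense case: turning "many path systems crossing one another" into a clean topological minor demands careful accounting of how each path is cut and re-spliced, and a case analysis that at every step rules out configurations incompatible with $t$-cleanness. Once this analysis is completed and the Ramsey and routing parameters are balanced, one obtains $\kappa(k,t)$ as a (tower-type) function of $k$, $t$, $g$, and the Ramsey bounds, which settles the statement.
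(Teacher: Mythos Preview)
The theorem you are attempting to prove is not proved in this paper at all: it is quoted verbatim from \cite{twvii} and used as a black box. There is therefore no ``paper's own proof'' to compare your proposal against.

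That said, your sketch has a real gap in the dense regime. You assert that if many path systems cross one another substantially, then one can extract a $K_{t+1}$, a $K_{t,t}$, a subdivided wall, or its line-graph, but you give no mechanism for this. ``Congestion in a small vertex neighborhood'' does not by itself yield $K_{t+1}$ or $K_{t,t}$ as \emph{induced} subgraphs, and ``laying the crossings out in a bipartite-style grid'' to obtain an induced subdivided wall is precisely the hard part of any such argument; waving at it does not constitute a plan. The actual proof in \cite{twvii} proceeds quite differently: rather than dichotomizing into sparse and dense interaction types, it builds the strong block through a more structural route involving separations and careful path re-routing, and the role of $t$-cleanness is not to rule out a Ramsey branch but to control the geometry of the block from the outset. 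If you want to flesh your approach into a genuine proof, the dense case would require substantial new ideas, not just accounting.
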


Given a graph $G$ and $d,k\in \poi$, we say a (strong) $k$-block $B$ in $G$ is \textit{$d$-short} if for every $2$-subset $\{x,y\}\subseteq B$ of $G$, every path $P\in \mathcal{P}_{\{x,y\}}$ is of length at most $d$. The following shows that large enough short blocks contain strong (and short) ``sub-blocks.''

\begin{theorem}\label{thm:short}
For all $d,k\in \poi$,  there is a constant $\upsilon=\upsilon(d,k)$ with the following property. Let $G$ be a graph and let $B$ be a $d$-short $\upsilon$-block in $G$. Then every $k$-subset $B'$ of  $B$ is a $d$-short strong $k$-block in $G$.
\end{theorem}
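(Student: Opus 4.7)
The plan is to construct, for an arbitrary $k$-subset $B'\subseteq B$, the required system of path collections $\{\mathcal{P}_{\{x,y\}}:\{x,y\}\in\binom{B'}{2}\}$ greedily, processing the $\binom{k}{2}$ pairs in a fixed order $\{x_1,y_1\},\ldots,\{x_q,y_q\}$ and simply ensuring at each step not to tread on any vertex we have already used. Concretely, I would set $\upsilon(d,k)=2k+\binom{k}{2}kd$ (loose, but sufficient). Having selected $\mathcal{P}_1,\ldots,\mathcal{P}_{j-1}$, define the forbidden set
$$S_j=(B'\setminus\{x_j,y_j\})\cup\bigcup_{i<j}V(\mathcal{P}_i^*),$$
so that $|S_j|\leq(k-2)+\binom{k}{2}k(d-1)\leq\upsilon-k$. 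The goal at step $j$ is to pick $\mathcal{P}_j$ to be a family of $k$ pairwise internally disjoint paths from $x_j$ to $y_j$, each of length at most $d$, with all internal vertices lying outside $S_j$.

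The key step is observing that this choice is always possible. Since $B$ is a $d$-short $\upsilon$-block and $x_j,y_j\in B$, there exists a collection $\mathcal{Q}$ of $\upsilon$ pairwise internally disjoint paths from $x_j$ to $y_j$, each of length at most $d$. Because the paths in $\mathcal{Q}$ are pairwise internally disjoint, each vertex of $S_j$ can lie in the interior of at most one path in $\mathcal{Q}$; hence at most $|S_j|$ paths in $\mathcal{Q}$ have an internal vertex in $S_j$, and at least $\upsilon-|S_j|\geq k$ of them have interiors disjoint from $S_j$. I would let $\mathcal{P}_j$ be any $k$ such paths, which are automatically pairwise internally disjoint as a sub-collection of $\mathcal{Q}$.

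Finally, I would verify that the system $\mathcal{P}_1,\ldots,\mathcal{P}_q$ witnesses that $B'$ is a $d$-short strong $k$-block. The $d$-short $k$-block conditions are built in at each step. For the strong condition, fix distinct pairs with $i<j$: the interior of each path in $\mathcal{P}_j$ avoids $V(\mathcal{P}_i^*)$ by the step-$j$ construction and avoids $\{x_i,y_i\}\subseteq B'\setminus\{x_j,y_j\}\subseteq S_j$, yielding $V(\mathcal{P}_j^*)\cap V(\mathcal{P}_i)=\emptyset$; symmetrically, the interior of each path in $\mathcal{P}_i$ avoids $V(\mathcal{P}_j^*)$ (just shown) and avoids $\{x_j,y_j\}$ (required at step $i$), giving $V(\mathcal{P}_i^*)\cap V(\mathcal{P}_j)=\emptyset$. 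The argument is essentially bookkeeping; the one structural ingredient is the simple fact that a single vertex can be internal to at most one path in a pairwise internally disjoint collection, so I do not anticipate any serious obstacle beyond confirming the numerical bound on $\upsilon$.
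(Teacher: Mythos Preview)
Your greedy argument is correct and gives a genuinely different, more elementary proof than the paper's. The paper instead sets $\upsilon=\Upsilon(d+1,k,\binom{k}{2})$ (the constant from Lemma~\ref{lem:coolramsey}), encodes each path $P\in\mathcal{P}_{\{x,y\}}$ as the $(d+1)$-pair $(P,P^*)$, and applies that Ramsey-type lemma to all $\binom{k}{2}$ collections simultaneously to extract sub-collections $\mathcal{Q}_{\{x,y\}}$ of size $k$ for which conclusion~(a) of the lemma directly yields $V(\mathcal{Q}^*_{\{x,y\}})\cap V(\mathcal{Q}_{\{x',y'\}})=\emptyset$. Your approach avoids the product-Ramsey machinery entirely and produces an explicit polynomial bound $\upsilon=O(k^3 d)$, whereas the paper's bound passes through $\nu(\cdot,\cdot,\cdot)$ and is enormous; on the other hand, the paper's route is a one-line invocation of a lemma it already needs elsewhere, so your saving does not propagate to the main theorem.

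One small write-up issue: in verifying the strong condition you assert $\{x_i,y_i\}\subseteq B'\setminus\{x_j,y_j\}$ (and symmetrically $\{x_j,y_j\}\subseteq B'\setminus\{x_i,y_i\}$), which fails when the two pairs share a vertex. The fix is immediate---if $v\in\{x_i,y_i\}\cap\{x_j,y_j\}$ then $v$ is an endpoint of every path in $\mathcal{P}_j$ and of every path in $\mathcal{P}_i$, hence lies in neither $V(\mathcal{P}_j^*)$ nor $V(\mathcal{P}_i^*)$---but you should say this explicitly rather than hide it behind an incorrect set inclusion.
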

\begin{proof}
Let $\upsilon=\upsilon(d,k)=\Upsilon(d+1,k,\binom{k}{2})$, where $\Upsilon(\cdot,\cdot,\cdot)$ is as in Lemma~\ref{lem:coolramsey}. Let $B_0$ be a $d$-short $\upsilon$-block in $G$, and let $B\subseteq B_0$ with $|B|=k$. It follows that for every $2$-subset $\{x,y\}$ of $B$, there is a collection $\mathcal{P}_{\{x,y\}}$ of $\upsilon$ pairwise internally disjoint paths in $G$, each of length at most $d$. Let $\mathcal{B}_{\{x,y\}}=\{(P,P^*):P\in \mathcal{P}_{\{x,y\}}\}$. Then $\mathcal{B}_{\{x,y\}}$ is a collection of $\upsilon$ pairwise disjoint $(d+1)$-pairs over $V(G)$. Thus, the choice of $\upsilon$ allows for an application of Lemma~\ref{lem:coolramsey} to the collections $\{\mathcal{B}_{\{x,y\}}:\{x,y\}\in \binom{B}{2}\}$. We deduce that for every $\{x,y\}\in \binom{B}{2}$, there exists $\mathcal{Q}_{\{x,y\}}\subseteq \mathcal{P}_{\{x,y\}}$ with $|\mathcal{Q}_{\{x,y\}}|\geq k$, such that for all distinct $\{x,y\},\{x',y'\}\in \binom{B}{2}$, the collections $\mathcal{B}'_{\{x,y\}}=\{(P,P^*):P\in \mathcal{Q}_{\{x,y\}}\}\subseteq \mathcal{B}_{\{x,y\}}$ and $\mathcal{B}'_{\{x',y'\}}=\{(P,P^*):P\in \mathcal{Q}_{\{x',y'\}}\}\subseteq \mathcal{B}_{\{x',y'\}}$  satisfy the outcomes of Lemma~\ref{lem:coolramsey}. In particular, for all distinct $\{x,y\},\{x',y'\}\in \binom{B}{2}$, it follows from Lemma~\ref{lem:coolramsey}\ref{lem:coolramsey_a} that for every $P\in \mathcal{Q}_{\{x,y\}}$ and every $P'\in \mathcal{Q}_{\{x',y'\}}$, we have $P^*\cap P'=\emptyset$. Equivalently, we have $V(\mathcal{Q}^*_{\{x,y\}})\cap V(\mathcal{Q}_{\{x',y'\}})=\emptyset$. Hence, $B$ is a $d$-short strong $k$-block in $G$ with respect to $\{\mathcal{Q}_{\{x,y\}}:\{x,y\}\in \binom{B}{2}\}\}$. This completes the proof of Theorem~\ref{thm:short}.
\end{proof}

Recall that a \textit{subdivision} of a graph $H$ is a graph $H'$ obtained from $H$ by replacing the edges of $H$ with
pairwise internally disjoint paths of non-zero lengths between the corresponding ends. Let $r\in \poi\cup \{0\}$. An \textit{$(\leq r)$-subdivision of} $H$ is a subdivision of $H$
in which the path replacing each edge has length at most $r+1$. Also, a \textit{proper subdivision} of $H$ is a subdivision of $H$
in which the path replacing each edge has length at least two. We need the following immediate corollary of a result of \cite{dvorak}:

\begin{theorem}[Dvo\v{r}\'{a}k \cite{dvorak}]\label{dvorak}
For every graph $H$ and all $d\in \poi\cup \{0\}$ and $t\in \poi$,  there is a constant $m=m(H,d,t)\in \poi$ with the following property. Let $G$ be a graph with no induced subgraph isomorphic to a subdivision of $H$. Assume that $G$ contains a $(\leq d)$-subdivision of $K_m$ as a subgraph. Then $G$ contains either $K_{t+1}$ or $K_{t,t}$.
\end{theorem}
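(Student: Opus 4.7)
The plan is to start with the given $(\leq d)$-subdivision of $K_m$ in $G$, with branch vertices $v_1,\ldots,v_m$ and subdivision paths $P_{ij}$ (each of length at most $d+1$), and to refine it via a sequence of Ramsey-type reductions into an induced subdivision of $H$. First, coloring each pair $\{i,j\}$ by the length $\ell_{ij}\in\poi_{d+1}$ of $P_{ij}$ and applying Ramsey's theorem would yield a branch subset $V_1$ of size $m_1$ on which all paths $P_{ij}$ share a common length $\ell\leq d+1$. This allows a consistent labelling of the $\ell-1$ internal vertices of each $P_{ij}$ by position.

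Next, I would record, for each triple $\{i,j,k\}\subseteq V_1$, the set of positions at which $v_k$ has a neighbor on $P_{ij}$, and for each $4$-element subset $\{i,j,i',j'\}\subseteq V_1$, the set of position pairs at which $P_{ij}$ and $P_{i'j'}$ share a chord. Since $\ell$ is bounded in terms of $d$, each such record is a subset of a bounded set, so there are boundedly many possible ``types''. An iterated hypergraph Ramsey argument then produces a further subset $V_2\subseteq V_1$ of size $m_2$ on which every type is constant, with $m_2\to\infty$ as $m\to\infty$.

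On $V_2$, the hypothesis that $G$ is $(K_{t+1},K_{t,t})$-free would force every constant type to be empty. Indeed, if the constant triple-type contained a position $p$ at which every non-incident $v_k$ had a neighbor on $P_{ij}$, then each pair $\{i,j\}\subseteq V_2$ would contribute an internal vertex $u_{ij}\in A_{ij}:=V(P_{ij}^{*})$ adjacent to all of $V_2\setminus\{v_i,v_j\}$; as the $u_{ij}$ are pairwise distinct (paths in a subdivision are pairwise internally disjoint), taking any $t$ of them together with $t$ of their common neighbors in $V_2$ would exhibit a $K_{t,t}$, a contradiction. A symmetric argument, together with Lemma~\ref{ramsey2} applied to the pairwise disjoint sets $A_{ij}$ of size at most $d$, rules out non-trivial $4$-tuple-types. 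Hence the $(\leq d)$-subdivision of $K_{m_2}$ spanned by $V_2$ is in fact induced in~$G$.

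Finally, by choosing $m$ large enough that $m_2\geq|V(H)|$, one earmarks $|V(H)|$ of the branch vertices of this induced $(\leq d)$-subdivision of $K_{m_2}$ (one per vertex of $H$), and keeps the paths joining those pairs that correspond to edges of $H$; the result is an induced subdivision of $H$ in $G$. The main obstacle will be controlling the quantitative output of the hypergraph Ramsey reductions so that $m_2$ remains large as a function of $m$, and carefully translating each non-empty uniform type into a genuine induced $K_{t+1}$ or $K_{t,t}$ using the disjointness of the $A_{ij}$ and Lemma~\ref{ramsey2}; this is precisely the role played by the hypothesis of $(K_{t+1},K_{t,t})$-freeness.
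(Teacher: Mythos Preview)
The paper does not give its own proof of this statement: it is quoted from Dvo\v{r}\'{a}k~\cite{dvorak} and used as a black box, so there is nothing in the paper to compare your argument against.

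Your overall strategy (uniformise the bounded-length $K_m$-subdivision by hypergraph Ramsey on ``types'', then show that any surviving nonempty type yields $K_{t+1}$ or $K_{t,t}$) is the right one, but as written it has real gaps. The catalogue of types you record is incomplete. You only control (i) edges from a branch vertex $v_k$ into a non-incident path $P_{ij}$ and (ii) edges between two vertex-disjoint paths $P_{ij},P_{kl}$. You never control edges between two paths that \emph{share} a branch vertex, i.e.\ between $P_{ij}^{*}$ and $P_{ik}^{*}$; you never control adjacencies among the branch vertices $v_i$ themselves (so for $\ell\geq 2$ you cannot conclude $V_2$ is stable); and you never ensure that each $P_{ij}$ is an induced path in $G$ (either by shortcutting to shortest paths first, or by adding an in-path chord type). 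With any of these left uncontrolled, the structure on $V_2$ need not be an \emph{induced} subdivision, and your final step fails. All three omissions are repairable by the same mechanism (enlarge the pair- and triple-types and rerun Ramsey), but they must be addressed.

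Second, in the triple-type step you claim that taking $t$ of the $u_{ij}$ together with $t$ common neighbours in $V_2$ ``would exhibit a $K_{t,t}$''. What you actually obtain is a large complete bipartite \emph{subgraph}; since in this paper ``contains'' means induced, you still owe one more Ramsey step (Theorem~\ref{classicalramsey}) on each side, using $K_{t+1}$-freeness, to extract stable sets of size $t$ and hence an induced $K_{t,t}$. Your appeal to Lemma~\ref{ramsey2} for the $4$-type is legitimate (apply it to $\{P_{2a-1,2a}^{*}\}_a$), but note that the missing shared-endpoint case requires its own instance of the same trick.
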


From Theorems~\ref{thm:short} and \ref{dvorak} together, we deduce that:

\begin{theorem}\label{thm:noshort}
For all $d,t\in \poi$,  there is a constant $\eta=\eta(d,t)\in \poi$ with the following property. Let $G$ be a $t$-clean graph. Then there is no $d$-short $\eta$-block in $G$.
\end{theorem}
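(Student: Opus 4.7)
The plan is to obtain Theorem~\ref{thm:noshort} by assembling Theorem~\ref{thm:short} with Theorem~\ref{dvorak}, exploiting the fact that, by definition, a $t$-clean graph contains no induced subgraph isomorphic to a subdivision of $W_{t\times t}$ (since such subdivisions are $t$-basic obstructions), and also contains neither $K_{t+1}$ nor $K_{t,t}$. Concretely, I would set $m=m(W_{t\times t},d,t)$ as provided by Theorem~\ref{dvorak} applied with $H=W_{t\times t}$, and then take $\eta(d,t)=\upsilon(d,m)$ from Theorem~\ref{thm:short}.

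Arguing by contradiction, suppose $G$ is a $t$-clean graph containing a $d$-short $\eta$-block $B_0$. Any $m$-subset $B\subseteq B_0$ is, by Theorem~\ref{thm:short}, a $d$-short strong $m$-block in $G$, witnessed by collections $\{\mathcal{P}_{\{x,y\}}:\{x,y\}\in\binom{B}{2}\}$, each consisting of $m$ pairwise internally disjoint paths of length at most $d$, and satisfying the strong disjointness $V(\mathcal{P}^*_{\{x,y\}})\cap V(\mathcal{P}_{\{x',y'\}})=\emptyset$ for distinct pairs. I would then pick an arbitrary representative $P_{\{x,y\}}\in\mathcal{P}_{\{x,y\}}$ for each pair; the strong property ensures that the interiors $P^*_{\{x,y\}}$ are pairwise vertex-disjoint and also disjoint from $B$, so that $B\cup\bigcup_{\{x,y\}}P^*_{\{x,y\}}$ carries a $(\leq d)$-subdivision of $K_m$ as a subgraph of $G$.

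Since no induced subgraph of $G$ is isomorphic to a subdivision of $W_{t\times t}$, Theorem~\ref{dvorak} then forces $G$ to contain $K_{t+1}$ or $K_{t,t}$, contradicting the $t$-cleanness of $G$ and finishing the proof. I do not anticipate any substantive obstacle: the statement is essentially a bookkeeping corollary of Theorems~\ref{thm:short} and~\ref{dvorak}. The only minor point worth checking is the compatibility of indices: paths of length at most $d$ formally yield a $(\leq d-1)$-subdivision (since an $(\leq r)$-subdivision allows paths of length up to $r+1$), which is in particular a $(\leq d)$-subdivision, so Theorem~\ref{dvorak} applies with the parameter $d$ as chosen above.
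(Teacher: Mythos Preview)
Your proposal is correct and follows essentially the same route as the paper: set $m=m(W_{t\times t},d,t)$ from Theorem~\ref{dvorak}, pass to a strong $m$-block via Theorem~\ref{thm:short}, and read off a $(\leq d)$-subdivision of $K_m$ from one representative path per pair to reach a contradiction. The only cosmetic discrepancy is that the paper takes $\eta=\max\{\upsilon(d,m),m\}$ rather than your $\eta=\upsilon(d,m)$, so that the existence of an $m$-subset of the block is immediate; with your choice this still works since $\upsilon(d,m)\geq m$ by construction, but you might state that explicitly.
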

\begin{proof}
Let $m=m(W_{t\times t},d,t)\in \poi$ be as in Theorem~\ref{dvorak}. We show that  $\eta=\eta(d,t)=\max\{\upsilon(d,m), m\}$ satisfies the theorem, where $\upsilon(\cdot,\cdot)$ is as in Theorem~\ref{thm:short}. Let $G$ be a $t$-clean graph; that is, $G$ has no induced subgraph isomorphic to a $t$-basic obstruction, and in particular a subdivision of $W_{t\times t}$. Suppose for a contradiction that there is a $d$-short $\eta$-block $B$ in $G$. Since $|B|\geq m$, we may choose $B'\subseteq B$ with $|B'|=m$. It follows from Theorem~\ref{thm:short} that $B'$ is a $d$-short strong $m$-block in $G$. For every $2$-subset $\{x,y\}$ of $B'$, let $\mathcal{P}_{\{x,y\}}$ be a collection of $m\in \poi$ pairwise internally disjoint paths in $G$ from $x$ to $y$ as in the definition of a strong $m$-block, and fix a path $P_{\{x,y\}}\in \mathcal{P}_{\{x,y\}}$. Now the union of the paths $P_{\{x,y\}}$ for all $\{x,y\}\in \binom{B'}{2}$ forms a subgraph of $G$ isomorphic to a $(\leq d)$-subdivision of $K_m$. This, together with the choice of $m$ and the assumption that $G$ is $t$-clean, violates Theorem~\ref{dvorak}. This contradiction completes the proof of Theorem~\ref{thm:noshort}.
\end{proof}

\section{Obtaining complete bipartite minor models}\label{sec:model}
The main result of this section, Theorem~\ref{thm:grid}, shows that $t$-clean graphs of sufficiently large treewidth contain large complete bipartite minor models in which every ``branch set'' is a path, such that for every branch set $P$ on at least two vertices, each vertex in $P$ has neighbors in only a small number of branch sets in the ``opposite side'' of the bipartition. The exact statement of \ref{thm:grid} is somewhat technical and involves a number of definitions which we give below.

Let $G$ be a graph. For $w\in \poi$, a \textit{$w$-polypath in $G$} is a set $\mathcal{W}$ of $w$ pairwise disjoint paths in $G$. Let $\mathcal{W}$ be a $w$-polypath in $G$. For $d\in \poi$, we say $\mathcal{W}$ is \textit{$d$-loose} if for every $W\in \mathcal{W}$, each vertex $v\in W$ has neighbors (in $G$) in fewer than $d$ paths in $\mathcal{W}\setminus \{W\}$ (this is particular implies that a vertex of ``large degree'' has ``most'' of its neighbors on just one path; we will use this property extensively in Section~\ref{sec:turbinconst}). Also, for $w'\in \poi_{w}$, we say $\mathcal{W}$ is \textit{$w'$-fancy} if there exists $\mathcal{W}'\subseteq \mathcal{W}$ with $|\mathcal{W}'|=w'$ such that for every $W'\in \mathcal{W}'$ and every $W\in \mathcal{W}\setminus \mathcal{W}'$, $W'$ is not anticomplete to $W$ in $G$. It follows that if $\mathcal{W}$ is a $w$-polypath in $G$ which is $w'$-fancy, then $G[V(\mathcal{W})]$ has a $K_{w',w-w'}$-minor (see Figure~\ref{fig:polypathandconstellation}). 

For $s,l\in \poi$, an \textit{$(s,l)$-cluster in $G$} is a pair $(S,\mathcal{L})$ where $S\subseteq V(G)$ with $|S|=s$ and $\mathcal{L}$ is an $l$-polypath in $G\setminus S$, such that every vertex $x\in S$ has at least one neighbor in every path $L\in \mathcal{L}$. If $l=1$, say $\mathcal{L}=\{L\}$, we also denote the $(s,1)$-cluster $(S,\mathcal{L})$ by the pair $(S,L)$. For $d\in \poi$, we say $(S,\mathcal{L})$ is \textit{$d$-meager} if every vertex in $V(\mathcal{L})$ has fewer than $d$ neighbors in $S$. Again, it is easily seen that if $(S,\mathcal{L})$ is an $(s,l)$-cluster in $G$, then $G[S\cup V(\mathcal{L})]$ has a $K_{s,l}$-minor (see Figure~\ref{fig:polypathandconstellation}).

    \begin{figure}[t!]
        \centering
\includegraphics[scale=0.6]{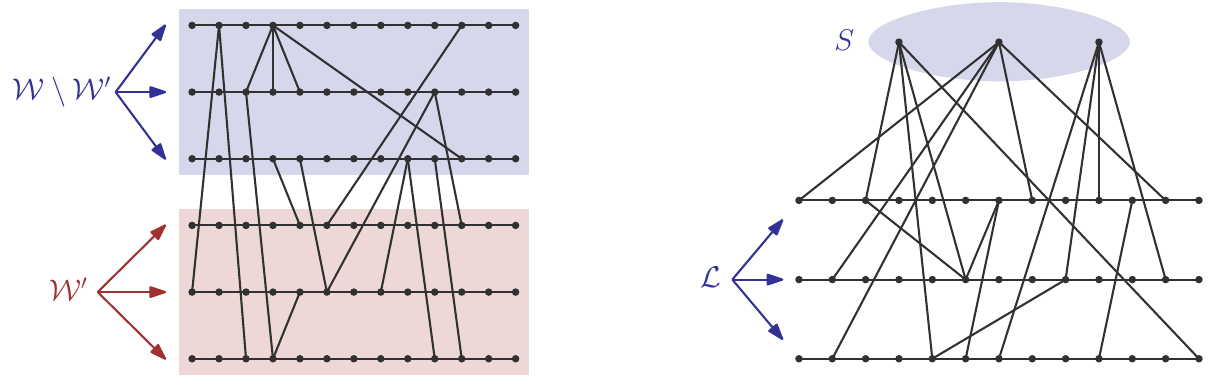}
        \caption{A $6$-polypath $\mca{W}$ which is $3$-fancy and $2$-loose (left) and a $(3,3)$-constellation $(S,\mca{L})$ which is $2$-ample (right).}
    \label{fig:polypathandconstellation}
    \end{figure}

Our goal is to prove:

\begin{theorem}\label{thm:grid}
    For all $c,l,s,t,w\in \poi$,  there is a constant $\gamma=\gamma(c,l,s,t,w)\in \poi$ with the following property. Let $G$ be a $t$-clean graph of treewidth more than $\gamma$. Then one of the following holds.
    \begin{enumerate}[\rm (a)]
        \item\label{thm:grid_a} There exists a $t^t$-meager $(s,l)$-cluster in $G$.
        \item\label{thm:grid_b} There exists a $2w$-polypath in $G$ which is both $w$-fancy and $(l+t^ts^{t^t})$-loose.
    \end{enumerate}
\end{theorem}

The proof of Theorem~\ref{thm:grid} is split into a number of lemmas. To begin with, we need the following multicolor version of Ramsey's Theorem for (uniform) hypergraphs.

\begin{theorem}[Ramsey \cite{multiramsey}]\label{multiramsey}
For all $n,q,r\in \poi$,  there is a constant $\rho(n,q,r)\in \poi$ with the following property. Let  $U$ be a set of cardinality at least $\rho(n,q,r)$ and let $W$ be a non-empty set of cardinality at most $r$. Let $\Phi:\binom{U}{q}\rightarrow W$ be a map. Then there exist $i\in W$ and $Z\subseteq U$ with $|Z|=n$ such that for every $A\in \binom{Z}{q}$, we have $\Phi(A)=i$.
\end{theorem}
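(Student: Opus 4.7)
The plan is to give the standard inductive proof of the multicolor hypergraph Ramsey theorem. I would proceed by induction on $q$, keeping $r$ arbitrary and letting the bound on $|U|$ grow as needed.

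For the base case $q = 1$, the statement is just the multicolor pigeonhole principle: a map from $U$ to a set of size at most $r$ must send at least $\lceil |U|/r \rceil$ elements to a common image. Setting $\rho(n,1,r) = r(n-1)+1$ suffices.

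For the inductive step with $q \geq 2$, I would assume $\rho(\cdot, q-1, \cdot)$ already exists and build a long ``nested greedy sequence.'' Concretely, I set some target length $N$ (to be determined shortly) and construct elements $u_1, u_2, \ldots, u_N \in U$ and a decreasing chain of subsets $U = U_0 \supsetneq U_1 \supsetneq \cdots \supsetneq U_N$ and colors $c_1, \ldots, c_N \in W$ as follows. At step $i$, pick any $u_i \in U_{i-1}$, and consider the coloring $\Phi_i : \binom{U_{i-1}\setminus\{u_i\}}{q-1} \to W$ defined by $\Phi_i(A) = \Phi(A \cup \{u_i\})$. Provided $|U_{i-1}| - 1$ is at least $\rho(m_i, q-1, r)$ for some appropriately chosen $m_i$, the inductive hypothesis yields a subset $U_i \subseteq U_{i-1}\setminus\{u_i\}$ of size $m_i$ and a color $c_i \in W$ such that $\Phi(A \cup \{u_i\}) = c_i$ for every $A \in \binom{U_i}{q-1}$. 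Choosing $m_i$ backwards so that $m_N = n$ and $m_{i-1} \geq \rho(m_i, q-1, r) + 1$ ensures that the process can continue for $N = r(n-1)+1$ steps, which forces $\rho(n, q, r)$ to be the resulting (tower-type) bound on $|U_0|$.

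After the construction, the key property is that whenever $i_1 < i_2 < \cdots < i_q$, the set $\{u_{i_1}, \ldots, u_{i_q}\}$ is contained in $\{u_{i_1}\} \cup U_{i_1}$, so $\Phi(\{u_{i_1}, \ldots, u_{i_q}\}) = c_{i_1}$. Thus the color of any $q$-subset of $\{u_1, \ldots, u_N\}$ depends only on the index of its smallest element. Finally, since $N = r(n-1)+1$, the pigeonhole principle (applied to the colors $c_1, \ldots, c_{N-q+1}$, say) produces $n$ indices $i_1 < \cdots < i_n$ with a common color $c^*$, and then $Z = \{u_{i_1}, \ldots, u_{i_n}\}$ together with $i = c^*$ has the desired monochromatic property.

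The argument is essentially routine — the only mildly delicate point is the bookkeeping of the recursive bounds $m_i$ and the resulting (Ackermann-style) size of $\rho(n,q,r)$, but since the statement does not ask for any explicit bound, this causes no real obstacle.
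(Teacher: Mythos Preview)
Your proof is correct and is indeed the standard inductive argument for the multicolour hypergraph Ramsey theorem. Note, however, that the paper does not prove this statement at all: it is quoted as a classical result of Ramsey and used as a black box, so there is no ``paper's own proof'' to compare against. One small remark: the bounds produced by your recursion are tower-type (an exponential tower of height roughly $q$), not Ackermann-type; and the choice $m_N=n$ is harmless but unnecessary, since after extracting $u_1,\ldots,u_N$ you never use $U_N$ again.
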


For graphs (and two colors), it is also convenient to use a quantified version of Ramsey's classical result (recall that a \textit{stable set} is a set of pairwise non-adjacent vertices).

\begin{theorem}[Ramsey \cite{multiramsey}]\label{classicalramsey}
For all $c,s\in \poi$, every graph $G$ on at least $s^c$ vertices contains either a clique of cardinality $c+1$ or a stable set of cardinality $s$. 
\end{theorem}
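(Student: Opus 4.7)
The plan is to proceed by induction on $c$, with $s$ allowed to be arbitrary. For the base case $c=1$, we need that every graph $G$ on at least $s$ vertices contains either an edge (a clique of size $c+1=2$) or a stable set of size $s$; this is immediate, since if $G$ has no edge then $V(G)$ itself is a stable set, and any $s$ of its vertices will do.

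For the inductive step, fix $c\geq 2$, assume the statement for $c-1$ (applied with every value of $s$), and let $G$ be a graph on at least $s^c$ vertices. I would split into two cases based on whether $G$ has a vertex of sufficiently large degree. If some $v\in V(G)$ satisfies $|N_G(v)|\geq s^{c-1}$, then applying the induction hypothesis to the induced subgraph $G[N_G(v)]$ yields either a stable set of size $s$ (and we are done) or a clique of size $c$ contained in $N_G(v)$, which together with $v$ extends to a clique of size $c+1$ in $G$, as required.

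Otherwise, every vertex of $G$ has strictly fewer than $s^{c-1}$ neighbors. In this regime I would construct a stable set greedily: repeatedly pick any remaining vertex $v$, add it to the stable set being built, and delete both $v$ and all its (fewer than $s^{c-1}$) neighbors from $G$. Each round removes at most $s^{c-1}$ vertices, so starting from at least $s^c$ vertices the procedure iterates at least $s$ times, producing a stable set of size at least $s$, as required.

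I do not anticipate any serious obstacle here: the bound $s^c$ is slack enough that the classical inductive argument for Ramsey's theorem goes through cleanly. The only point requiring a little care is aligning the strict versus non-strict inequalities in the greedy step, so that maximum degree at most $s^{c-1}-1$ is correctly converted into an independent set of size at least $s^c / s^{c-1} = s$.
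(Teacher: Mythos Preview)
Your proof is correct. The paper does not actually prove this statement: it is quoted as a classical result with a citation to Ramsey~\cite{multiramsey}, so there is no ``paper's own proof'' to compare against. Your argument is the standard one for the bound $R(c+1,s)\leq s^c$, and the only point of care you already flagged (that deleting a vertex and its at most $s^{c-1}-1$ neighbours removes at most $s^{c-1}$ vertices per round) is handled correctly.
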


From Theorem~\ref{multiramsey}, we deduce:

\begin{lemma}\label{lem:loosepolypath}
        For all $l,q,s\in \poi$,  there is a constant $\sigma=\sigma(l,q,s)\in \poi$ with the following property. Let $G$ be a graph and let $\mathcal{P}$ be a $\sigma$-polypath in $G$. Then one of the following holds. 
\begin{enumerate}[\rm (a)]
        \item\label{lem:loosepolypath_a} There exists an $(s,l)$-cluster $(S,\mathcal{L})$ in $G$ such that $S\subseteq V(\mathcal{P})$ and $\mathcal{L}\subseteq \mathcal{P}$.
        \item\label{lem:loosepolypath_b} There exists an $l$-loose $q$-polypath $\mathcal{Q}$ in $G$ with $\mathcal{Q}\subseteq \mathcal{P}$.
    \end{enumerate}
\end{lemma}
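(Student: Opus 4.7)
My plan is to reformulate both outcomes in terms of the sets
\[
S(\mathcal{L}) := \{\, v \in V(\mathcal{P}) \setminus V(\mathcal{L}) : v \text{ has a neighbor in every path of } \mathcal{L}\,\},
\]
defined for each $\mathcal{L} \in \binom{\mathcal{P}}{l}$. Outcome~(a) is equivalent to $|S(\mathcal{L})| \geq s$ for some $\mathcal{L}$ (then take any $s$-element $S \subseteq S(\mathcal{L})$). Outcome~(b) fails exactly when every $q$-subset $\mathcal{Q} \subseteq \mathcal{P}$ admits an \emph{obstruction}: a pair $(v, \mathcal{L})$ with $\mathcal{L} \in \binom{\mathcal{Q}}{l}$, $v \in S(\mathcal{L})$, and the path $P(v)$ of $v$ contained in $\mathcal{Q}$ (so $P(v) \in \mathcal{Q} \setminus \mathcal{L}$).

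I plan to finish via a direct double-count; no Ramsey input is needed here (despite the placement after Theorem~\ref{multiramsey}). The case $q \leq l$ is trivial, since any $q$-polypath is vacuously $l$-loose (each vertex lies in one path and so has neighbors in at most $q-1 < l$ others), so $\sigma = q$ suffices. For $q > l$, I set $\sigma = \sigma(l, q, s) = l + s(q-l)\binom{q}{l} + 1$ and suppose for contradiction that both (a) and (b) fail. Counting triples $(\mathcal{Q}, v, \mathcal{L})$ with $\mathcal{Q} \in \binom{\mathcal{P}}{q}$, $\mathcal{L} \in \binom{\mathcal{Q}}{l}$, and $v \in S(\mathcal{L})$ with $P(v) \in \mathcal{Q}$ in two ways: every $\mathcal{Q}$ contributes at least one such triple (failure of (b)), while each fixed $(v, \mathcal{L})$ appears in exactly $\binom{\sigma-l-1}{q-l-1}$ triples (those $\mathcal{Q}$ containing the $l+1$ paths $\mathcal{L} \cup \{P(v)\}$). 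Combined with $|S(\mathcal{L})| < s$ for every $\mathcal{L}$ (failure of (a)), this gives
\[
\binom{\sigma}{q} \;\leq\; \binom{\sigma-l-1}{q-l-1}\sum_{\mathcal{L} \in \binom{\mathcal{P}}{l}} |S(\mathcal{L})| \;<\; s\binom{\sigma}{l}\binom{\sigma-l-1}{q-l-1}.
\]
Applying the identity $\binom{\sigma}{q}\binom{q}{l} = \binom{\sigma}{l}\binom{\sigma-l}{q-l}$ together with $\binom{\sigma-l}{q-l} = \frac{\sigma-l}{q-l}\binom{\sigma-l-1}{q-l-1}$, the inequality collapses to $\sigma - l < s(q-l)\binom{q}{l}$, contradicting the choice of $\sigma$.

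I do not expect a serious obstacle: the main subtlety is just ensuring the double-count is set up so that every obstruction is counted, and only within $q$-subsets that contain the $l+1$ paths $\mathcal{L} \cup \{P(v)\}$. Once the contradiction is reached, either some $|S(\mathcal{L})| \geq s$ (yielding an $(s,l)$-cluster via any $s$-subset of $S(\mathcal{L})$, giving outcome~(a)), or some $q$-subset $\mathcal{Q} \subseteq \mathcal{P}$ contains no obstruction and is therefore $l$-loose, giving outcome~(b).
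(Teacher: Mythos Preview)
Your argument is correct. The reformulation of (a) via the sets $S(\mathcal{L})$ and of the failure of (b) via obstruction pairs $(v,\mathcal{L})$ is accurate (the point that $v\in S(\mathcal{L})$ forces $P(v)\notin\mathcal{L}$ is exactly what makes each such pair extend to $\binom{\sigma-l-1}{q-l-1}$ choices of $\mathcal{Q}$), and the binomial identities collapse the inequality to $\sigma-l<s(q-l)\binom{q}{l}$ as you claim.

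Your route, however, is genuinely different from the paper's. The paper colors each $(l+1)$-subset $T$ of $\mathcal{P}$ by the subset of positions $i\in\poi_{l+1}$ for which the $i$th path in $T$ contains a vertex with neighbors in all other $l$ paths of $T$, and then applies the hypergraph Ramsey theorem (Theorem~\ref{multiramsey}) to extract a homogeneous set $Z$ of size $\max\{l+s,q\}$; if the common color is nonempty they build an $(s,l)$-cluster, and otherwise any $q$ paths from $Z$ form an $l$-loose polypath. Your double-counting argument bypasses Ramsey entirely and yields the polynomial bound $\sigma=l+s(q-l)\binom{q}{l}+1$, which is dramatically smaller than the paper's $\rho(\max\{l+s,q\},\,l+1,\,2^{l+1})$. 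The paper's approach has the minor conceptual advantage that the homogeneous set $Z$ carries uniform structure (every $(l+1)$-subset behaves the same way), but that extra uniformity is never used downstream, so your elementary argument is strictly preferable here.
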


\begin{proof}
We claim that $$\sigma=\sigma(l,q,s)=\rho(\max\{l+s,q\},l+1,2^{l+1})$$
satisfies the lemma, where $\rho(\cdot,\cdot,\cdot)$ comes from Theorem~\ref{multiramsey}.

Assume that \ref{lem:loosepolypath}\ref{lem:loosepolypath_a} does not hold. Fix an enumeration $\mathcal{P}=\{P_1,\ldots, P_{\sigma}\}$ of the elements of $\mathcal{P}$. For every $(l+1)$-subset $T=\{t_1,\ldots, t_{l+1}\}$ of $\poi_{\sigma}$ with $t_1<\cdots<t_{l+1}$, let $\Phi(T)\subseteq \poi_{l+1}$ be the set of all $i\in \poi_{l+1}$ for which there exists a vertex $x_{t_i}\in P_{t_i}$ that has a neighbor in $P_{t_j}$ for every $j\in \poi_{l+1}\setminus \{i\}$. It follows that the map $\Phi:\binom{\poi_{\sigma}}{l+1}\rightarrow 2^{\poi_{l+1}}$ is well-defined. Therefore, by the choice of $\sigma$, we can apply Theorem~\ref{multiramsey} and obtain $Z\subseteq \poi_{\sigma}$ with $|Z|=\max\{l+s,q\}$ as well as $F\subseteq \poi_{l+1}$ such that for every $I\in \binom{Z}{l+1}$, we have $\Phi(I)=F$. Now we claim that:

\sta{\label{st:Fisempty} $F$ is empty.}

Suppose not. Then we may choose $f\in F\subseteq \poi_{l+1}$. Since $|Z|\geq l+s$, it follows that there exist $I,J,K\subseteq Z$ with $|I|=f-1$, $|J|=s$ and $|K|=l-f+1$, such that $\max I< \min J$ and  $\max J<\min K$. For every $j\in J$,  define $T_j=I\cup \{j\}\cup K$; it follows that $T_j\in \binom{Z}{l+1}$ and $j$ is the $f^{\text{th}}$ smallest element of $T_j$. In particular, for every $j\in J$, we have $\Phi(T_j)=F$ and so $f\in \Phi(T_j)$, which in turn implies that there is a vertex $x_{j}\in P_{j}$ that has a neighbor in $P_{t}$ for every $t\in T_j\setminus \{j\}=I\cup K$. Let $S=\{x_j:j\in J\}$ and let $\mathcal{L}=\{P_t:t\in I\cup K\}$. Then $|S|=s$, $\mathcal{L}$ is an $l$-polypath in $G\setminus S$, and every vertex in $S$ has a neighbor in every path in $\mathcal{L}$. But now $(S,\mathcal{L})$ is an $(s,l)$-cluster in $G$ satisfying \ref{lem:loosepolypath}\ref{lem:loosepolypath_a}, a contradiction. This proves \eqref{st:Fisempty}.

\medskip

Since $|Z|\geq q$, we may choose $Q\subseteq Z$ with $|Q|=q$. Let $\mathcal{Q}=\{P_i:i\in Q\}$. Then $\mathcal{Q}$ is a $q$-polypath in $G$ with $\mathcal{Q}\subseteq \mathcal{P}$, and by \eqref{st:Fisempty}, $\mathcal{Q}$ is $l$-loose. Hence, $\mathcal{Q}$ satisfies \ref{lem:loosepolypath}\ref{lem:loosepolypath_b}. This completes the proof of Lemma~\ref{lem:loosepolypath}.
\end{proof}

The following two lemmas have similar proofs:

\begin{lemma}\label{lem:meagercluster}
    Let $l,s,t\in \poi$, let $G$ be a graph and let $(S,\mathcal{L}_0)$ be an $(s,l+t^ts^{t^t})$-cluster in $G$. Then one of the following holds.
\begin{enumerate}[\rm (a)]
        \item\label{lem:meagercluster_a} $G$ contains $K_{t+1}$ or $K_{t,t}$. 
        \item\label{lem:meagercluster_b} There exists $\mathcal{L}\subseteq 
    \mathcal{L}_0$ with $|\mathcal{L}|=l$ such that $(S,\mathcal{L})$ is a $t^t$-meager $(s,l)$-cluster in $G$.
    \end{enumerate}
\end{lemma}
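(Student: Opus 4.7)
The plan is to remove from $\mathcal{L}_0$ the paths that obstruct the meager property, and to show that if too many such paths are present, then a complete bipartite structure on sufficiently many vertices is forced, from which Ramsey's theorem extracts an induced $K_{t+1}$ or $K_{t,t}$. Call a path $L \in \mathcal{L}_0$ \emph{heavy} if it contains a vertex $v_L$ with $|N_G(v_L) \cap S| \geq t^t$, and for each heavy $L$ fix such a $v_L$ together with a $t^t$-subset $T_L \subseteq N_G(v_L) \cap S$. The proof then splits into two cases according to the number of heavy paths.

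First I would handle the easy case: if at most $t^t s^{t^t}$ paths in $\mathcal{L}_0$ are heavy, then at least $l$ paths are non-heavy, any $l$ of which form a subcollection $\mathcal{L}$ so that $(S,\mathcal{L})$ is by construction a $t^t$-meager $(s,l)$-cluster, giving outcome \ref{lem:meagercluster}\ref{lem:meagercluster_b}. Otherwise, the number of heavy paths exceeds $t^t s^{t^t}$, and since there are at most $\binom{s}{t^t} \leq s^{t^t}$ possible choices of $T_L$, the pigeonhole principle yields a single $t^t$-subset $T \subseteq S$ and heavy paths $L_1,\ldots,L_{t^t} \in \mathcal{L}_0$ with $T_{L_i} = T$ for all $i$. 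Because the paths in $\mathcal{L}_0$ are pairwise disjoint, the vertices $v_{L_1},\ldots,v_{L_{t^t}}$ are distinct, and each is adjacent to every vertex of $T$; thus $G$ contains a (not necessarily induced) $K_{t^t,t^t}$ between $V^{\circ} = \{v_{L_1},\ldots,v_{L_{t^t}}\}$ and $T$.

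The remaining step, which is the only place that requires any thought, is to upgrade this biclique subgraph to an induced $K_{t+1}$ or $K_{t,t}$. For this I would apply Theorem~\ref{classicalramsey} with $c = s = t$ (so $s^c = t^t$) twice: once to $G[V^{\circ}]$ and once to $G[T]$. In either application, the outcome is a clique of size $t+1$ (immediately giving outcome \ref{lem:meagercluster}\ref{lem:meagercluster_a}) or an independent set of size $t$. In the remaining case we obtain $V^{*} \subseteq V^{\circ}$ and $U^{*} \subseteq T$, each of size $t$ and each a stable set in $G$. Since $V^{*} \subseteq V(\mathcal{L}_0)$ and $U^{*} \subseteq S$ are disjoint, and since every vertex of $V^{*}$ is complete to $T \supseteq U^{*}$, the set $V^{*} \cup U^{*}$ induces a copy of $K_{t,t}$ in $G$, again delivering \ref{lem:meagercluster}\ref{lem:meagercluster_a}.

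I do not expect any serious obstacle. The only subtlety worth flagging is that a direct pigeonhole produces a $K_{t^t,t^t}$ only as a subgraph, not as an induced subgraph, so the double Ramsey step is essential; skipping it would leave open the possibility of unwanted edges inside $V^{\circ}$ or inside $T$. The bookkeeping on $t^t s^{t^t}$ is chosen precisely so that outcome \ref{lem:meagercluster}\ref{lem:meagercluster_b} retains $l$ paths while outcome \ref{lem:meagercluster}\ref{lem:meagercluster_a} has enough vertices on each side of the forced biclique for the two Ramsey applications to succeed.
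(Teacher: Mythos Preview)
Your proposal is correct and follows essentially the same argument as the paper: define heavy paths, use pigeonhole on the $t^t$-subsets of $S$ to force a $K_{t^t,t^t}$ subgraph when there are too many heavy paths, and then apply Theorem~\ref{classicalramsey} to each side to extract an induced $K_{t+1}$ or $K_{t,t}$. The only cosmetic difference is that the paper phrases the heavy-path count as a claim proved by contradiction (assuming \ref{lem:meagercluster}\ref{lem:meagercluster_a} fails), whereas you present it as a direct case split; the underlying steps are identical.
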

\begin{proof}
   Suppose that \ref{lem:meagercluster}\ref{lem:meagercluster_a} does not hold. For every $L\in \mathcal{L}_0$, let $t_{L}$ be the largest integer in $\poi_{s}$ for which there exists a vertex $u_L\in L$ with at least $t_{L}$ neighbors in $S$. It follows that:

    \sta{\label{st:fewtL} We have $|\{L\in \mathcal{L}_0:t_L\geq t^t\}|<t^ts^{t^t}$.}

    Suppose not. Let $\mathcal{S}\subseteq \{L\in \mathcal{L}_0:t_L\geq t^t\}$ with $|\mathcal{S}|=t^ts^{t^t}$. Then for every $L\in \mathcal{S}$, we may choose $u_L\in L$ and $T_L\subseteq S$ such that $|T_L|=t^t$ and $u_L$ is complete to $T_L$ in $G$. Since $|S|=s$, it follows that there exist $T\subseteq S$ and $\mathcal{T}\subseteq \mathcal{S}$ such that $|T|=|\mathcal{T}|=t^t$, and for every $L\in \mathcal{T}$, we have $T_L=T$. Let $U=\{u_L:L\in \mathcal{T}\}$. Then $T$ is disjoint from and complete to $U$ in $G$. Also, since $G$ is $K_{t+1}$-free and $|T|=|U|=t^t$, it follows from Theorem~\ref{classicalramsey} that there are two stable sets $T'\subseteq T$ and $U'\subseteq U$ in $G$ with $|T'|=|U'|=t$. But then $G[T'\cup U']$ is isomorphic to $K_{t,t}$, a contradiction. This proves \eqref{st:fewtL}.

    \medskip

    Now the result is immediate from \eqref{st:fewtL} and the fact that $|\mathcal{L}_0|=l+t^ts^{t^t}$. This completes the proof of Lemma~\ref{lem:meagercluster}. 
\end{proof}

\begin{lemma}\label{lem:loosepolypathbip}
    Let $l,s,w\in \poi$, let $G$ be a graph and let $\mathcal{Q},\mathcal{Q}'$ be $((s+1)^{l+1}w^{l^2})$-polypaths in $G$ with $V(\mathcal{Q})\cap V(\mathcal{Q}')=\emptyset$. Then one of the following holds. 
\begin{enumerate}[\rm (a)]
        \item\label{lem:loosepolypathbip_a} There exists an $(s,l)$-cluster $(S,\mathcal{L})$ in $G$ such that either $S\subseteq V(\mathcal{Q})$ and $\mathcal{L}\subseteq \mathcal{Q}'$, or $S\subseteq V(\mathcal{Q}')$ and $\mathcal{L}\subseteq \mathcal{Q}$.
        \item\label{lem:loosepolypathbip_b} There exist $\mathcal{W}\subseteq \mathcal{Q}$ and $\mathcal{W}'\subseteq \mathcal{Q}'$ with $|\mathcal{W}|=|\mathcal{W}'|=w$ such that every vertex in $V(\mathcal{W})$ has neighbors in fewer than $l$ paths in $\mathcal{W}'$, and every vertex in $V(\mathcal{W}')$ has neighbors in fewer than $l$ paths in $\mathcal{W}$.
    \end{enumerate}
\end{lemma}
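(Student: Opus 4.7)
The plan is to prove this by contradiction, using a Ramsey-style argument on $(l+1)$-subsets of a common index set, in the spirit of the proof of Lemma~\ref{lem:loosepolypath} but adapted to the bipartite setting. Suppose (a) fails; then for every $l$-subset $\mathcal{L}\in\binom{\mathcal{Q}'}{l}$, the set
$D(\mathcal{L}):=\{P\in\mathcal{Q}:\exists v\in P\text{ with a neighbor in every path of }\mathcal{L}\}$
has size at most $s-1$, for otherwise picking one witness vertex from each of $s$ paths in $D(\mathcal{L})$ yields an $(s,l)$-cluster $(S,\mathcal{L})$ with $S\subseteq V(\mathcal{Q})$ and $\mathcal{L}\subseteq \mathcal{Q}'$, giving outcome (a). The symmetric bound holds for $D'(\mathcal{L})$ with $\mathcal{L}\in\binom{\mathcal{Q}}{l}$.

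Next, enumerate $\mathcal{Q}=\{P_1,\dots,P_N\}$ and $\mathcal{Q}'=\{P'_1,\dots,P'_N\}$, and for each $(l+1)$-subset $I=\{i_1<\cdots<i_{l+1}\}\subseteq [N]$ define a color $(F_1(I),F_2(I))$, where $F_1(I)\subseteq [l+1]$ is the set of $j$ such that $P_{i_j}$ contains a vertex with a neighbor in each $P'_{i_k}$ for $k\neq j$, and $F_2(I)$ is defined symmetrically. The plan is to use an iterative pigeonhole/product-Ramsey argument, tuned to the bounded number of relevant colors coming from the $|D(\mathcal{L})|\leq s-1$ constraint, in order to extract a subset $Z\subseteq [N]$ of size at least $2w+l+s$ on which the color is constant $(F_1,F_2)$. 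The key reduction is that $F_1=F_2=\emptyset$ on such a $Z$: if $j^*\in F_1$, take $I_{\text{small}}$ to be the smallest $j^*-1$ elements of $Z$ and $I_{\text{large}}$ the largest $l+1-j^*$ elements, and then vary the middle index over the remaining $|Z|-l\geq s$ elements; this produces $s$ distinct witness vertices, each with a neighbor in every path of $\mathcal{L}:=\{P'_i:i\in I_{\text{small}}\cup I_{\text{large}}\}$, contradicting the no-cluster bound $|D(\mathcal{L})|\leq s-1$.

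Finally, partition $Z$ into disjoint $Z_1,Z_2$ of size $w$ and set $\mathcal{W}=\{P_i:i\in Z_1\}$, $\mathcal{W}'=\{P'_i:i\in Z_2\}$. For any $v$ in a path $P_j\in\mathcal{W}$ and any $l$-subset $\mathcal{L}\subseteq \mathcal{W}'$, since $Z_1\cap Z_2=\emptyset$ the relevant $(l+1)$-subset $\{j\}\cup\{\text{indices of }\mathcal{L}\}$ lies in $Z$, so $F_1=\emptyset$ forces $v$ not to have neighbors in every path of $\mathcal{L}$; this gives $|N(v)\cap \mathcal{W}'|<l$, and the symmetric argument with $F_2=\emptyset$ handles $\mathcal{W}'$. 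The main obstacle, and the reason for the specific bound $(s+1)^{l+1}w^{l^2}$, is keeping the size polynomial: a naive hypergraph Ramsey application on $(l+1)$-subsets with $2^{O(l)}$ colors gives a tower-type bound, so the Ramsey step must be replaced by an iterated pigeonhole that peels off an $(s+1)$ factor for each of the $l+1$ coordinates (using the no-cluster bound of at most $s$ witnesses) interleaved with $w$-factor refinements over the $l^2$ remaining rounds.
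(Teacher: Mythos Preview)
Your plan is considerably more elaborate than the paper's, and its hardest step is left as a hand-wave. The paper never touches Ramsey on $(l+1)$-tuples; it does two rounds of direct counting. Set $r=sw^l+w$ and pick any $\mathcal{R}'\subseteq\mathcal{Q}'$ with $|\mathcal{R}'|=r$. If at least $sr^l$ paths $Q\in\mathcal{Q}$ contain a vertex with neighbours in $\ge l$ paths of $\mathcal{R}'$, then pigeonholing on the fewer than $r^l$ possible $l$-subsets of $\mathcal{R}'$ produces $s$ such vertices sharing the same $\mathcal{L}$, hence an $(s,l)$-cluster. Otherwise, since $|\mathcal{Q}|\ge(s+1)^{l+1}w^{l^2}\ge sr^l+w$, one selects $\mathcal{W}\subseteq\mathcal{Q}$ of size $w$ whose every vertex has neighbours in fewer than $l$ paths of $\mathcal{R}'$. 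A symmetric second pass inside $\mathcal{R}'$ (now of size $sw^l+w$, against $\mathcal{W}$ of size $w$) yields $\mathcal{W}'$. That is the whole proof; the stated bound is just the arithmetic check $(s+1)^{l+1}w^{l^2}\ge sr^l+w$.

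Your route instead couples $\mathcal{Q}$ and $\mathcal{Q}'$ through a common index set and seeks a single $Z$ homogeneous for both $F_1$ and $F_2$. The parts you do write out (the $F_1=F_2=\emptyset$ deduction and the final partition of $Z$) are correct, but the crux --- extracting $Z$ of size $2w+l+s$ within the given polynomial bound --- is precisely what you label the ``main obstacle'' and do not prove. Your sketch of an iterated pigeonhole that ``peels off an $(s+1)$ factor for each of the $l+1$ coordinates, interleaved with $w$-factor refinements over $l^2$ rounds'' is not an argument I can reconstruct: the bound $|D(\mathcal{L})|\le s-1$ controls, for each fixed $\mathcal{L}\subseteq\mathcal{Q}'$, the number of bad paths on the $\mathcal{Q}$ side, but converting this into a refinement of a \emph{shared} index set that kills every position of $F_1$ and $F_2$ simultaneously is exactly where the artificial coupling hurts. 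Even the arithmetic does not close: $l+1$ factors of $s+1$ and $l^2$ factors of $w$ already consume all of $(s+1)^{l+1}w^{l^2}$, leaving nothing for the target size $2w+l+s$. The paper's decoupled two-pass argument sidesteps all of this --- there is no need to index $\mathcal{Q}$ and $\mathcal{Q}'$ jointly or to seek any homogeneous set.
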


\begin{proof}
Suppose \ref{lem:loosepolypathbip}\ref{lem:loosepolypathbip_a} does not hold. Let $r=sw^l+w$. Then we have 
$$(s+1)^{l+1}w^{l^2}=(s+1)((s+1)w^{l})^l=s(sw^{l}+w^l)^l+(s+1)^{l}w^{l^2}\geq sr^l+w\geq r.$$
In particular, one may choose $\mathcal{R}'\subseteq \mathcal{Q}'$ with $|\mathcal{R}'|=r$. For every $Q\in \mathcal{Q}$, let $r_{Q}$ be the largest integer in $\poi_{r}$ for which there exists a vertex $p_Q\in Q$ which has neighbors in at least $r_{Q}$ paths in $\mathcal{R}'$. It follows that:

    \sta{\label{st:polybipround1} We have $|\{Q\in \mathcal{Q}:r_Q\geq l\}|<sr^{l}$.}

    Suppose not. Let $\mathcal{P}\subseteq \{Q\in \mathcal{Q}:r_Q\geq l\}$ with $|\mathcal{P}|=sr^{l}$. Then for every $Q\in \mathcal{P}$, we may choose $p_Q\in Q$ and $\mathcal{L}_Q\subseteq \mathcal{R}'$ such that $|\mathcal{L}_Q|=l$ and $p_Q$ has a neighbor in every path in $\mathcal{L}_Q$. Since $|\mathcal{R}'|=r$, it follows that there exist $\mathcal{L}\subseteq \mathcal{R}'$ and $\mathcal{S}\subseteq \mathcal{P}$ such that $|\mathcal{L}|=l$, $|\mathcal{S}|=s$, and for every $Q\in \mathcal{S}$, we have $\mathcal{L}_Q=\mathcal{L}$. Let $S=\{p_Q:Q\in \mathcal{S}\}$; so we have $|S|=s$. But now $(S,\mathcal{L})$ is an $(s,l)$-cluster in $G$ with $S\subseteq V(\mathcal{S})\subseteq V(\mathcal{P})\subseteq V(\mathcal{Q})$ and $\mathcal{L} \subseteq \mathcal{R}'\subseteq \mathcal{Q}'$, contrary to the assumption that \ref{lem:loosepolypathbip}\ref{lem:loosepolypathbip_a} does not hold. This proves \eqref{st:polybipround1}.

    \medskip

    By \eqref{st:polybipround1} and since $|\mathcal{Q}|\geq sr^l+w$, we may choose $\mathcal{W}\subseteq \mathcal{Q}$ with $|\mathcal{W}|=w$ such that every vertex in $V(\mathcal{W})$ has neighbors in fewer than $l$ paths in $\mathcal{R}'$. 

Next, for every $R\in \mathcal{R}'$, let $w_{R}$ be the largest integer in $\poi_{w}$ for which there exists a vertex $q_R\in R$ which has neighbors in at least $w_{R}$ paths in $\mathcal{W}$.

    \sta{\label{st:polybipround2} We have $|\{R\in \mathcal{R}':w_R\geq l\}|<sw^l$.}

    Suppose not. Let $\mathcal{P}\subseteq \{R\in \mathcal{R}':w_R\geq l\}$ with $|\mathcal{P}|=sw^{l}$. Then for every $R\in \mathcal{P}$, we may choose $p_R\in R$ and $\mathcal{L}_R\subseteq \mathcal{W}$ such that $|\mathcal{L}_R|=l$ and $p_R$ has a neighbor in every path in $\mathcal{L}_R$. Since $|\mathcal{W}|=w$, it follows that there exist $\mathcal{L}\subseteq \mathcal{W}$ and $\mathcal{S}\subseteq \mathcal{P}$ such that $|\mathcal{L}|=l$, $|\mathcal{S}|=s$, and for every $R\in \mathcal{S}$, we have $\mathcal{L}_R=\mathcal{L}$. Let $S=\{q_R:R\in \mathcal{S}\}$; so we have $|S|=s$. But then $(S,\mathcal{L})$ is an $(s,l)$-cluster in $G$ with $S\subseteq V(\mathcal{S})\subseteq V(\mathcal{P})\subseteq V(\mathcal{R}')\subseteq V(\mathcal{Q}')$ and $\mathcal{L} \subseteq \mathcal{W}\subseteq \mathcal{Q}$, contrary to the assumption that \ref{lem:loosepolypathbip}\ref{lem:loosepolypathbip_a} does not hold. This proves \eqref{st:polybipround2}.

    \medskip

In view of \eqref{st:polybipround2} and since $|\mathcal{R}'|=r=sw^l+w$, we may choose $\mathcal{W}'\subseteq \mathcal{R}'\subseteq \mathcal{Q}'$ with $|\mathcal{W}'|=w$ such that every vertex in $V(\mathcal{W}')$ has neighbors in fewer than $l$ paths in $\mathcal{W}$. Now $\mathcal{W},\mathcal{W}'$ satisfy \ref{lem:loosepolypathbip}\ref{lem:loosepolypathbip_b}. This completes the proof of Lemma~\ref{lem:loosepolypathbip}.
\end{proof}

The last tool we need is a recent result from \cite{pinned}, the statement of which requires another definition. Let $G$ be a graph and let $w\in \poi$. By a \textit{$w$-web in $G$} we mean a pair $(W,\Lambda)$ where
\begin{enumerate}[(W1), leftmargin=15mm, rightmargin=7mm]
     \item\label{W1} $W$ is a $w$-subset of $V(G)$;
     \item\label{W2} $\Lambda:\binom{W}{2}\rightarrow 2^{V(G)}$ is a map such that for every $2$-subset $\{x,y\}$ of $W$, $\Lambda(\{x,y\})=\Lambda_{\{x,y\}}$ is a path in $G$ with ends $x,y$; and
     \item\label{W3} for all distinct $2$-subsets $\{x,y\},\{x',y'\}$ of $W$, we have $\Lambda_{\{x,y\}}\cap \Lambda_{\{x',y'\}}=\{x,y\}\cap \{x',y'\}$.
 \end{enumerate}
 It follows that $G$ has a subgraph isomorphic to a subdivision of $K_w$ if and only if there is a $w$-web in $G$. It is proved in \cite{pinned} that:

\begin{theorem}[Hajebi \cite{pinned}]\label{thm:pinned}
    For all $a,b,c,s\in \poi$,  there is a constant $\theta=\theta(a,b,c,s)\in \poi$ with the following property. Let $G$ be a graph and let $(W,\Lambda)$ be a $\theta$-web in $G$. Then one of the following holds.

     \begin{enumerate}[\rm (a)]
         \item\label{thm:pinned_a} There exists $A\subseteq W$ with $|A|=a$ and a collection $\mathcal{B}$ of pairwise disjoint $2$-subsets of $W\setminus A$ with $|\mathcal{B}|=b$ such that for every $x\in A$ and every $\{y,z\}\in \mathcal{B}$, $x$ has a neighbor in $\Lambda_{y,z}$.
          \item\label{thm:pinned_b} There are disjoint subsets $\mathcal{C}$ and $\mathcal{C}'$ of $\binom{W}{2}$ with $|\mathcal{C}|=|\mathcal{C}'|=c$ such that for every $\{x,y\}\in \mathcal{C}$ and every $\{x',y'\}\in \mathcal{C}'$, $\Lambda_{\{x,y\}}^*$ is not anticomplete to $\Lambda_{\{x',y'\}}^*$ in $G$. 
        \item\label{thm:pinned_c} There exists $S\subseteq W$ with $|S|=s$ such that:
        \begin{itemize}
        \item[-] $S$ is a stable set in $G$;
         \item[-] for any three vertices $x,y,z\in S$, $x$ is anticomplete to $\Lambda^*_{y,z}$; and
            \item[-] for all distinct $2$-subsets $\{x,y\},\{x',y'\}$ of $S$, $\Lambda^*_{\{x,y\}}$ is anticomplete to $\Lambda^*_{\{x',y'\}}$.
        \end{itemize}
      \end{enumerate}
\end{theorem}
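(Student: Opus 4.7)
My plan is to prove Theorem \ref{thm:pinned} by applying hypergraph Ramsey (Theorem \ref{multiramsey}) to a richly colored hypergraph of 4-subsets of $W$ and then reading off each of the three possible conclusions from the constant color on a large monochromatic subset of $W$. Fix any linear ordering $\prec$ on $W$. For every $\{x_1 \prec x_2 \prec x_3 \prec x_4\} \in \binom{W}{4}$, assemble a color comprising: (i) the six adjacency bits $x_i x_j \in E(G)$ for $i<j$; (ii) for each of the four 3-subsets $\{x_i,x_j,x_k\}$ with $i<j<k$, the three ``interior-neighbor'' bits recording whether the ordered-position-$p$ vertex has a neighbor in $\Lambda^*$ of the other two ($p \in \{1,2,3\}$) and the three ``shared-endpoint anticomplete'' bits recording, for each of the three pairs of 2-subsets in this triple that share a vertex, whether their $\Lambda^*$'s are anticomplete in $G$; and (iii) the three ``disjoint-pair anticomplete'' bits, one per partition of $\{x_1,x_2,x_3,x_4\}$ into two 2-subsets. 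The number of colors is a constant. Taking $\theta$ to be the Ramsey number supplied by Theorem \ref{multiramsey} for a monochromatic 4-set of size $n = \max\{a+2b,\,4c,\,s\}$, I obtain $S \subseteq W$ with $|S|=n$ on which the coloring is constant; a short extension-to-a-4-subset argument then forces each induced 2-subset and 3-subset bit to be constant across all of $\binom{S}{2}$ and $\binom{S}{3}$.

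Next I would case-split on the constant color. If the 2-subset bit is ``adjacent'', then $S$ is a clique, so any $A \in \binom{S}{a}$ together with any $b$ pairwise disjoint 2-subsets of $S \setminus A$ gives outcome \ref{thm:pinned_a} (each $x \in A$ is adjacent to every endpoint of every chosen path). Otherwise, if some ``interior-neighbor'' bit at position $p \in \{1,2,3\}$ is ``yes'', I would split $S$ into at most three consecutive blocks so that every ordered triple formed by one $A$-vertex and two pair-vertices places the $A$-vertex at position $p$, and outcome \ref{thm:pinned_a} follows. If some ``shared-endpoint'' 3-subset bit is ``no'', I would pivot on an extreme vertex of $S$ and choose $\mathcal{C}$ and $\mathcal{C}'$ to be pairs through the pivot, drawing the other endpoints from disjoint low/high segments so that every arising ordered triple matches the offending position-pattern. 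If some ``disjoint-pair'' 4-subset bit is ``no'', I would split $S$ into four consecutive blocks of size $c$ arranged according to the offending partition --- for example blocks $A \prec C \prec B \prec D$ for $\{x_1,x_3\}|\{x_2,x_4\}$, with $\mathcal{C}=\{\{A_i,B_i\}:i\leq c\}$ and $\mathcal{C}'=\{\{C_j,D_j\}:j\leq c\}$ --- so that every cross-pair produces the correct ordered-position signature; outcome \ref{thm:pinned_b} follows in either subcase. Otherwise the 2-subset bit is ``non-adjacent'', every ``interior-neighbor'' bit is ``no'', and every ``anticomplete'' bit is ``yes''; then $S$ is stable, every $x \in S$ is anticomplete to $\Lambda^*_{y,z}$ for distinct $y,z \in S \setminus \{x\}$, and the $\Lambda^*$'s of any two distinct 2-subsets of $S$ are anticomplete (shared-endpoint pairs handled by the 3-subset bits, disjoint pairs by the 4-subset bits), so $S$ gives outcome \ref{thm:pinned_c}.

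The main obstacle I expect is the bookkeeping for outcome \ref{thm:pinned_b} when the ``no'' bit corresponds to an interleaved partition such as $\{x_1,x_3\}|\{x_2,x_4\}$ or $\{x_1,x_4\}|\{x_2,x_3\}$: one needs every cross-pair $(\{u,v\}\in\mathcal{C},\{u',v'\}\in\mathcal{C}')$ to produce an ordered 4-subset of $W$ matching the positional signature of the offending partition. The four-block construction above achieves exactly this, at the cost of requiring $|S|\geq 4c$. Once $n$ is set to accommodate the worst of the three extractions and $\theta$ is set to the matching Ramsey number, the case analysis goes through unimpeded, and no structural hypothesis on $G$ or $\Lambda$ beyond the web structure is invoked.
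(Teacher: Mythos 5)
The paper does not actually prove this statement: it is imported verbatim from Hajebi's paper \cite{pinned} and used as a black box, so there is no in-paper proof to compare against. Judged on its own, your Ramsey derivation of the statement as quoted is correct. All the relevant interactions (adjacency between branch vertices, attachment of a branch vertex to $\Lambda^*$ of a pair, and non-anticompleteness between $\Lambda^*$'s of two pairs) are determined by at most four vertices of $W$, so a single application of Theorem~\ref{multiramsey} with $q=4$ and a constant palette does capture everything; the extraction of each outcome from the constant colour checks out, including the two delicate points you flag, namely the middle-pivot construction for the shared-endpoint configurations and the interleaved/nested four-block constructions for outcome \ref{thm:pinned_b}. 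Two small bookkeeping remarks: the monochromatic set must be taken a few elements larger than $\max\{a+2b,4c,s\}$ (roughly $\max\{a+2b,4c,s,8\}$ suffices), because your "extension to a $4$-subset" step needs a single pair, respectively triple, of $S$ that realizes \emph{every} positional signature inside a $4$-subset in order to identify the positional copies of each bit before the constant value can be read off an arbitrary pair or triple; and in the final case one should note explicitly that the shared-endpoint bits together with the disjoint-pair bits exhaust all pairs of distinct $2$-subsets of $S$, so that outcome \ref{thm:pinned_c} really does follow. Neither point is a gap, just detail to be written out. Whether this matches the argument in \cite{pinned} I cannot say, but your proof is noticeably more elementary than one would expect for the main theorem of a standalone paper, which suggests the version quoted here may be a weakened corollary of the result actually proved there.
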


We are now in a position to prove our main result in this section, which we restate:

\setcounter{theorem}{0}

\begin{theorem}\label{thm:grid}
    For all $c,l,s,t,w\in \poi$,  there is a constant $\gamma=\gamma(c,l,s,t,w)\in \poi$ with the following property. Let $G$ be a $t$-clean graph of treewidth more than $\gamma$. Then one of the following holds.
    \begin{enumerate}[\rm (a)]
        \item\label{thm:grid_a} There exists a $t^t$-meager $(s,l)$-cluster in $G$.
        \item\label{thm:grid_b} There exists a $2w$-polypath in $G$ which is both $w$-fancy and $(l+t^ts^{t^t})$-loose.
    \end{enumerate}
\end{theorem}
\begin{proof}
 Let $b=l+t^ts^{t^t}$, let $\sigma=\sigma(b,(s+1)^{b+1}w^{b^2},s)$ be as in Lemma~\ref{lem:loosepolypath} and let $\theta=\theta(s,b,\sigma,2t^2)$ be as in Theorem~\ref{thm:pinned}. We claim that 
$$\gamma=\gamma(c,l,s,t,w)=\kappa(\theta,t)$$
satisfies the theorem, where $\kappa(\cdot,\cdot)$ is as in Theorem~\ref{thm:tw7block}. 
 
Let $G$ be a $t$-clean graph of treewidth more than $\gamma$. Suppose \ref{thm:grid}\ref{thm:grid_a} does not hold. This, along with the choice of $b$, the assumption that $G$ is $t$-clean, and Lemma~\ref{lem:meagercluster}, implies that:

\sta{\label{st:nobigcluster}There is no $(s,b)$-cluster in $G$.}

From the choice of $\gamma$ and Theorem~\ref{thm:tw7block}, we obtain a strong $\theta$-block $B$ in $G$. In particular,
for every $2$-subset $\{x,y\}$ of $B$, we may choose a path $\Lambda_{\{x,y\}}$ in $G$ from $x$ to $y$, such that for all distinct $2$-subsets $\{x,y\}, \{x',y'\}$ of $B$, we have $\Lambda_{\{x,y\}}\cap \Lambda_{\{x',y'\}}=\{x,y\}\cap \{x',y'\}$. It follows that $(B,\Lambda)$ is a $\theta$-web in $G$, and so we can apply Theorem~\ref{thm:pinned} to $(B,\Lambda)$. Note that Theorem~\ref{thm:pinned}\ref{thm:pinned_a} yields an $(s,b)$-cluster in $G$, which violates \eqref{st:nobigcluster}. Also, Theorem~\ref{thm:pinned}\ref{thm:pinned_c} implies that $G$ contains a proper subdivision of $K_{2t^2}$ (as an induced subgraph), which in turn contains a proper subdivision of every graph on $2t^2$ vertices. But this violates the assumption that $G$ is $t$-clean because $|V(W_{t\times t})|\leq 2t^2$.

We conclude that Theorem~\ref{thm:pinned}\ref{thm:pinned_b} holds. In particular, there are two $\sigma$-polypaths $\mathcal{C},\mathcal{C}'$ in $G$ such that for every $L\in \mathcal{C}$ and every $L'\in \mathcal{C}'$, $L$ is not anticomplete to $L'$ in $G$. Now, from \eqref{st:nobigcluster}, the choice of $\sigma$ and Lemma~\ref{lem:loosepolypath}, it follows that there are two $b$-loose $((s+1)^{b+1}w^{b^2})$-polypaths $\mathcal{Q}\subseteq \mathcal{C}$ and $\mathcal{Q}'\subseteq \mathcal{C}'$ in $G$. Furthermore, from \eqref{st:nobigcluster} and Lemma~\ref{lem:loosepolypathbip}, it follows that there exist $\mathcal{W}\subseteq Q$ and $\mathcal{W}'\subseteq Q'$ with $|\mathcal{W}|=|\mathcal{W}'|=w$ such that every vertex in $V(\mathcal{W})$ has neighbors in fewer than $b$ paths in $\mathcal{W}'$, and every vertex in $V(\mathcal{W}')$ has neighbors in fewer than $b$ paths in $\mathcal{W}$. But now $\mathcal{W}\cup \mathcal{W}'$ is a $2w$-polypath in $G$ which is both $w$-fancy and $b$-loose, and so \ref{thm:grid}\ref{thm:grid_b} holds. This completes the proof of Theorem~\ref{thm:grid}.
\end{proof}

\section{Dealing with complete bipartite minor models}\label{sec:turbinconst}

Here we take the penultimate step of our proof by showing that:

\begin{theorem}\label{mainloaded}
For all $c,t\in \poi$,  there is a constant $\Gamma=\Gamma(c,t)$ such that every $t$-clean graph $G$ of treewidth more than $\Gamma$ contains a $c$-hassle.
\end{theorem}

From Theorem~\ref{thm:grid}, we know that every $t$-clean graph of sufficiently large treewidth contains, omitting the corresponding parameters, either a meager cluster or a polypath which is both loose and fancy. So it suffices to prove Theorem~\ref{mainloaded} separately in each of these two cases. First, we show:

\begin{theorem}\label{thm:tasselincluster}
 Let $c,d\in \poi$ and let $G$ be a graph. Assume that there is a $d$-meager $(2cd,2c^2d)$-cluster in $G$. Then $G$ contains a $c$-hassle.
\end{theorem}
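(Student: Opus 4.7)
The plan is to take a well-chosen vertex of $S$ as the neck of the hassle, and to use $c$ of the paths in $\mathcal{L}$ (with their natural enumerations) as the walks. The observation driving the proof is that any path, viewed as a walk via its natural enumeration, is automatically $c$-stretched, so the only non-trivial condition that remains to be ensured is that the neck $x$ has a neighbor in each chosen walk but no neighbor among its first or last $c$ vertices. The cluster definition already guarantees a neighbor of every $x\in S$ in every path of $\mathcal{L}$, so only the anticompleteness at the two ends has to be arranged.

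The first step is to verify that each path $L\in\mathcal{L}$ has $|V(L)|\geq 2c+1$, so that the first and last $c$ vertices are disjoint and leave room for a neighbor of $x$ strictly in between. This is a short double count: since each $x\in S$ has a neighbor in $L$, we have $|S|\leq \sum_{v\in V(L)}|N(v)\cap S|\leq (d-1)|V(L)|$ by meagerness, so $|V(L)|\geq 2cd/(d-1)>2c$ (note $d\geq 2$, since otherwise meagerness forbids any edge between $S$ and $V(\mathcal{L})$, contradicting the cluster property). Now for each $L\in\mathcal{L}$, let $F_L$ and $B_L$ be its first and last $c$ vertices; since $|F_L\cup B_L|=2c$ and every vertex of $V(\mathcal{L})$ has at most $d-1$ neighbors in $S$, at most $2c(d-1)$ vertices of $S$ can have a neighbor in $F_L\cup B_L$. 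Hence the number of \emph{$L$-compatible} vertices of $S$ (those anticomplete in $G$ to $F_L\cup B_L$) is at least $|S|-2c(d-1)=2cd-2c(d-1)=2c$.

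Summing these compatibility counts over $L\in\mathcal{L}$ and averaging over $S$, some $x\in S$ is $L$-compatible for at least $(2c\cdot|\mathcal{L}|)/|S|=(2c\cdot 2c^2d)/(2cd)=2c^2\geq c$ paths. Picking $c$ of these and calling the resulting collection $\mathcal{L}'$, the induced subgraph $\Xi=G[\{x\}\cup V(\mathcal{L}')]$ is the desired $c$-hassle: the paths in $\mathcal{L}'$, taken with their natural enumerations, serve as pairwise disjoint (because $\mathcal{L}$ is a polypath) $c$-stretched walks; the neck $x$ lies outside $V(\mathcal{L}')$ (since $V(\mathcal{L})\cap S=\emptyset$), has a neighbor in each walk by the cluster property, and is anticomplete to the first and last $c$ vertices of each walk by compatibility; any remaining edges of $G$ between distinct walks are harmless, as the definition of a hassle allows arbitrary such inter-walk edges. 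There is no real obstacle in this argument: once one observes that the walks of a hassle are allowed to be plain paths (so the stretched condition is automatic), the whole proof reduces to the short meagerness-based counting above.
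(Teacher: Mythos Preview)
Your proof is correct and follows essentially the same approach as the paper's: both use $d$-meagerness to bound the number of vertices of $S$ with a neighbor among the first or last $c$ vertices of each path, then pigeonhole to find a single $x\in S$ compatible with at least $c$ paths, which become the walks of the hassle. Your edge-counting is slightly more direct (and yields the tighter bound $2c(d-1)$ rather than the paper's $<2cd$), but the structure of the argument is the same.
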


\begin{proof}
    Let $(S,\mathcal{L})$ be a $d$-meager $(2cd,2c^2d)$-cluster in $G$.  For every path $P$ in $G[V(\mathcal{L})]$, let $S_P$ be the set of all vertices in $S$ with a neighbor in $P$. For every $L\in \mathcal{L}$ and each end $u$ of $L$, let $L_u$ be the longest path in $L$ containing $u$ such that $|S_{L_u}|<cd$; then $u$ is an end of $L_u$. It follows that:

         \sta{\label{st:Tsmall} For every $L\in \mathcal{L}$ and every end $u$ of $L$, we have $|L_u|\geq c$.}

         Suppose for a contradiction that $|L_u|\leq c-1$. Then since $(S,\mathcal{L})$ is $d$-meager, it follows that $|S_{L_u}|<(c-1)d<|S|$. Let $v$ be the end of $L_u$ other than $u$. Since $|S_{L_u}|<|S|$ and every vertex in $S$ has a neighbor in $L$, it follows that $L\setminus L_u\neq \emptyset$. Let $v'$ be the unique neighbor of $v$ in $L\setminus L_u$, and let $P=u\dd L_u v\dd v'$. Then $|P|\leq c$, and since $(S,\mathcal{L})$ is $d$-meager, it follows that $|S_P|< cd$. This violates the choice of $L_u$, and proves \eqref{st:Tsmall}.

         \medskip
    
    From \eqref{st:Tsmall}, it follows that for every $L\in \mathcal{L}$, there are fewer than $2cd$ vertices in $S$ with a neighbor among the first or last $c$ vertices of $L$. This, along with $|S|=2cd$ and the fact that every vertex in $S$ has a neighbor in every path in $\mathcal{L}$, implies that for every $L\in \mathcal{L}$, there is a vertex $x_L\in S$ with a neighbor in $L$ and no neighbor among the first and last $c$ vertices of $L$. On the other hand, we have $|\mathcal{L}|=c|S|$. Thus, there exist $x\in S$ and $\mathcal{W}\subseteq \mathcal{L}$ with $|\mathcal{W}|=c$ such that for every $L\in \mathcal{W}$, the vertex $x$ has a neighbor in $L$ and no neighbor among the first and last $c$ vertices of $L$. But now $\Xi=G[\{x\}\cup V(\mathcal{W})]$ is a $c$-hassle in $G$ with neck $x$ and with $\mathcal{W}$ as its set of walks (each of which is a path in $\Xi$). This completes the proof of Theorem~\ref{thm:tasselincluster}.
\end{proof}

Handling the second outcome of Theorem~\ref{thm:grid} is more demanding.
We prove:

\begin{theorem}\label{thm:tasselinpoly}
 For all $c,d,t\in \poi$,  there is a constant $\Omega=\Omega(c,d,t)\in \poi$ with the following property. Let $G$ be a $t$-clean graph. Assume that there exists a $2\Omega$-polypath in $G$ which is both $\Omega$-fancy and $d$-loose. Then $G$ contains a $c$-hassle.
\end{theorem}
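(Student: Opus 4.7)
The plan is to use the $\Omega$-fancy, $d$-loose $2\Omega$-polypath $\mathcal{W}$ to construct a large strong block whose vertex set lies inside $V(\mathcal{W})$, apply Theorem~\ref{thm:noshort} to deduce that the block must contain long internally disjoint paths, and combine this with the $d$-loose property to extract a $c$-hassle whose walks are pairwise disjoint subpaths of a single polypath path. Write $\mathcal{W} = \mathcal{A} \cup \mathcal{B}$ with $|\mathcal{A}| = |\mathcal{B}| = \Omega$, where $\mathcal{A}$ is the fancy subfamily, so every $A \in \mathcal{A}$ has a neighbor on every $B \in \mathcal{B}$. The constant $\Omega$ will be taken large enough to dominate a composition of several iterated quantities: Ramsey-type bounds from Lemma~\ref{lem:coolramsey}, the function $\eta(\cdot, t)$ from Theorem~\ref{thm:noshort}, $\upsilon(\cdot, \cdot)$ from Theorem~\ref{thm:short}, and polynomial factors in $c$ and $d$.

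First, we build a strong $k$-block $B^*$, for $k$ large in terms of $c, d, t$, whose vertex set is contained in $V(\mathcal{A}) \subseteq V(\mathcal{W})$. Fix $k$ rows $A_1, \dots, A_k \in \mathcal{A}$ and $m \gg k^2$ columns $B_1, \dots, B_m \in \mathcal{B}$. For each pair $(i, j)$, the fancy property furnishes an edge $e_{ij} = a_{ij} b_{ij}$ with $a_{ij} \in A_i$ and $b_{ij} \in B_j$; by $d$-looseness each vertex of $A_i$ can serve as endpoint of at most $d - 1$ such edges, so the $a_{ij}$'s (and similarly the $b_{ij}$'s) occupy at least $m/(d-1)$ distinct positions along their respective paths. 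Choose a block vertex $v_i \in A_i$ for each $i$ and route paths between $v_i$ and $v_{i'}$ through distinct columns $B_j$, following $A_i$ from $v_i$ to $a_{ij}$, then $e_{ij}$, then $B_j$ from $b_{ij}$ to $b_{i'j}$, then $e_{i'j}$, then $A_{i'}$ from $a_{i'j}$ to $v_{i'}$. A Ramsey-type reduction patterned on Lemma~\ref{lem:coolramsey}, applied across all pairs $\{i, i'\}$ simultaneously, ensures that the routing paths for distinct block pairs do not share internal vertices, yielding the desired strong $k$-block $B^*$ with all internal vertices drawn from $V(\mathcal{W})$.

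Next, we apply Theorem~\ref{thm:noshort}: if $D$ is chosen much larger than $cd$ and $k > \eta(D, t)$, the block $B^*$ cannot be $D$-short, so some pair $\{u, v\} \in \binom{B^*}{2}$ is joined by an internally disjoint path of length exceeding $D$. Iterating (passing to a strong sub-block via Theorem~\ref{thm:short} as necessary), we obtain a block vertex $v$ incident in $G$ to many neighbors in $V(\mathcal{W})$, namely the starting vertices of its $k - 1$ families of $k$ internally disjoint block paths to the other block vertices, which are all distinct by the strong-block property. The $d$-loose property then confines these neighbors, outside the row $A_i$ through $v$, to fewer than $d$ other polypath paths, so a pigeonhole argument forces a single path $P \in \mathcal{W}$ to contain at least $2c^2 + c$ neighbors of $v$.

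Finally, the long block paths passing through $P$ guarantee that $P$ itself is long enough to support the spacing required for a $c$-hassle. We select $c$ neighbors of $v$ on $P$ at pairwise $P$-distance at least $2c + 1$ by a greedy argument (the $2c^2 + c$ neighbors exceed the worst-case $(c-1)(2c+1)$ bound needed to extract a well-separated subset), split $P$ at midpoints between consecutive chosen neighbors into $c$ pairwise disjoint subpaths $W_1, \dots, W_c$, and note that each $W_i$ contains exactly one chosen neighbor of $v$ with at least $c$ further vertices of $P$ on either side. Since each $W_i$ is itself a path, it is automatically $c$-stretched, and so $v$ together with $W_1, \dots, W_c$ forms the desired $c$-hassle. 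The chief obstacle is the block construction of the first step: converting the ``$K_{k, m}$-minor-like'' structure of the fancy polypath into a genuine strong $k$-block with vertex set inside $V(\mathcal{W})$ demands a coordinated choice of the edge endpoints $a_{ij}, b_{ij}$ coupled with the Ramsey thinning, so that routing paths for distinct block pairs do not collide; a secondary obstacle is ensuring that $v$'s neighbors along $P$ are sufficiently spread, for which the long block paths provided by Theorem~\ref{thm:noshort} are essential.
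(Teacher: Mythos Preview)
Your argument has two gaps, one minor and one serious.

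The minor one is in your block construction. For pairs $\{v_i,v_{i'}\}$ and $\{v_i,v_{i''}\}$ routed through distinct columns $B_j,B_{j'}$, the two routes both traverse segments of the common row $A_i$ starting at $v_i$, and these segments are nested; no Ramsey thinning can separate nested intervals of a path. The paper sidesteps this entirely: since $\mathcal{W}$ is $\Omega$-fancy, $H=G[V(\mathcal{W})]$ has a $K_{\Omega,\Omega}$-minor, hence treewidth at least $\Omega$, and Theorem~\ref{thm:tw7block} supplies a strong block in $H$ directly.

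The serious gap is in your final step. Granting the block and pigeonholing onto a single $P\in\mathcal{W}$, you correctly observe that a greedy scan of $N\geq 2c^2+c$ distinct neighbours of $v$ on $P$ finds $c$ of them at pairwise $P$-distance at least $2c+1$. But a $c$-hassle requires the neck to be anticomplete to the \emph{first and last $c$ vertices} of every walk, and the unchosen neighbours of $v$ on $P$ may sit exactly there. Nothing forbids $v$ from being adjacent to a long consecutive interval of $P$, or even to all of $P$: the resulting fan has treewidth $2$ and is $t$-clean for every $t\geq 3$. In that scenario every subpath of $P$ has all of its first $c$ vertices adjacent to $v$, and your construction produces no $c$-hassle. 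Your invocation of ``long block paths'' does not help, since those paths are long in $G$, not along $P$, and may leave $P$ immediately after their first step.

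The paper resolves this by not taking the walks to be subpaths of $P$ alone. Lemma~\ref{lem:bananinpoly} uses the $d$-loose hypothesis to find a path of $\mathcal{W}$ that is an $x$-slash for many of the block paths between a fixed non-adjacent pair $x,y$; then Lemma~\ref{lem:slash} assembles each hassle-walk from a middle segment of the slash flanked on each end by a segment of a block path. Because each block path from $x$ is induced, $x$ has a unique neighbour on it, and that neighbour is arranged to lie in the middle segment, so the flanking segments supply the required $c$ non-neighbours at each end. If this fails for every pair in a large stable subset of the block, Lemma~\ref{lem:slash} instead returns many short $x$--$y$ paths, yielding a $(c+1)$-short block and contradicting Theorem~\ref{thm:noshort}.
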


We need to prepare for the proof of Theorem~\ref{thm:tasselinpoly}. Let $G$ be a graph, let $x,y\in V(G)$ be distinct and non-adjacent, and let $\mathcal{P}$ be a collection of pairwise internally disjoint paths in $G$ from $x$ to $y$. An \textit{$x$-slash for $\mathcal{P}$ in $G$} is a path $W$ in $G\setminus \{x,y\}$ such that for every $P\in \mathcal{P}$, the unique neighbor of $x$ in $P$ belongs to $W$. Our first lemma says that:

\begin{lemma}\label{lem:slashnoshort}
Let $c,p,q\in \poi$. Let $G$ be a graph, let $x,y\in V(G)$ be distinct and non-adjacent, and let $\mathcal{P}_0$ be a collection of $c(p+1)q$ pairwise internally disjoint paths in $G$ from $x$ to $y$. Let $W_0$ be an $x$-slash for $\mathcal{P}_0$ in $G$. Then one of the following holds.
  \begin{enumerate}[\rm (a)]
    
        \item\label{lem:slashnoshort_a} There exists $\mathcal{P}\subseteq \mathcal{P}_0$ with $|\mathcal{P}|=p$ and a path $W\subseteq W_0$, such that:
        \begin{itemize}
            \item $W$ is an $x$-slash for $\mathcal{P}$ in $G$; and
            \item there is no path $Q$ of length at most $c+1$ in $V(\mathcal{P})\cup W$ from $x$ to $y$.
        \end{itemize}
    \item\label{lem:slashnoshort_b} There exists a collection $\mathcal{Q}$ of $q$ pairwise internally disjoint paths in $G$ from $x$ to $y$, each of length at most $c+1$.
    \end{enumerate}
  \end{lemma}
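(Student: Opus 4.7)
The plan is to greedily pull short $xy$-paths out of $V(\mathcal{P}_0)\cup W_0$ and then use whatever is left of $\mathcal{P}_0$ and $W_0$ to build $\mathcal{P}$ and $W$. Concretely, I would choose $\mathcal{Q}$ to be a \emph{maximal} collection of pairwise internally disjoint $xy$-paths of length at most $c+1$ whose vertex set lies in $V(\mathcal{P}_0)\cup W_0$. If $|\mathcal{Q}|\ge q$, then any $q$ of its members witness outcome~(b), so from now on I would assume $|\mathcal{Q}|\le q-1$.

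Let $D$ be the union of the interiors of the paths in $\mathcal{Q}$; each member has at most $c$ internal vertices, so $|D|\le (q-1)c$. Call $P\in\mathcal{P}_0$ \emph{damaged} if its interior meets $D$. Internal disjointness of $\mathcal{P}_0$ means each vertex of $D$ lies in the interior of at most one such $P$, so at most $(q-1)c$ paths of $\mathcal{P}_0$ are damaged, leaving at least $c(p+1)q-(q-1)c=cpq+c$ undamaged. The key observation is that for every undamaged $P$ the vertex $x_P$ is an internal vertex of $P$ (as $x$ and $y$ are non-adjacent) and lies on $W_0$, hence lies in $W_0\setminus D$.

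Now $W_0\setminus D$ is a disjoint union of at most $(q-1)c+1$ subpaths of $W_0$. Pigeonhole, combined with the elementary bound $cpq+c>p((q-1)c+1)$ (which reduces to $c(p+1)>p$), supplies a single subpath $W\subseteq W_0$ that contains $x_P$ for at least $p+1$ undamaged $P$'s. Picking any $p$ of these as $\mathcal{P}$ gives $|\mathcal{P}|=p$, with $W$ a subpath of $W_0$ that serves as an $x$-slash for $\mathcal{P}$.

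It remains to verify that no $xy$-path of length at most $c+1$ sits inside $V(\mathcal{P})\cup W$. If one did, the maximality of $\mathcal{Q}$ would force it to share an internal vertex $v$ with some $Q_0\in\mathcal{Q}$, so $v\in D\setminus\{x,y\}$. Since $v$ is also internal to the hypothetical short path, $v\in V(\mathcal{P})\cup W$: the option $v\in W$ is ruled out by $W\subseteq W_0\setminus D$, while $v\in V(\mathcal{P})\setminus\{x,y\}$ would place $v$ in the interior of some $P\in\mathcal{P}$ and thereby damage $P$, contradicting $P\in\mathcal{P}$. The main subtlety is disentangling the two kinds of ``damage'' a single short path inflicts---on some $P\in\mathcal{P}_0$ through an internal hit, and on $W_0$ through a vertex deletion---and fitting both into the slack built into the count $|\mathcal{P}_0|=c(p+1)q$.
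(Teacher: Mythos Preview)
Your argument is correct and follows essentially the same route as the paper: take a maximal collection of short internally disjoint $xy$-paths, delete their interiors, and use pigeonhole on the surviving components of $W_0$ together with the undamaged members of $\mathcal{P}_0$. The only cosmetic differences are that the paper takes the maximal collection in all of $G$ rather than in $V(\mathcal{P}_0)\cup W_0$, and uses the slightly looser bounds $|\mathcal{C}^*|<cq$ and ``at most $cq$ components'' in place of your $(q-1)c$ and $(q-1)c+1$; neither affects the argument.
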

  \begin{proof}
Let $\mathcal{C}$ be a maximal set of pairwise internally disjoint paths in $G$ from $x$ to $y$, each of length at most $c+1$. In particular, for every $Q\in \mathcal{C}$, $Q^*$ is a path in $G$ on at most $c$ vertices.

Note that if $|\mathcal{C}|\geq q$, then \ref{lem:slashnoshort}\ref{lem:slashnoshort_b} holds, as required. Thus, we may assume that $|\mathcal{C}|<q$, and so $|V(\mathcal{C}^*)|<cq$.  In particular, $W_0\setminus \mathcal{C}^*$ has at most $cq$ components, and since the paths in $\mathcal{P}_0$ are pairwise internally disjoint, it follows that there are at most $|V(\mathcal{C}^*)|<cq$ paths $P$ in $\mathcal{P}_0$ for which $P\cap \mathcal{C}^*\neq \emptyset$. This, along with the assumption that $|\mathcal{P}_0|=c(p+1)q$ and $W_0$ is an $x$-slash for $\mathcal{P}_0$ in $G$, implies that there exist $\mathcal{P}\subseteq \mathcal{P}_0$ with $|\mathcal{P}|=p$ and $V(\mathcal{P})\cap \mathcal{C}^*=\emptyset$, as well as a component $W$ of $W_0\setminus \mathcal{C}^*\subseteq G\setminus \{x,y\}$, such that for every $P\in \mathcal{P}$, the unique neighbor of $x$ in $P$ belongs to $W$. In other words, $W$ is an $x$-slash for $\mathcal{P}$. Moreover, we have $(V(\mathcal{P})\cup W)\cap \mathcal{C}^*=\emptyset$, and so by the maximality of $\mathcal{C}$, there is no path $Q$ of length at most $c+1$ in $V(\mathcal{P})\cup W$ from $x$ to $y$. Now $\mathcal{P}$ and $W$ satisfy \ref{lem:slashnoshort}\ref{lem:slashnoshort_a}, as desired.
\end{proof}

  The next lemma is the main step  in the proof of Theorem~\ref{thm:tasselinpoly}.

\begin{lemma}\label{lem:slash}
  For all $c,q,t\in \poi$,  there is a constant $\mu=\mu(c,q,t)\in \poi$ with the following property. Let $G$ be a graph, let $x,y\in V(G)$ be distinct and non-adjacent, and let $\mathcal{P}_0$ be a collection of $\mu$ pairwise internally disjoint paths in $G$ from $x$ to $y$. Assume that there is an $x$-slash for $\mathcal{P}_0$ in $G$. Then one of the following holds.
  \begin{enumerate}[\rm (a)]
        \item\label{lem:slash_a} $G$ contains $K_{t+1}$ or $K_{t,t}$.
        \item\label{lem:slash_b} $G$ contains a $c$-hassle.
        \item\label{lem:slash_c} There is a collection $\mathcal{Q}$ of $q$ pairwise internally disjoint paths in $G$ from $x$ to $y$, each of length at most $c+1$.
    \end{enumerate}
  \end{lemma}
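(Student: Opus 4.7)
The plan is to first apply Lemma~\ref{lem:slashnoshort} to reduce to a setting in which no short $x$-$y$ path can be built out of the paths and slash, then apply a product Ramsey argument to normalize the local structure around the $x$-neighbors, and finally case-split into a ``dense'' outcome (yielding $K_{t+1}$ or $K_{t,t}$) and a ``sparse'' outcome (yielding a $c$-hassle).

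I would set $\mu=c(p+1)q$ with $p=p(c,q,t)$ large in terms of the Ramsey constants below, and first apply Lemma~\ref{lem:slashnoshort}: either its outcome (b) holds and we are done with our outcome (c), or we obtain $\mathcal{P}=\{P_1,\ldots,P_p\}$ and an $x$-slash $W$ for $\mathcal{P}$ such that $V(\mathcal{P})\cup W$ contains no $x$-$y$ path of length $\leq c+1$. In particular every $P_j$ has length at least $c+2$. Let $x_j$ be the unique neighbor of $x$ on $P_j$ (so $x_j\in W$), relabel so that $x_1,\ldots,x_p$ appear in this order along $W$, and for each $j$ set $U_j$ to be the (at most $3c+1$)-vertex set consisting of $x_j$, the next $c$ vertices of $P_j$ after $x_j$, and up to $c$ vertices of $W$ on each side of $x_j$.

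Next, I would color each pair $(j,j')$ by the isomorphism type of the bipartite graph between $U_j$ and $U_{j'}$ in $G$, together with the indicator of $N_G(x)$ on $U_j$ and $U_{j'}$, and apply Theorem~\ref{productramsey} to extract a uniform sub-collection $\mathcal{P}'$ of cardinality at least $t^t+2c$. I would then case-split on the uniform pattern: if many $j$'s share common $x$-neighbors in their $U_j$'s, or if the $U_j$-$U_{j'}$ edges form a dense bipartite subgraph across the sub-collection, then Theorem~\ref{classicalramsey} applied to the induced common-neighbor graph produces $K_{t+1}$ or $K_{t,t}$ in $G$, yielding outcome (a). Otherwise, the pattern is sparse enough that the construction below goes through.

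In the sparse case, I would select $c$ indices $j_1<\cdots<j_c$ in $\mathcal{P}'$ that are well-spaced along $W$, and build a walk $\Xi_i$ for each $i$ by concatenating a length-$c$ subpath of $W$ ending at $x_{j_i}$ (on the side away from the other $j_{i'}$'s) with the length-$c$ subpath of $P_{j_i}^*$ starting at $x_{j_i}$ and heading toward $y$. The walks are pairwise disjoint (the $W$-intervals are chosen disjoint and the $P_{j_i}^*$'s are internally disjoint) and $x_{j_i}$ is an interior vertex of $\Xi_i$; with $x$ as the neck this yields a $c$-hassle, i.e.\ outcome (b). The hard part will be verifying that each $\Xi_i$ is $c$-stretched: the only potential chords lie between the $W$-part and the $P_{j_i}$-part of $\Xi_i$ within a $c$-window of $x_{j_i}$, and these must be forbidden because the uniform sparsity of the Ramsey pattern, if it included such a chord, would propagate across all of $\mathcal{P}'$ and force outcome (a) via a density argument. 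Managing this chord-freeness, and cooking $\mu$ to absorb the various Ramsey blow-ups, is the technical heart of the proof.
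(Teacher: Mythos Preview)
Your reduction via Lemma~\ref{lem:slashnoshort} matches the paper's first step, but the walk construction you propose has two genuine gaps, and the paper's construction is different in exactly the places needed to close them.

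First, in your walk $\Xi_i$ the first $c$ vertices lie on $W$. For a $c$-hassle the neck $x$ must be anticomplete to those vertices, but $W$ is an $x$-slash: it contains every $x_j$ (all neighbours of $x$), and may contain further neighbours of $x$ besides. Nothing in your Ramsey colouring on pairs $(j,j')$ controls which $x_{j'}$ fall into the length-$c$ interval of $W$ next to $x_{j_i}$; even if you add $N_G(x)\cap U_j$ to the colour, uniformity only tells you that the same pattern recurs, not that it is empty, and a recurring neighbour of $x$ on $W$ does not produce $K_{t+1}$ or $K_{t,t}$. The paper sidesteps this by building each walk as a three-piece object --- a segment of some $P_{u_{1,i}}^*$, then a segment of $W$, then a segment of some $P_{u_{2,i}}^*$ --- so that both the first and last $c$ vertices lie in the interior of a path of $\mathcal{P}$ heading toward $y$; the ``no $x$--$y$ path of length at most $c+1$ in $V(\mathcal{P})\cup W$'' conclusion of Lemma~\ref{lem:slashnoshort} is then precisely what forces $x$ to be anticomplete there.

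Second, your treatment of $c$-stretchedness does not go through as stated. A chord between the $W$-part and the $P_{j_i}$-part of $\Xi_i$ is an edge \emph{internal} to $U_{j_i}$; the colour you put on a pair $(j,j')$ records only the bipartite pattern between $U_j$ and $U_{j'}$, so uniformity says nothing about such chords. Even if the induced type of $G[U_j]$ is added to the colour, the conclusion ``every $U_j$ has the same internal chord'' is one edge per index and gives no density across indices, hence neither $K_{t+1}$ nor $K_{t,t}$. The paper instead applies Lemma~\ref{lem:coolramsey} to pit a fixed $c$-vertex segment $A_{v_i}\subseteq W$ against two large families $\{P_u^*:u\in U_{1,i}\}$ and $\{P_u^*:u\in U_{2,i}\}$: either some choice of $u_{1,i},u_{2,i}$ makes $A_{v_i}$ anticomplete to both $P_{u_{j,i}}^*$ (and then the three-piece walk is the concatenation of two genuine induced paths overlapping on $A_{v_i}$, hence $c$-stretched), or a single vertex of $A_{v_i}$ has a neighbour in \emph{every} $P_u^*$ of one of the families, giving a large cluster. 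That cluster is then pushed through Lemma~\ref{lem:meagercluster} and Theorem~\ref{thm:tasselincluster} to produce a $c$-hassle directly --- so the ``dense'' branch actually lands in outcome~(b), not outcome~(a).
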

  \begin{proof}
   Let $b=b(c,t)=t^t(2c^2+(2ct^t)^{t^t})$. Let $\Upsilon(\cdot,\cdot,\cdot)$
be as in Lemma~\ref{lem:coolramsey}, and let
$$\Upsilon_1=\Upsilon(2c+2,1,c);$$
$$m=c\Upsilon_1;$$
$$\Upsilon_2=\Upsilon(c,b,3);$$
be as in Lemma~\ref{lem:coolramsey}. We will show that 
$$\mu=\mu(c,q,t)=c((c+2)m\Upsilon_2 +1)q$$
satisfies the lemma. Let $G$ be a graph, let $x,y\in V(G)$ be distinct and non-adjacent, let $\mathcal{P}_0$ be a collection of $\mu$ pairwise internally disjoint paths in $G$ from $x$ to $y$, and let $W_0$ be an $x$-slash for $\mathcal{P}_0$ in $G$.

Assume that none of the three outcomes of \ref{lem:slash} hold. We apply Lemma~\ref{lem:slashnoshort} to $x,y,\mathcal{P}_0$ and $W_0$. In particular, since \ref{lem:slash}\ref{lem:slash_c} is equivalent to  Lemma~\ref{lem:slashnoshort}\ref{lem:slashnoshort_b}, it follows from the choice of $\mu$ that:

\sta{\label{st:getP&W}There exists $\mathcal{P}\subseteq \mathcal{P}_0$ with $|\mathcal{P}|=(c+2)m\Upsilon_2$ and a path $W\subseteq W_0$ such that:
 \begin{itemize}
            \item $W$ is an $x$-slash for $\mathcal{P}$ in $G$; and
            \item there is no path $Q$ of length at most $c+1$ in $V(\mathcal{P})\cup W$ from $x$ to $y$.
        \end{itemize}}

From now on, let $\mathcal{P}$ and $W$ be as in \eqref{st:getP&W}. For every vertex $v\in N_{V(\mathcal{P})}(x)$, we denote by $P_v$ the unique path in $\mathcal{P}$ for which $v$ is the unique neighbor of $x$ in $P_v$. Since $W$ is an $x$-slash for $\mathcal{P}$, it follows that $N_{V(\mathcal{P})}(x)\subseteq W$. Let $w_1$ and $w_2$ be the ends of $W$. Since $|\mathcal{P}|=cm\Upsilon_2+2m\Upsilon_2$, it follows that one may choose $3m$ pairwise disjoint $\Upsilon_2$-subsets $\{U_{1,i},V_i,U_{2,i}:i\in \poi_{m}\}$ of $N_{V(\mathcal{P})}(x)\subseteq W$, such that the following hold.

\begin{itemize}
\item For each $i\in \poi_{m}$, there are $\Upsilon_2$ pairwise disjoint paths $\{A_v:v\in V_i\}$ in $W$, each on $c$ vertices, such that for every $v\in V_i$:
    \begin{itemize}
    \item $A_v$ contains $v$; and
    \item Traversing $W$ from $w_1$ to $w_2$, every vertex in $U_{1,i}$ appears before every vertex in $A_v$, and every vertex in $A_v$ appears before every vertex in $U_{2,i}$.
\end{itemize}
In particular, traversing $W$ from $w_1$ to $w_2$,  every vertex in $U_{1,i}$ appears before every vertex in $V_i$, and every vertex in $V_i$ appears before every vertex in $U_{2,i}$ (see Figure~\ref{fig:segmentpath}).

    \item  For every $i\in \poi_{m-1}$, traversing $W$ from $w_1$ to $w_2$, every vertex in $U_{2,i}$ appears before every vertex in $U_{1,i+1}$ (see Figure~\ref{fig:segmentpath2}).
\end{itemize}

\begin{figure}[t!]
\centering
\includegraphics[scale=0.6]{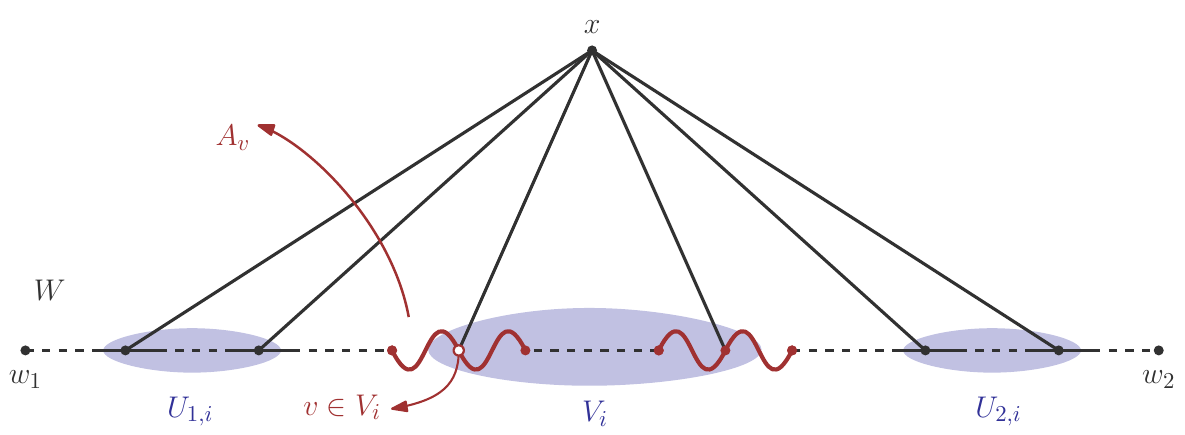}
\caption{The subsets $U_{1,i},V_i,U_{2,i}$ of $N_{V(\mathcal{P})}(x)\subseteq W$ and the paths $\{A_v:v\in V_i\}$.}
\label{fig:segmentpath}
\end{figure}

\begin{figure}[t!]
\centering
\includegraphics[scale=0.6]{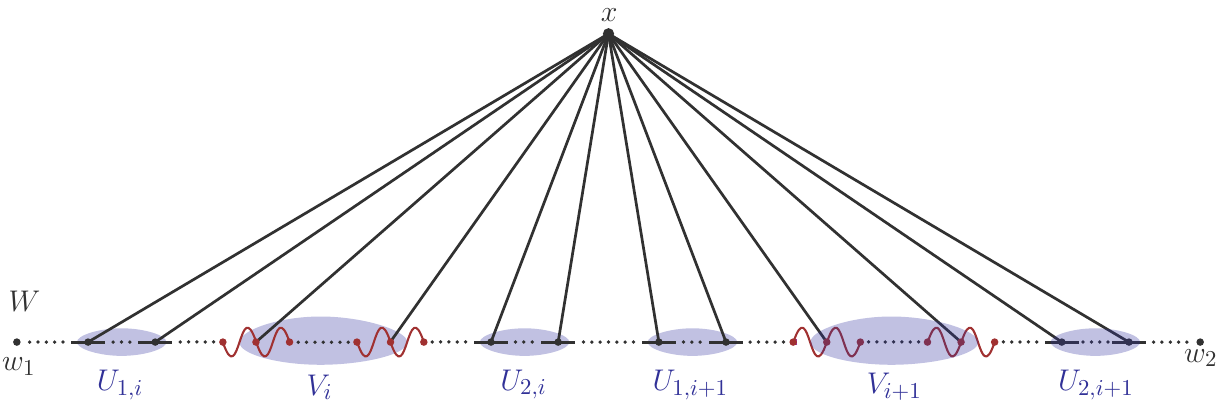}\caption{The subsets $U_{1,i},V_i,U_{2,i}, U_{1,i+1},V_{i+1},U_{2,i+1}$ of $N_{V(\mca{P})}(x)\subseteq W$.}
\label{fig:segmentpath2}
\end{figure}

We deduce that:

\sta{\label{st:disjointantislash} For every $i\in \poi_{m}$, there exist $u_{1,i}\in U_{1,i}$, $v_i\in V_i$ and $u_{2,i}\in U_{2,i}$, for which $A_{v_i}$ is disjoint from $P_{u_{1,i}}\cup P_{u_{2,i}}$ and anticomplete to $(P_{u_{1,i}}\cup P_{u_{2,i}})\setminus \{x\}$.}

To see this, let
$$\mathcal{A}_i=\{(A_v,\emptyset):v\in V_i\};$$  $$\mathcal{B}_{1,i}=\{(\emptyset,P_u^*):u\in U_{1,i}\};$$ 
$$\mathcal{B}_{2,i}=\{(\emptyset,P_u^*):u\in U_{2,i}\}.$$
Then $\mathcal{A}_i,\mathcal{B}_{1,i},\mathcal{B}_{2,i}$ are three collections of pairwise disjoint $c$-pairs over $V(G)$, each of cardinality $\Upsilon_2$. By the choice of $\Upsilon_2$, we can apply Lemma~\ref{lem:coolramsey} to $\mathcal{A}_i,\mathcal{B}_{1,i},\mathcal{B}_{2,i}$. It follows that there exist $U'_{1,i}\subseteq U_{1,i}$, $V'_i\subseteq V_i$ and $U'_{2,i}\subseteq U_{2,i}$ with $|U'_{1,i}|=|V'_i|=|U'_{2,i}|=b$ such that the collections
$\mathcal{A}'_i=\{(A_v,\emptyset):v\in V'_i\}$, $\mathcal{B}'_{1,i}=\{(\emptyset,P_u^*):u\in U'_{1,i}\}$ and 
$\mathcal{B}'_{2,i}=\{(\emptyset,P_u^*):u\in U'_{2,i}\}$ satisfy Lemma~\ref{lem:coolramsey}\ref{lem:coolramsey_a} and \ref{lem:coolramsey}\ref{lem:coolramsey_b}. In fact,  Lemma~\ref{lem:coolramsey}\ref{lem:coolramsey_a} along with the assumption that $x,y\notin W$, implies that for every $u_{1}\in U'_{1,i}$, every $v\in V'_i$ and every $u_{2}\in U'_{2,i}$, we have $A_{v}\cap (P_{u_{1}}\cup P_{u_{2}})=\emptyset$. It remains to show that there exist $u_{1,i}\in U'_{1,i}$, $v_i\in V'_i$ and $u_{2,i}\in U'_{2,i}$, for which $A_{v_i}$ is anticomplete to $(P_{u_{1,i}}\cup P_{u_{2,i}})\setminus \{x\}$. Suppose not. Note that for every $v\in V'_i$, since $v\in A_v$ is a neighbor of $x$, it follows from the second bullet of \eqref{st:getP&W} that $y$ is anticomplete to $A_v$. Consequently, by Lemma~\ref{lem:coolramsey}\ref{lem:coolramsey_b}, there exists $j\in \{1,2\}$ such that for every $v\in V_{i}$, there exists a vertex $x_v\in A_v$ which has a neighbor in $P^*_u$ for every $u\in U'_{j,i}$. On the other hand, since $b\geq 2ct^t$, it follows that there exists $V\subseteq V'_i$ with $|V|=2ct^t$. Now, let $S=\{x_v:v\in V\}$ and let $\mathcal{L}=\{P_u^*:u\in U'_{j,i}\}$. Then $(S,\mathcal{L})$ is a $(2ct^t,b)$-cluster in $G$. This, combined with the choice of $b$, Lemma~\ref{lem:meagercluster}, and the assumption that $G$ is $(K_{t+1},K_{t,t})$-free, implies that there exists a $t^t$-meager $(2ct^t,2c^2t^t)$-cluster in $G$. But then by Theorem~\ref{thm:tasselincluster}, $G$ contains a $c$-hassle. This violates the assumption that \ref{lem:slash}\ref{lem:slash_b} does not hold, and so proves \eqref{st:disjointantislash}.

\medskip

Henceforth, for each $i\in \poi_{m}$, let $u_{1,i}\in U_{1,i}$, $v_i\in V_i$ and $u_{2,i}\in U_{2,i}$ be as in \eqref{st:disjointantislash}. We write $A_i=A_{v_i}$, $P_{1,i}=P_{u_{1,i}}$ and $P_{2,i}=P_{u_{2,i}}$. Also, we denote by $a_{1,i},a_{2,i}$ the ends of $A_{i}$, such that $W$ traverses the vertices $w_1,a_{1,i},a_{2,i},w_2$ in this order. It follows that $W$ traverses the vertices $w_1,u_{1,i},a_{1,i},a_{2,i},u_{2,i},w_2$ in this order, and $a_{1,i},a_{2,i}$ are the only vertices among $u_{1,i},a_{1,i},a_{2,i},u_{2,i}$ which can be the same (only if $c=1$).

Let $i\in \poi_{m}$ be fixed. For each $j\in \{1,2\}$, traversing $a_{j,i}\dd W\dd u_{j,i}$ starting at $a_{j,i}$, let $w'_{j,i}$ be the first vertex in $a_{j,i}\dd W\dd u_{j,i}$ with a neighbor in $P_{j,i}\setminus \{x\}$ (note that $w'_{j,i}$ exists because $u_{j,i}$ has a neighbor in $P_{j,i}\setminus \{x\}$). From \eqref{st:disjointantislash},  we know that $A_i$ is disjoint from and anticomplete to $(P_{1,i}\cup P_{2,i})\setminus \{x\}$; in particular, for every $j\in \{1,2\}$, $u_{j, i} \in P_{j, i} \setminus \{x\}$ has a neighbor in the interior of $a_{j,i}\dd W\dd u_{j, i}$. It follows that for every $j\in \{1,2\}$, the vertex $w'_{j,i}$ belongs to the interior of $a_{j,i}\dd W\dd u_{j,i}$, and there is a path $R_{j,i}$ in $G$ from $w'_{j,i}$ to $y$ whose interior is contained in $P_{j,i}^*$.

For every $i\in \poi_{m}$ and each $j\in \{1,2\}$, let $R'_{j,i}$ be the longest path of length at most $c+1$ in $R_{j,i}\setminus \{y\}$ containing $w'_{j,i}$. It follows that:

\sta{\label{st:pathininterior} For every $i\in \poi_{m}$ and each $j\in \{1,2\}$, we have $R'_{j,i}\setminus \{w'_{j,i}\}\subseteq P^*_{j,i}$. Consequently, the sets $\{R'_{1,i}\cup R'_{2,i}:i\in \poi_{m}\}$ are pairwise disjoint in $G$.}

Observe that by the choice of $R'_{j,i}$, either $R'_{j,i}=R_{j,i}\setminus \{y\}$, in which case \eqref{st:pathininterior} follows immediately from the fact that $R_{j,i}^*\subseteq P_{j,i}^*$, or $R'_{j,i}$ is a path on $c+2$ vertices with $R'_{j,i}\setminus \{w'_{j,i}\}\subseteq P^*_{j,i}$. This proves \eqref{st:pathininterior}.

\medskip

Now, for each $i\in \poi_{m}$, let $W'_i=w'_{1,i}\dd W\dd w'_{2,i}$, and let $W_i=G[R'_{1,i}\cup W'_i\cup R'_{2,i}]$ be the walk such that traversing $W_i$ from $\varphi_{W_i}(1)$ to $\varphi_{W_i}(n_{W_i})$, we first traverse the path $R'_{1,i}$ starting at the end distinct from $w'_{1,i}$ and stopping at $w'_{1,i}$, then we traverse the path $W'_i$ from $w'_{1,i}$ to $w'_{2,i}$, and then we traverse $R'_{2,i}$ starting at $w'_{2,i}$ (so we have $n_{W_i}=|R'_{1,i}|+|W'_i|+|R'_{2,i}|-2$). In particular, we have $W_i\subseteq V(\mathcal{P}^*)\cup W$. We claim that:

\sta{\label{st:Wi's} The following hold.
\begin{itemize}
    \item For all $i\in \poi_{m}$, the walk $W_i$ is $c$-stretched and $x$ has a neighbor in $W_i$.
    \item For all $i\in \poi_{m}$, the vertex $x$ has no neighbors among the first and last $c$ vertices of $W_i$.
    \item The paths $\{W'_i: i\in \poi_{m}\}$ are pairwise disjoint.
    \item The sets $\{W_i\setminus W'_i: i\in \poi_{m}\}$ are pairwise disjoint.
\end{itemize}}
The first bullet is immediate from $|A_i|=c$ and the observation that both $R_{1,i}'\dd w'_{1,i}\dd W\dd a_{2,i}$ and $R'_{2,i}\dd w'_{1,i}\dd W\dd a_{1,i}$ are paths in $G$ containing $A_i$. Also, the third bullet is trivial, and the fourth is immediate from \eqref{st:pathininterior}. It remains to prove the second bullet. Note that by the definition of $R'_{1,i}$ and $R'_{2,i}$, either $y$ has a neighbor among the first or the last $c$ vertices of $W_i$, or the first $c+1$ vertices of $W_i$ are contained in $P^*_{1,i}$, and the last $c+1$ vertices of $W_i$ are contained in $P^*_{2,i}$. In the former case, the result follows directly from the second bullet of \eqref{st:getP&W}, and in the latter case, the result follows from  the fact that $x$ has exactly one neighbor in $P^*_{1,i}$ and exactly one neighbor in $P^*_{2,i}$. This proves \eqref{st:Wi's}.

\medskip

We can now finish the proof. For every $k\in \poi_{c}$, let 
$$I_k=\{i\in \poi_{m}: i= k \textrm{ (mod } c)\};$$
$$\mathcal{B}_k=\{(W_i\setminus W'_i, W'_i): i\in I_k\}.$$
From the choice of $m$ and the third bullet of \eqref{st:Wi's}, it follows that $\mathcal{B}_1,\ldots, \mathcal{B}_c$ are collections of pairwise disjoint $(2c+2)$-pairs over $V(G)$, each of cardinality $\Upsilon_1$. The choice of $\Upsilon_1$ in turn allows for an application of Lemma~\ref{lem:coolramsey} to $\mathcal{B}_1,\ldots, \mathcal{B}_c$. In particular,  Lemma~\ref{lem:coolramsey}~\ref{lem:coolramsey_a} implies that for every $k\in \poi_{c}$, there exists $i_k\in I_k$ such that  $W_{i_k}\setminus W'_{i_k}$ and $ W_{i_l}$ are disjoint for all distinct $k,l\in \poi_{c}$. This, combined with the third and the fourth bullet of \eqref{st:Wi's}, implies that $W_{i_1},\ldots,W_{i_c}$ are pairwise disjoint. But now by the first two bullets of \eqref{st:Wi's}, the subgraph of $G$ induced on $\{x\}\cup W_{i_1}\cup \cdots \cup W_{i_c}$ is a $c$-hassle with neck $x$ and walks $W_{i_1},\ldots,W_{i_c}$, contradicting the assumption that \ref{lem:slash}\ref{lem:slash_b} does not hold. This completes the proof of Lemma~\ref{lem:slash}.
\end{proof}

From Lemma~\ref{lem:slash}, we deduce the following:
  
  \begin{lemma}\label{lem:bananinpoly}
  For all $c,d,q,t\in \poi$,  there is a constant $\zeta=\zeta(c,d,q,t)\in \poi$ with the following property. Let $G$ be a graph and let $\mathcal{W}$ be a $d$-loose polypath in $G$. Let $x,y\in V(\mathcal{W})$ be distinct and non-adjacent, and let $\mathcal{P}$ be a collection of $\zeta$ pairwise internally disjoint paths in $G$ from $x$ to $y$ with $V(\mathcal{P})\subseteq V(\mathcal{W})$. Then one of the following holds.
  \begin{enumerate}[\rm (a)]
        \item\label{lem:bananinpoly_a} $G$ contains $K_{t+1}$ or $K_{t,t}$.         \item\label{lem:bananinpoly_b} $G$ contains a $c$-hassle.
        \item\label{lem:bananinpoly_c} There exists a collection $\mathcal{Q}$ of $q$ pairwise internally disjoint paths in $G$ from $x$ to $y$, each of length at most $c+1$.
    \end{enumerate}
  \end{lemma}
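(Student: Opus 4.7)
The plan is to reduce the lemma to Lemma~\ref{lem:slash} by extracting an $x$-slash for a large subcollection of $\mathcal{P}$ from inside $V(\mathcal{W})$. Accordingly, I would set $\zeta = \zeta(c,d,q,t) = (d+2)\,\mu(c,q,t)$, where $\mu$ is the constant produced by Lemma~\ref{lem:slash}.

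For each $P \in \mathcal{P}$, let $v_P$ denote the unique neighbor of $x$ on $P$. Since $x$ and $y$ are non-adjacent, every path in $\mathcal{P}$ has length at least two, so $v_P$ is an internal vertex of $P$; thus the $v_P$'s are pairwise distinct and lie in $V(\mathcal{W}) \setminus \{x,y\}$. Let $W_x, W_y \in \mathcal{W}$ be the unique paths of $\mathcal{W}$ containing $x$ and $y$, respectively (they exist and are unique because $\mathcal{W}$ is a polypath). The first key observation is that since $\mathcal{W}$ is $d$-loose, $x$ has neighbors in at most $d-1$ paths of $\mathcal{W} \setminus \{W_x\}$. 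Hence $\{v_P : P \in \mathcal{P}\}$ is contained in $W_x \cup V(\mathcal{W}^*)$ for some $\mathcal{W}^* \subseteq \mathcal{W} \setminus \{W_x\}$ with $|\mathcal{W}^*| \leq d-1$.

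The second step is to cut these paths open at $x$ and $y$ in order to expose the $v_P$'s inside genuine paths of $G \setminus \{x,y\}$. Removing $x$ from $W_x$ (and additionally $y$, if $W_y = W_x$) splits $W_x$ into at most three subpaths disjoint from $\{x,y\}$; if $W_y \neq W_x$ and $W_y \in \mathcal{W}^*$, then removing $y$ from $W_y$ splits $W_y$ into at most two subpaths disjoint from $\{x,y\}$; every remaining path in $\mathcal{W}^*$ already avoids both $x$ and $y$, since it is distinct from $W_x$ and $W_y$. A short case analysis on whether $W_x = W_y$ shows that in every case this procedure yields at most $d+2$ paths in $G \setminus \{x,y\}$ whose union contains every $v_P$.

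By the pigeonhole principle, at least $\zeta/(d+2) = \mu(c,q,t)$ of the $v_P$'s lie on a single such subpath $W^*$. Letting $\mathcal{P}' = \{P \in \mathcal{P} : v_P \in W^*\}$, the set $\mathcal{P}'$ is a collection of at least $\mu(c,q,t)$ pairwise internally disjoint paths from $x$ to $y$ in $G$, and $W^*$ is by construction an $x$-slash for $\mathcal{P}'$. Applying Lemma~\ref{lem:slash} to $x$, $y$, $\mathcal{P}'$ and $W^*$ yields one of its three outcomes, which match outcomes \ref{lem:bananinpoly_a}, \ref{lem:bananinpoly_b}, \ref{lem:bananinpoly_c} of Lemma~\ref{lem:bananinpoly} verbatim. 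The only delicate point in the whole argument is the slicing step, where one has to be careful to handle simultaneously the two cases $W_x = W_y$ and $W_x \neq W_y$; everything else is a direct pigeonhole followed by an invocation of the previous lemma.
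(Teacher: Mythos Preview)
Your proof is correct and follows essentially the same approach as the paper: use $d$-looseness to confine the neighbors $v_P$ of $x$ to boundedly many paths of $\mathcal{W}$, remove $x$ and $y$ to get paths in $G\setminus\{x,y\}$, pigeonhole to put $\mu$ of the $v_P$'s on one such path, and invoke Lemma~\ref{lem:slash}. The only cosmetic difference is that the paper first discards $W_x$ by observing that $x$ has at most two neighbors on its own path (so at most two $v_P$'s land there), and thus only needs to split one path (by removing $y$) rather than slicing $W_x$ as well; this yields the constant $2d\mu$ in place of your $(d+2)\mu$, but the argument is the same.
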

  \begin{proof}
Let $\mu=\mu(c,q,t)\in \poi$
be as in Lemma~\ref{lem:slash}. We show that 
$$\zeta(c,d,q,t)=2d\mu$$
satisfies the lemma. Let $W\in \mathcal{W}$ such that $x\in W$. Then $x$ has at most two neighbors in $W$. Also, since $\mathcal{W}$ is $d$-loose, it follows that $x$ has neighbors in at most $d-1$ paths in $\mathcal{W}\setminus \{W\}$. This, along with the fact that $|\mathcal{P}|=\zeta\geq 2(d-1)\mu+2$, implies that there exists $\mathcal{P}_1\subseteq \mathcal{P}$ with $|\mathcal{P}_1|=2\mu$ and a path $W_1\in \mathcal{W}\setminus \{W\}$, such that for every $P\in \mathcal{P}_1$, the unique neighbor of $x$ in $P$ belongs to $W_1$. In particular, we have $x\notin W_1$ because $W,W_1\in \mathcal{W}$ are distinct (hence disjoint) and $x\in W$. Since $W_1\setminus \{y\}$ has one or two components (depending on whether $y\in W^*_1$ or not), it follows that there exist $\mathcal{P}_0\subseteq \mathcal{P}_1$ with $|\mathcal{P}_0|=\mu$ as well as a component $W_0$ of $W_1\setminus \{y\}$, such that for every $P\in \mathcal{P}_0$, the unique neighbor of $x$ in $P$ belongs to $W_0$. Therefore, $\mathcal{P}_0$ is a collection of $\mu$ pairwise internally disjoint paths in $G$ from $x$ to $y$, and $W_0$ is an $x$-slash for $\mathcal{P}_0$ in $G$. Now the result follows from Lemma~\ref{lem:slash} applied to $\mathcal{P}_0$.
\end{proof}

  We can now restate and prove Theorem~\ref{thm:tasselinpoly}:
  
\setcounter{theorem}{2}
\begin{theorem}\label{thm:tasselinpoly}
 For all $c,d,t\in \poi$,  there is a constant $\Omega=\Omega(c,d,t)\in \poi$ with the following property. Let $G$ be a $t$-clean graph. Assume that there exists a $2\Omega$-polypath in $G$ which is both $\Omega$-fancy and $d$-loose. Then there is a $c$-hassle in $G$.
\end{theorem}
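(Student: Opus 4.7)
The strategy is to use the polypath structure to manufacture a block inside $V(\mathcal{W})$, and then feed every pair of block vertices into Lemma~\ref{lem:bananinpoly}. Either some pair yields the desired $c$-hassle, or every pair yields many short paths, in which case the block is itself short and Theorem~\ref{thm:noshort} supplies a contradiction.

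The first step is to pass to $G' = G[V(\mathcal{W})]$, which remains $t$-clean. Since $\mathcal{W}$ is $\Omega$-fancy, contracting each path $W \in \mathcal{W}$ to a single vertex realises $K_{\Omega,\Omega}$ as a minor of $G'$, with the two parts corresponding to $\mathcal{W}'$ and $\mathcal{W}\setminus\mathcal{W}'$, so that $\tw(G') \geq \Omega$. Choosing $\Omega > \kappa(k,t)$ with $\kappa$ as in Theorem~\ref{thm:tw7block} then delivers a strong $k$-block $B$ in $G'$, and by construction all of its witnessing path collections live inside $V(\mathcal{W})$.

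The next step addresses the subtlety that Lemma~\ref{lem:bananinpoly} requires its endpoints to be distinct and non-adjacent. Since $G[B]$ is $K_{t+1}$-free (as $G$ is), Theorem~\ref{classicalramsey} extracts a stable subset $B' \subseteq B$ of a prescribed size $\eta_0$, so that every pair in $B'$ is automatically non-adjacent while still enjoying $k$ pairwise internally disjoint paths in $G'$ inherited from the block. Invoking Lemma~\ref{lem:bananinpoly} at each such pair, with the short-paths parameter set to $\eta_0$, then yields one of: $K_{t+1}$ or $K_{t,t}$ in $G$ (impossible since $G$ is $t$-clean), a $c$-hassle in $G$ (done), or $\eta_0$ pairwise internally disjoint paths from $x$ to $y$ of length at most $c+1$.

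If the $c$-hassle outcome ever occurs, we are done. Otherwise, every pair in $B'$ receives $\eta_0$ short witness paths, and using these collections simultaneously exhibits $B'$ as a $(c+1)$-short $\eta_0$-block in $G$. Choosing $\eta_0 = \eta(c+1,t)$ with $\eta$ as in Theorem~\ref{thm:noshort} then contradicts that theorem, so the $c$-hassle alternative must arise. Concretely, I would set $\eta_0 = \eta(c+1,t)$, then $k = \max\{\zeta(c,d,\eta_0,t),\, \eta_0^{t}\}$ (the $t$-th power accommodating the Ramsey step), and finally $\Omega = \kappa(k,t)+1$. There is no genuine obstacle beyond bookkeeping this parameter chain and verifying that the block's paths remain inside $V(\mathcal{W})$; both become transparent once the entire argument is carried out inside $G'$.
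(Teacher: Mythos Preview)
Your proposal is correct and follows essentially the same route as the paper: restrict to $G'=G[V(\mathcal{W})]$, use the $K_{\Omega,\Omega}$-minor to force large treewidth, extract a strong block via Theorem~\ref{thm:tw7block}, pass to a stable subset by Theorem~\ref{classicalramsey}, apply Lemma~\ref{lem:bananinpoly} to every pair, and if no $c$-hassle appears conclude that the stable set is a $(c+1)$-short $\eta$-block, contradicting Theorem~\ref{thm:noshort}. Your parameter chain $\eta_0=\eta(c+1,t)$, $k=\max\{\zeta(c,d,\eta_0,t),\eta_0^{t}\}$, $\Omega=\kappa(k,t)+1$ matches the paper's up to the Ramsey exponent (the paper writes $t^{\eta}$ where $\eta^{t}$ fits Theorem~\ref{classicalramsey} as stated), and your remark that working inside $G'$ keeps all witness paths in $V(\mathcal{W})$ is exactly the point needed for Lemma~\ref{lem:bananinpoly}.
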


  \begin{proof}
Let $\eta=\eta(c+1,t)$ be as in Theorem~\ref{thm:noshort}. Let $\zeta=\zeta(c,d,\eta,t)$ be as in Lemma~\ref{lem:bananinpoly}. We define
$$\Omega=\Omega(c,d,t)=\kappa(\max\{t^{\eta},\zeta\},t)+1,$$
where $\kappa(\cdot,\cdot)$ is as in Theorem~\ref{thm:tw7block}. Let $G$ be a $t$-clean graph and let $\mathcal{W}$ be a $2\Omega$-polypath in $G$ which is both $\Omega$-fancy and $d$-loose. Suppose for a contradiction that $G$ does not contain a $c$-hassle.

Let $H=G[V(\mathcal{W})]$. Then $H$ is a $t$-clean graph which has a $K_{\Omega,\Omega}$-minor. It follows that $H$ has treewidth at least $\Omega$, which along with Theorem~\ref{thm:tw7block} implies that there is a strong $\max\{t^{\eta},\zeta\}$-block in $G$. In particular, we may choose a $t^{\eta}$-subset $B$ of $V(H)$ such that for every $2$-subset $\{x,y\}$ of $B$, there exists a collection $\mathcal{P}_{\{x,y\}}$ of $\zeta$ pairwise internally disjoint paths in $H$ from $x$ to $y$. Since $|B|=t^{\eta}$ and since $G$ is $K_{t+1}$-free, it follows from Theorem~\ref{classicalramsey} that there exists a stable set $A\subseteq B$ in $H$ of cardinality $\eta$.

Now, fix a $2$-subset $\{x,y\}$ of $A$. Then $x,y$ are non-adjacent in $G$, and so by the choice of $\zeta$,  we can apply Lemma~\ref{lem:bananinpoly} to $\mathcal{W}$ and $\mathcal{P}_{\{x,y\}}$. Note that Lemma~\ref{lem:bananinpoly}\ref{lem:bananinpoly_a} violates the assumption that $G$ is $t$-clean, and Lemma~\ref{lem:bananinpoly}\ref{lem:bananinpoly_b} violates the assumption that $G$ contains no $c$-hassle. Thus, Lemma~\ref{lem:bananinpoly}\ref{lem:bananinpoly_c} holds, that is, there exists a collection $\mathcal{Q}_{\{x,y\}}$ of $\eta$ pairwise internally disjoint paths in $G$ from $x$ to $y$, each of length at most $c+1$. But now $A$ is a $(c+1)$-short (not necessarily strong) $\eta$-block in $G$. Combined with the choice of $\eta$ and Theorem~\ref{thm:noshort}, this violates the assumption that $G$ is $t$-clean, hence completing the proof of Theorem~\ref{thm:tasselinpoly}.
\end{proof}

Combining Theorem~\ref{thm:grid}, \ref{thm:tasselincluster} and \ref{thm:tasselinpoly}, we deduce Theorem~\ref{mainloaded}, restated below.

\setcounter{theorem}{0}
\begin{theorem}\label{mainloaded}
For all $c,t\in \poi$,  there is a constant $\Gamma=\Gamma(c,t)$ such that every $t$-clean graph $G$ of treewidth more than $\Gamma$ contains a $c$-hassle.
\end{theorem}
\begin{proof}
    Let  $l=2c^2t^t$ and let $s=2ct^t$. Let $\Omega=\Omega(c,l+t^ts^{t^t},t)$ be as in Theorem~\ref{thm:tasselinpoly}. Let
$$\Gamma=\Gamma(c,t)=\gamma(c,l,s,t,\Omega);$$
where $\gamma(\cdot,\cdot,\cdot,\cdot,\cdot)$ is as in Theorem~\ref{thm:grid}. Let $G$ be a $t$-clean graph of treewidth more than $\Gamma$. It follows from Theorem~\ref{thm:grid} that either there exists a $t^t$-meager $(s,l)$-cluster in $G$, or there is $2\Omega$-polypath in $G$ which is both $\Omega$-fancy and $(l+t^ts^{t^t})$-loose. In the former case, by Theorem~\ref{thm:tasselincluster}, $G$ contains a $c$-hassle. Also, in the latter case, the choice of $\Omega$ along with Theorem~\ref{thm:tasselinpoly} implies that $G$ contains a $c$-hassle. This completes the proof of Theorem~\ref{mainloaded}.
\end{proof}
 
\section{Part assembly}\label{sec:end}
Finally, let us bring everything together and prove Theorem~\ref{mainiftassel}. 
First, from Theorem~\ref{mainloaded}, we deduce that:
\begin{theorem}\label{mainif}
    Let $\mathcal{H}$ be a finite set of graphs which is hassled. Then the class of all $\mathcal{H}$-free graphs is clean.
\end{theorem}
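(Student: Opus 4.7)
The plan is to combine Theorem~\ref{mainloaded} with the hassled property of $\mathcal{H}$, bridging the subtle gap between ``each component of some $H \in \mathcal{H}$ is contained in the given hassle'' (which is what hassled provides) and ``$H$ itself is an induced subgraph of $G$'' (which is what is needed to contradict $\mathcal{H}$-freeness). The resolution is to assemble the components of $H$ into a single induced copy by placing each component in a different hassle, chosen to be pairwise disjoint and pairwise anticomplete to the others in $G$.

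Fix $t \in \poi$ and set $c = c(\mathcal{H}, t)$ from the hassled definition, along with $M = |\mathcal{H}| \cdot ||\mathcal{H}||$. The first step is to show that every $t$-clean graph of sufficiently large treewidth contains $M$ pairwise disjoint and pairwise anticomplete $c$-hassles. We iteratively apply Theorem~\ref{mainloaded} to $G$ and to the residual graphs obtained by deleting previously found hassles, truncating each extracted hassle to a $c$-hassle of bounded size (polynomial in $c$) by replacing every walk with a short subwalk centered at a neighbor of the neck, while preserving the $c$-stretched property and the condition that the neck has no neighbor among the first or last $c$ vertices of each walk. Since $G$ is $(K_{t+1}, K_{t,t})$-free (being $t$-clean), Lemma~\ref{ramsey2} then refines the resulting family of pairwise disjoint small hassles into $M$ pairwise anticomplete ones $T^{(1)}, \dots, T^{(M)}$.

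The second step is comparatively routine. Applying the hassled property to each $T^{(i)}$ yields some $H_i \in \mathcal{H}$ all of whose components are induced subgraphs of $T^{(i)}$, and by pigeonhole over the $|\mathcal{H}|$ possible choices a single $H^* \in \mathcal{H}$ arises for at least $||\mathcal{H}||$ of the hassles. If $K_1, \dots, K_p$ are the components of $H^*$, where $p \leq ||\mathcal{H}||$, then we assign each $K_j$ to a distinct hassle $T^{(j)}$ among those yielding $H^*$, and invoke hassled again to embed $K_j$ as an induced subgraph of $T^{(j)}$. Since the $T^{(j)}$ are pairwise disjoint and pairwise anticomplete in $G$, the union of these $p$ embeddings is an induced copy of $K_1 \sqcup \cdots \sqcup K_p = H^*$ in $G$, contradicting $\mathcal{H}$-freeness. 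The main obstacle is the bounded-size truncation in the first step: when the neck has many neighbors clustered within distance $c$ in a single walk, a naive truncation to a subwalk of length $2c+1$ may leave forbidden neck-neighbors at the new endpoints, so one must either isolate a sparse neck-neighbor (possible in sufficiently long walks via a pigeonhole on neighbor spacing) or take a slightly longer truncation that encompasses entire clusters; either way, the resulting bound is polynomial in $c$ and suffices for the iteration.
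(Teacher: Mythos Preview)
Your overall strategy is sound and matches the paper's architecture: iterate Theorem~\ref{mainloaded} to collect many disjoint objects, use Lemma~\ref{ramsey2} to make them pairwise anticomplete, pigeonhole to a single $H\in\mathcal{H}$, and assemble the components of $H$ across distinct anticomplete pieces. The second step of your proposal is correct as written.

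The gap is in the truncation. Your claim that a $c$-hassle can always be trimmed to one of size polynomial in $c$ is false. Take a walk $W$ that is simply a path $p_1\dd\cdots\dd p_n$ with $\varphi_W$ the identity, and let the neck $x$ be adjacent to every $p_i$ with $c<i\le n-c$. This is a legitimate walk of a $c$-hassle, and the resulting graph is $(K_{t+1},K_{t,t})$-free for every $t\geq 3$ (it is a fan, of treewidth two). Any subwalk on indices $\{a,\ldots,b\}$ that still contains a neighbour of $x$ must have $b>c$, and then its last $c$ indices meet $\{c+1,\ldots,n-c\}$ unless $b=n$; symmetrically $a=1$. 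So no proper subwalk satisfies the $c$-hassle conditions, and neither of your two proposed fixes applies: there is no ``sparse'' neck-neighbour to isolate, and the single ``cluster'' is the entire middle of the walk, of unbounded length.

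The paper sidesteps this by not truncating the hassle at all. Instead it applies the hassled property immediately to the hassle $\Xi$ found inside $G$, extracting a set $W\subseteq V(\Xi)$ of size at most $||\mathcal{H}||$ such that $G[W]$ already contains every component of some $H\in\mathcal{H}$. It is these small witness sets $W_1,\ldots,W_m$, not the hassles, that are accumulated by iteration (deleting each $W_i$ lowers the treewidth by at most $||\mathcal{H}||$), then fed to Lemma~\ref{ramsey2}, and finally used to assemble $H$. In other words, swap the order of ``apply the hassled property'' and ``make disjoint/anticomplete'' in your first step, and your argument goes through without any truncation.
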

\begin{proof}
    Let $\mathcal{H}$ be a finite set of graphs which is hassled. In order to show that the class of all $\mathcal{H}$-free graphs is clean, it is enough to prove, for every $t\in \poi$, that  there is a constant $\ell=\ell(t,\mathcal{H})\in \poi$ such that every $t$-clean graph of treewidth more that $\ell$ contains some graph $H\in \mathcal{H}$.

    We begin with setting the value of $\ell$. Since $\mathcal{H}$ is hassled, it follows that for every $t\in \poi$,  there is a constant $c=c(t,\mathcal{H})\in \poi$ such that every $(K_{t+1},K_{t,t})$-free $c$-hassle contains all components of some graph in $\mathcal{H}$. Let $\Gamma=\Gamma(c,t)$ be as in Theorem~\ref{mainloaded}, and let $\Delta=\Delta(||\mathcal{H}||, ||\mathcal{H}||,t)$ be as in Lemma~\ref{ramsey2}. Define 
$$\ell=\ell(t,\mathcal{H})=||\mathcal{H}||^2\Delta+\Gamma.$$
Let $G$ be a $t$-clean graph of treewidth more than $\ell$. Since $\ell\geq \Gamma$, by Theorem~\ref{mainloaded}, there is an induced subgraph $\Xi$ of $G$ which is a $c$-hassle. This, combined with the assumption that $\mathcal{H}$ is hassled and $G$ is $(K_{t+1},K_{t,t})$-free, implies that there exists $W\subseteq V(\Xi)\subseteq V(G)$ with $|W|\leq ||\mathcal{H}||$ such that $G[W]$ contains all components of some graph in  $\mathcal{H}$.

Let $m\in \poi$ be maximum such that there are $m$ pairwise disjoint subsets $W_1,\ldots, W_m$ of $V(G)$, each of cardinality at most $||\mathcal{H}||$, such that for every $i\in \poi_{m}$ there exists $H \in \mathcal{H}$ such that
 $W_i$  contains every component of $H$. 
We claim that:

\sta{\label{st:manycopies} $m\geq ||\mathcal{H}||\Delta$.}

Suppose not. Let $G'=G\setminus (W_1\cup \cdots\cup W_m)$. Then we have $\tw(G')\geq \tw(G)-m||\mathcal{H}||>\tw(G)-||\mathcal{H}||^2\Delta>\Gamma$. Thus, by Theorem~\ref{mainloaded}, there is an induced subgraph $\Xi'$ of $G'$ which is a $c$-hassle. Since $\mathcal{H}$ is hassled and $G$ is $(K_t,K_{t,t})$-free, it follows that there exists $W'\subseteq V(\Xi')\subseteq V(G')$ with $|W'|\leq ||\mathcal{H}||$  and $H \in \mathcal{H}$ such that $W'$ contains every
component of $H$. But this violates the maximality of $m$, and so proves \eqref{st:manycopies}.

\medskip

By \eqref{st:manycopies}, we can choose pairwise disjoint subsets $W_1,\ldots, W_{||\mathcal{H}||\Delta}$ of $V(G)$, each of cardinality at most $||\mathcal{H}||$, such that for every $i\in \poi_{||\mathcal{H}||\Delta}$, there exists
$H_i \in \mathcal{H}$ such that $W_i$ contains every component of $H_i$.
Since $|\mathcal{H}|\leq ||\mathcal{H}||$, it follows that there exists a graph $H\in \mathcal{H}$, as well as  $I\subseteq \poi_{||\mathcal{H}||\Delta}$ with $|I|=\Delta$, such that for every $i\in I$, we have $H_i=H$. Also, since $G$ is $(K_t,K_{t,t})$-free, from the choice of $\Delta$, it follows that there exist $i_1,\ldots, i_{||\mathcal{H}||}\in I$ such that $W_{i_1},\ldots, W_{i_{||\mathcal{H}||}}$ are pairwise anticomplete in $G$.

Let $K_1,\ldots, K_h$ be the components of $H$. Then $h\leq ||\mathcal{H}||$, and for every $j\in \poi_{h}$, there exists $X_j\subseteq W_{i_j}$ such that $G[X_j]$ is isomorphic to $K_j$. Now $G[X_1\cup \cdots \cup X_h]$ is isomorphic to $H$, as desired.
\end{proof}

Now Theorem~\ref{mainiftassel} is immediate from Theorems~\ref{thm:tassellediffhassled} and \ref{mainif}.

\section{Connection to unavoidable binary strings}

We conclude the paper with a brief discussion around explicit constructions of tasselled families. There is one particularly nice example of a tasselled family, described in the following corollary of Theorem~\ref{thm:onlyif} (we omit the proof as it is straightforward).

\begin{corollary}\label{cor:nicetasslled}
    Given $a, b\in \poi$, let $\mathcal{H}_{a, b}$ be the set of all
 strands $F$ such that the path of $F$ has vertices $v_1, \dots, v_{a+b+1}$ in this order, and the neck $x$ of $F$ is non-adjacent to $v_1, \dots, v_a$ and adjacent to $v_{a+1}$. (Thus, $\mathcal{H}_{a, b}$ contains $2^b$ distinct graphs, corresponding to all possible ways for $x$ to be adjacent to $v_{a+2}, \dots, v_{a+b+1}$.) Then $\mathcal{H}_{a, b}$ is tasselled.
\end{corollary}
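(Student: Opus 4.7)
\medskip

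\noindent\textbf{Proof proposal for Corollary~\ref{cor:nicetasslled}.}
The plan is to exhibit a very small constant $c=c(\mathcal{H}_{a,b})$ that witnesses the tasselled property; specifically I would take $c=\max\{a,b\}$. Fix any $c$-tassel $T$ with neck $x$, and let $P$ be any of its paths, say $P=p_1\dd p_2\dd\cdots\dd p_n$. Since $P$ comes from a $c$-strand, the vertex $x$ has at least one neighbor on $P$, and no neighbors among $\{p_1,\dots,p_c\}$ or $\{p_{n-c+1},\dots,p_n\}$; in particular $n\ge 2c+1$. Let $p_j$ be the neighbor of $x$ on $P$ of smallest index. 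Then $c+1\le j\le n-c$.

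Now I would simply cut out the window $v_i:=p_{j-a-1+i}$ for $i=1,\dots,a+b+1$, setting $v_{a+1}=p_j$. The index bounds $j-a\ge c+1-a\ge 1$ (since $c\ge a$) and $j+b\le n-c+b\le n$ (since $c\ge b$) ensure that this window is a set of $a+b+1$ consecutive vertices of the path $P$. Since $P$ is an induced path in $T$ and the paths of $T$ are pairwise anticomplete except through $x$, the induced subgraph $T[\{x,v_1,\dots,v_{a+b+1}\}]$ is again a strand whose path is $v_1\dd\cdots\dd v_{a+b+1}$; moreover, $x$ is non-adjacent to $v_1,\dots,v_a$ by the minimality of $j$, and $x$ is adjacent to $v_{a+1}=p_j$ by the choice of $j$. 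The adjacencies of $x$ with $v_{a+2},\dots,v_{a+b+1}$ can be arbitrary, but since $\mathcal{H}_{a,b}$ contains all $2^b$ strands realising any such pattern, this induced subgraph is isomorphic to some $H\in\mathcal{H}_{a,b}$. Each $H\in\mathcal{H}_{a,b}$ is connected, so ``$T$ contains every component of $H$'' is equivalent to ``$T$ contains $H$,'' and this is what we have just verified.

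There is no real obstacle here: the statement is a matter of inspecting the structure of a tassel and using that a $c$-strand forces a long ``non-neighbor buffer'' of length $c$ at each end of its path. The only thing to keep in mind is choosing $c$ large enough that, simultaneously, one has at least $a$ non-neighbors of $x$ preceding the first neighbor (which needs $c\ge a$) and at least $b$ further vertices of $P$ after it (which needs $c\ge b$); the value $c=\max\{a,b\}$ is exactly what balances these two requirements. Hence $\mathcal{H}_{a,b}$ is tasselled, as required.
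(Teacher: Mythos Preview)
Your proof is correct. The paper itself omits the proof, saying only that the corollary is ``straightforward'' and presenting it as a consequence of Theorem~\ref{thm:onlyif}; so the intended route was presumably to show that every sufficiently large $n$-array contains a member of $\mathcal{H}_{a,b}$ (choose a middle vertex $x_j$ with $a+1\le j\le n-b$, take the first neighbor of $x_j$ on some $L_i$, and read off the window around it), and then invoke Theorem~\ref{thm:onlyif}. Your argument instead verifies the tasselled property directly from the definition, bypassing arrays and Theorem~\ref{thm:onlyif} entirely. This is slightly more elementary and gives an explicit (and essentially optimal) constant $c=\max\{a,b\}$, whereas the array route filters the constant through the proof of Theorem~\ref{thm:onlyif}. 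Both approaches are short; yours is the more self-contained of the two.
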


While a full description of all tasselled families remains unknown, a promising approach is to attempt to model the ``tasselled'' property word-combinatorially, and then use results from the literature on formal languages in order to gain insight into the graph problem.

One way to view a strand $F$ is as a binary string denoting neighbors of the neck in the path. More explicitly, for a strand $F$ with neck $v$ and path $P$, if $p_1, \ldots, p_t$ are the vertices of $P$ in order, then we can define the binary string $S_{P,v}=b_1 \dots b_t$ where $b_i = 1$ if $p_i \in N(v)$, and $b_i = 0$ otherwise (note that the string $S_{P,v}$ is uniquely defined up to reversal). Moreover, every strand $F$ of a $c$-tassel with neck $v$ corresponds to a binary string (again, unique up to reversal) which starts and ends with at least $c$ zeroes, but which is not an all-zero string; let us say such a string is \emph{$c$-padded}. 

For a graph $K$, we say a vertex $v\in V(K)$ is \textit{a neck for $K$} if every component of $K\setminus \{v\}$ is a path, and we denote by $\mathcal{S}_{K,v}$ the set of all strings $S_{P,v}$ where $P$ is a component of $K\setminus \{v\}$. It is easy to see that: 
\begin{theorem}
    Let $\mathcal{H}$ be a set of graphs. Then $\mathcal{H}$ is tasselled if and only if there exists $c$ such that, for every $c$-padded string $S$, there is a graph $H\in \mathcal{H}$ with the following property. For every component $K$ of $H$, one may choose a neck $v$ of $K$ such that for every string $S'\in \mathcal{S}_{K,v}$, the string $S$ contains either $S'$ or its reverse as a consecutive substring. 
\end{theorem}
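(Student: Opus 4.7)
The argument hinges on a natural correspondence, modulo reversal, between $c$-padded binary strings and $c$-strands: a string $S$ of length $n$ corresponds to the strand $F_S$ whose path is $p_1\dd\cdots\dd p_n$ and whose neck is adjacent to $p_i$ precisely when the $i$-th bit of $S$ is $1$. Identifying the necks of $c$ disjoint copies of $F_S$ then produces the $c$-tassel $T_S$, and, conversely, every $c$-tassel arises in this way from some $c$-padded string. Under this translation, the containment of a component $K$ of $H$ inside a tassel corresponds to fitting the adjacency strings of its legs into $S$ as consecutive substrings, possibly reversed. I will assume $\mathcal{H}$ is finite so that $||\mathcal{H}||$ bounds the sizes of the components of graphs in $\mathcal{H}$.

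For the forward direction, let $c_0$ be the tasseledness constant and take $c=\max\{c_0,||\mathcal{H}||\}$. For a $c$-padded $S$, tasseledness applied to $T_S$ yields some $H\in\mathcal{H}$ whose components all lie inside $T_S$. For each component $K$ with embedding $\phi\colon K\hookrightarrow T_S$, there are two cases. If $\phi(K)$ meets the neck $x$ of $T_S$, set $v=\phi^{-1}(x)$: then $K\setminus\{v\}$ embeds into the pairwise disjoint strand-paths of $T_S$, so each of its components is a path (making $v$ a neck of $K$), and the string $S_{P,v}$ for each leg $P$ is read off $S$ along the appropriate strand, up to reversal depending on orientation. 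Otherwise $\phi(K)$ avoids $x$, so $K$ is itself a path of length at most $||\mathcal{H}||\leq c$; choose $v$ near the midpoint of $K$ so that every string in $\mathcal{S}_{K,v}$ has length at most $\lceil c/2\rceil\leq c$, and each such string then fits into the $c$ leading zeros (or, via reversal, the $c$ trailing zeros) of $S$ followed by a first (or last) $1$.

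For the backward direction, let $c_1$ be the string-condition constant and take $c=\max\{c_1,||\mathcal{H}||\}$. A $c$-tassel $T$ has an underlying $c$-strand with $c$-padded adjacency string $S$; the string condition supplies some $H\in\mathcal{H}$ such that every component $K$ has a neck $v$ with every element of $\mathcal{S}_{K,v}$ or its reverse appearing as a consecutive substring of $S$. I embed $K$ into $T$ by sending $v$ to the neck $x$ of $T$ and, for each leg $P$ of $K\setminus\{v\}$, selecting a consecutive substring of $S$ realizing $S_{P,v}$ (or its reverse) and embedding $P$ as the corresponding subpath of a distinct strand of $T$. Enough strands are available since $K\setminus\{v\}$ has at most $||\mathcal{H}||\leq c$ components and $T$ has at least $c$ strands; pairwise anticompleteness of strand-paths in $T\setminus\{x\}$ gives inducedness across different legs, while within each leg the embedding is a subpath inclusion with adjacency to $x$ correct by construction.

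The main obstacle I expect is the path-not-meeting-neck case of the forward direction: there the desired string condition does not come out mechanically from the embedding, so instead one must choose $v$ inside $K$ centrally and use the $c$-padded structure of $S$ to absorb the two half-strings into the zero blocks at the ends of $S$. This is exactly what forces $c$ to dominate $||\mathcal{H}||$, keeping halves of any component short enough to fit inside the zero padding.
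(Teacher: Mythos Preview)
The paper offers no proof here beyond ``It is easy to see that'', so there is nothing to compare your argument against; the string/strand dictionary you set up is precisely the intended translation, and under your added finiteness hypothesis both directions go through, including the one genuinely delicate point (a path component embedded away from the neck, which you correctly absorb into the zero-padding of $S$).

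The gap is the finiteness assumption itself. The theorem as stated places no such restriction on $\mathcal{H}$, yet you invoke $\lVert\mathcal{H}\rVert$ essentially in both directions: in the forward direction to bound $|K|$ so that the leg strings $10^{j}$ fit into the $0^c1$ block of $S$, and in the backward direction to bound the number of components of $K\setminus\{v\}$ by the number of strands available in $T$. Neither bound exists for infinite $\mathcal{H}$, and your embedding scheme (one leg per strand) genuinely breaks without it, since a component could have arbitrarily many legs whose string occurrences in $S$ all overlap. Whether the statement is even true in full generality is not addressed by the paper; given that this section is an informal coda and every result in the paper proper is for finite $\mathcal{H}$, the omission of ``finite'' in the hypothesis is most plausibly a slip in the statement rather than a claim you have failed to prove. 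Still, as written you have established only the finite case, and you should flag this explicitly rather than silently strengthening the hypothesis.
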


There is an interesting special case of this where every graph in $\mathcal{H}$ is a $c'$-tassel for some $c'\in \poi$. Note that in this case, every $H\in \mathcal{H}$ is connected and the set $\mathcal{S}_{H, v}$ has cardinality one for each neck $v$ of $H$. So the question of whether $\mathcal{H}$ is tasselled translates into:

\begin{question}
    For which sets $\mathcal{S}$ of strings is it true that there is a constant $c=c(\mathcal{S})\in \poi$ such that every $c$-padded string $S$ contains either a string in $\mathcal{S}$ or the reverse of a string in $\mathcal{S}$ as a consecutive substring? 
\end{question}

If the above holds for some fixed $c$, let us call such a set $\mathcal{S}$ a \emph{$c$-unavoidable language}. In view of our aim, it is of particular interest to identify the minimal $c$-unavoidable languages (with respect to the inclusion). In particular, the only sets of cardinality one which are $c$-unavoidable for some $c\in \poi$ are of the form $\mathcal{S} = \{0\cdots01\}$ (up to reversal) and $\mathcal{S} = \{0\cdots0\}$, and one may observe that this matches the outcome of Theorem~\ref{tw7}. However, in general, larger $c$-unavoidable languages seem harder to describe; for example, the following is $3$-unavoidable: 
$$\{00011, 0001000, 1010, 010010, 111, 110011, 11011, 110010011, 00010011001000\}.$$ On the other hand, for a finite set $\mathcal{S}$ of strings, it is finitely testable if $\mathcal{S}$ is $c$-unavoidable for some $c\in \poi$. To see this, note that if there is a $c$-padded string $S$ which does not contain any string in $\mathcal{S}$ nor the reversal of a string in $\mathcal{S}$, then, assuming $s$ to be the length of the longest string in $\mathcal{S}$, one may choose $S$ such that $c\leq s$ and $S$ has length at most $(s+2)2^s$ (which follows by noticing that a repetition of a consecutive substring of length $s$ in $S$ not containing the first or last $c$ bits can be used to shorten $S$). 

The following, closely related question, has been studied before: 

\begin{question}
 For which sets $\mathcal{S}$ of strings is it true that every sufficiently long string contains a string in $\mathcal{S}$ as a consecutive substring? 
\end{question}

These sets $\mathcal{S}$ are called \emph{unavoidable languages} (see, for example \cite{rosaz1995unavoidable, rosaz1998inventories}). In particular, in  \cite{bucher1985total, choffrut1984extendibility} it is shown that every minimal unavoidable language is finite. This is not the case for $c$-unavoidable languages; for example, $\mathcal{S} = \{01^k0 : k\in \poi\}$ is $1$-unavoidable. However, if we omit the string $01^k0$ for some $k\in \poi$, then $0\dots 01^k0 \dots 0$ avoids $\mathcal{S}$. It would be interesting to consider which results on unavoidable languages generalize to $c$-unavoidable languages.

\bibliographystyle{amsplain}


\begin{aicauthors}
\begin{authorinfo}[balec]
  Bogdan Alecu\\
  School of Computing, University of Leeds\\
  Leeds, UK
\end{authorinfo}
\begin{authorinfo}[mchud]
 Maria Chudnovsky\\
 Princeton University\\
 Princeton, New Jersey, USA
\end{authorinfo}
\begin{authorinfo}[laci]
Sepehr Hajebi\\
Department of Combinatorics and Optimization, University of Waterloo\\
Waterloo, Ontario, Canada
\end{authorinfo}
\begin{authorinfo}[andy]
 Sophie Spirkl\\
Department of Combinatorics and Optimization, University of Waterloo\\
Waterloo, Ontario, Canada
\end{authorinfo}
\end{aicauthors}

\end{document}